\title{Well-posedness and qualitative properties of quasilinear degenerate evolution systems}
\author[1]{K. Mitra\footnote{email: \href{mailto:koondanibha.mitra@uhasselt.be}{koondanibha.mitra@uhasselt.be} }}
\author[2]{S. Sonner}
\affil[1]{Faculty of Science, Hasselt University, Hasselt, Belgium}
\affil[2]{Faculty of Science, Radboud University, Nijmegen, The Netherlands}
\numberwithin{equation}{section}
\DeclareMathSizes{\@xpt}{\@xpt}{6}{5}
\newcounter{def}
\newcounter{assume}
 \newcounter{EUassumption}
  \newcounter{EUXproperties}
\newtheorem{theorem}{Theorem}[section]
\newtheorem{lemma}{Lemma}[section]
\newtheorem{proposition}{Proposition}[section]
\newtheorem{corollary}{Corollary}[theorem]
\newtheorem{remark}{Remark}[section]
\newtheorem{definition}[def]{Definition}
\theoremstyle{definition}
\newtheorem{example}{Example}[section]
\newtheorem{assumption}{Assumption}[section]
\def \a  {\alpha}
\def \b  {\beta}
\def \g  {\gamma}
\def \G  {\Gamma}
\def \d  {\delta}
\def \D  {\Delta}
\def \e  {\varepsilon}
\def \eps {\epsilon}
\def \f  {\varphi}
\def \j {\ell}
\def \vr  {\varrho}
\def \Om {\Omega}
\def \t  {\tau}
\def \z  {\zeta}
\def \del {\nabla}
\def \Cf {\mathfrak{C}}
\def \p  {\partial}
\def \N  {{\mathbb{N}}}
\def \R  {{\mathbb{R}}}
\def \H  {{\cal H}^1}
\def \Hm  {{\cal H}^{-1}}
\def \W {{\bf {\cal W}}}
\def \X {{\bf {\cal X}}}
\def \Y {{\bf {\cal Y}}}
\def \Z {{\bf {\cal Z}}}
\def \calA {\mathfrak{A}}
\def \calB {\mathfrak{B}}
\def \calF {\mathfrak{F}}
\def \dd {\mathrm{d}}
\let\oldequation\equation
\let\oldendequation\endequation
\renewenvironment{equation}
  {\linenomathNonumbers\oldequation}
  {\oldendequation\endlinenomath}
  \let\oldalign\align
\let\oldendalign\endalign
\renewenvironment{align}
  {\linenomathNonumbers\oldalign}
  {\oldendalign\endlinenomath}
\begin{document}
\maketitle

\begin{abstract}
    We analyze nonlinear degenerate coupled PDE-PDE and PDE-ODE systems  that arise, for example, in the modelling of biofilm growth. One of the equations, describing the evolution of a biomass density, exhibits degenerate and singular diffusion. The other equations are either of advection-reaction-diffusion type or ordinary differential equations. Under very general assumptions the existence of weak solutions is proven by considering regularized  systems, deriving uniform bounds and using fixed point arguments. Assuming additional structural assumptions we also prove the uniqueness of solutions. 
    
    Global-in-time well-posedness is established for Dirichlet and mixed boundary conditions, whereas, only local well-posedness can be shown for homogeneous Neumann boundary conditions. Using a suitable barrier function and comparison theorems we formulate sufficient conditions for finite-time blow-up or uniform boundedness of solutions. Finally, we show that solutions of the degenerate parabolic equation inherit additional global spatial regularity if the diffusion coefficient has a power-law growth.
\end{abstract}

\textbf{Keywords:}  degenerate diffusion $\bullet$  biofilm models $\bullet$ quasilinear parabolic systems $\bullet$ PDE-ODE systems $\bullet$  well-posedness $\bullet$  regularity $\bullet$  finite time blow up  

\textbf{MSC:} 35K65, 35K59, 35A01, 35A02, 35B44, 35B45, 35B50

\tableofcontents

\section{Introduction}
This paper investigates the well-posedness and qualitative properties of weak solutions of a wide class of quasilinear parabolic systems where one of the equations shows degenerate and singular diffusion. 
We also consider couplings of such degenerate parabolic equations with ordinary differential equations (ODEs). 
The motivation for our work is models describing the growth of spatially heterogeneous biofilms in dependence of growth limiting substrates. The models are either formulated as systems of  partial differential equations (PDEs) or as coupled PDE-ODE systems, e.g. see \cite{eberl2001new,eberl2017spatially}.
Their characteristic and challenging features are the degenerate and singular diffusion effects in the equation for the biomass density and the nonlinear coupling of this equation to additional ODEs and/or PDEs for the substrates. 

Let $\Om\subset \R^d,\, d\in\N,$ be a bounded Lipschitz domain and $T>0$. 
We denote the parabolic cylinder by $Q:=\Om\times (0,T]$. Throughout this study, for a fixed $k\in \N$, $j\in \{1,\dots,k\}$ will denote an integer, and $\vec{w}=(w_1,\dots,w_k)$ a $k$-dimensional vector. We consider the following problem in $Q$,
\begin{subequations}\label{eq:main}
\begin{align} 
&\p_t M =\del\cdot[D_0(M)\del M] + f_0(M,\vec{S}),\label{eq:M}\\
&\p_t S_j= \nu_j \del\cdot[D_j(M, \vec{S})\del S_j + \bm{v}_j S_j] + f_j(M,\vec{S}),\label{eq:Sj}
\end{align} 
for $j=1,\dots,k$, where $M:Q\to\R$ denotes the biomass density and the vector-valued function $\vec{S}:Q\to \R^k$ the substrate concentrations. 
The biomass density $M$ is normalized with respect to the maximum biomass density and hence, it takes values in $[0,1)$.
The biomass diffusion coefficient $D_0:[0,1)\to [0,\infty)$ is degenerate, it satisfies $D_0(0)=0$ and $\lim_{m\nearrow 1} D_0(m)=\infty$. Although, we remark that large parts of our analysis are also valid for non-degenerate functions $D_0$.
The diffusion coefficients of the substrates $D_j : [0, 1] \times \R^k \to [0, \infty)$ are non-degenerate, i.e. they are bounded from above and below by positive constants. The constants $\nu_j\geq 0$ will be referred to as the mobility coefficients of the substrates. It is important to point out that the case of immobilized substrates ($\nu_j=0$) is included in our setting which leads to a coupling of Equation \eqref{eq:M} with ODEs in \eqref{eq:Sj}. Moreover, $\bm{v}_j:Q\to \R^d$ is a given flow-field. Finally, the reaction terms $f_0,\, f_j:\R^{k+1}\to \R$ describe the complex interplay between the substrates and biomass.

In biofilm modelling applications, it is important to allow for mixed Dirichlet-Neumann or homogeneous Neumann boundary conditions for $M$. To this end, we divide the boundary $\p\Om$ into two disjoint parts $\G_1$ and $\G_2$  that are both Lipschitz boundaries.
We complement \eqref{eq:M}--\eqref{eq:Sj} with the following initial and boundary conditions for $M$ and $\vec{S}$, 
\begin{align}\label{eq:boundary_initial_data}
M(0)&=M_0, \quad \vec{S}(0)=\vec{S}_0,\\
 M|_{\G_1}&=h_0, \quad [\del M\cdot\bm{\hat{n}}]|_{\G_2}=0,\quad \nu_j S_j|_{\p\Om}=\nu_jh_j,
\end{align}
\end{subequations}
where $\bm{\hat{n}}$ denotes the outward unit normal to $\p\Om$ and 
$M_0:\Om\to [0,1)$, $ \vec{S}_0:\Om\to \R^k$, $h_0:\Gamma_1\to [0,1)$ and $h_j:\p\Om\to \R$ are given. We remark that the case $\G_1=\emptyset$ is allowed in our setting which  corresponds to homogeneous Neumann boundary conditions for $M$. The case $\G_2=\emptyset$ is also included which corresponds to Dirichlet boundary conditions for $M$. Note that in \eqref{eq:boundary_initial_data} we do not prescribe boundary conditions for immobilized substrates $S_j$, i.e.  if $\nu_j=0$ for some $j\in\{1,\dots,k\}.$
To simplify the presentation of our results we assume Dirichlet boundary conditions for the substrates, but the analysis remains valid if we impose mixed boundary conditions for the substrates, see Remark \ref{remark:bcS}.  

In models for biofilm growth, the actual biofilm is described by the region where $M$ is positive, $$
\Omega^+(t)=\{x\in\Omega:M(t,x)>0\}.
$$ 
Due to the 
degeneracy of the biomass diffusion coefficient, $D_0(0)=0$, there is a sharp interface between the biofilm and the surrounding region, and the interface propagates at a finite speed. The additional singularity in the diffusion coefficient, $\lim_{m\nearrow 1} D_0(m)=\infty$, ensures that the biomass density does not exceed its maximum value, i.e. $M$ remains bounded by a constant strictly less than $1$. 

In \Cref{fig:biofilm} typical situations modelled by \eqref{eq:main} are sketched for biofilm colonies depending on a single substrate. In the left figure the substrate is dissolved in the spatial domain $\Omega$ and transported by diffusion and convection. The biofilm colony grows into the aqueous phase. In the right figure the substrate is immobilized and contained in the spatial domain $\Omega$. The bacteria consume and degrade the substrate, a biofilm front develops and propagates through the substratum.    

A system of the form \eqref{eq:main} with 
a single dissolved substrate $S=S_1$, i.e. $k=1$ and $\nu_1>0$, was first proposed in \cite{eberl2001new} to model biofilm growth in an aqueous medium. 
In this case,
\begin{subequations}\label{biofilm_model}
\begin{align}
D_0(M)&=d_2\frac{M^a}{(1-M)^b},& D_1(M,S)&=d_1,\\
f_0(M,S)&=k_3\frac{SM}{k_4+S}-k_2 M,& f_1(M,S)&= -k_1\frac{SM}{k_4+S},
\end{align}
\end{subequations}
for some constants $k_1,k_2,k_3,k_4,d_1,d_2>0$ and $a,b\geq 1$, and $\mathbf{v_1}$ is a given flow field.  
An ODE-PDE system of the form \eqref{eq:main} with a single substrate was used in \cite{eberl2017spatially} to model cellulolytic biofilms degrading an immobilized cellulose material. In this case, the functions $D_0,f_0$ and $f_1$ are as in \eqref{biofilm_model} and $D_1\equiv0$, $\mathbf{v_1}\equiv 0$.

\begin{figure}
    \begin{subfigure}[b]{0.49\textwidth}
         \centering
         \includegraphics[width=.99\textwidth]{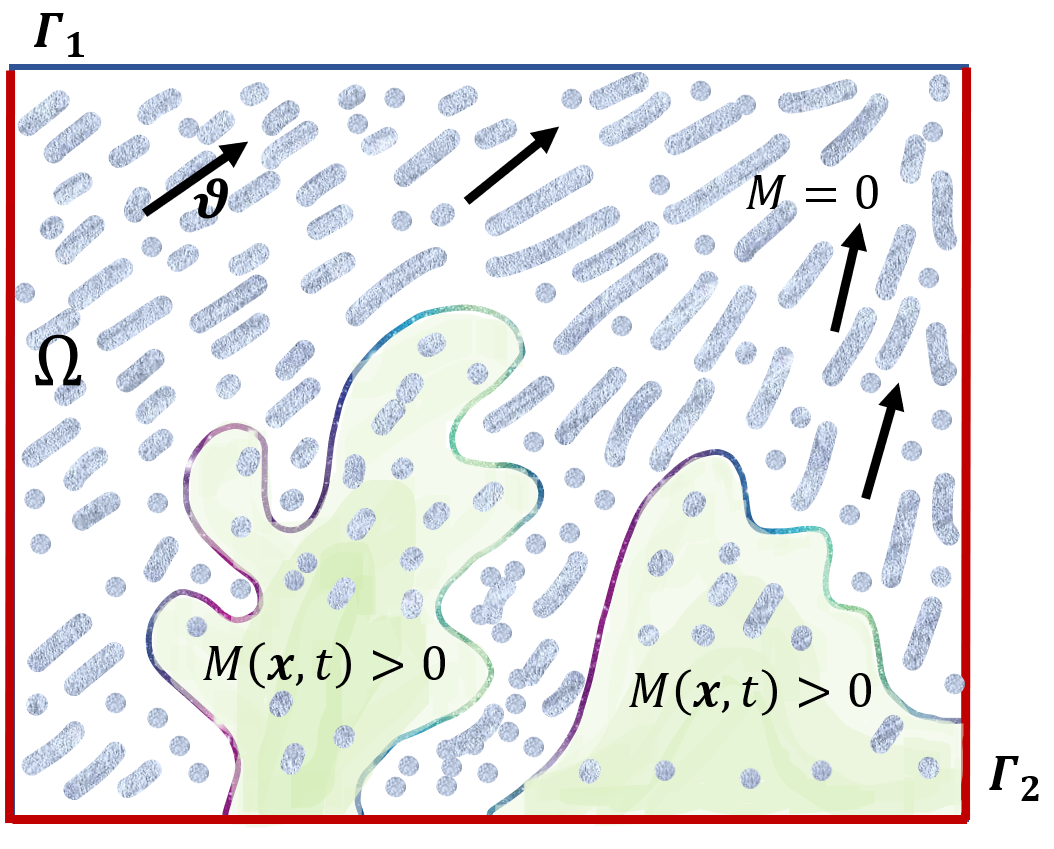}
         \caption{Coupled PDE-PDE systems}
         \label{fig:y equals x}
     \end{subfigure}
     \hfill
     \begin{subfigure}[b]{0.49\textwidth}
         \centering
          \includegraphics[width=.99\textwidth]{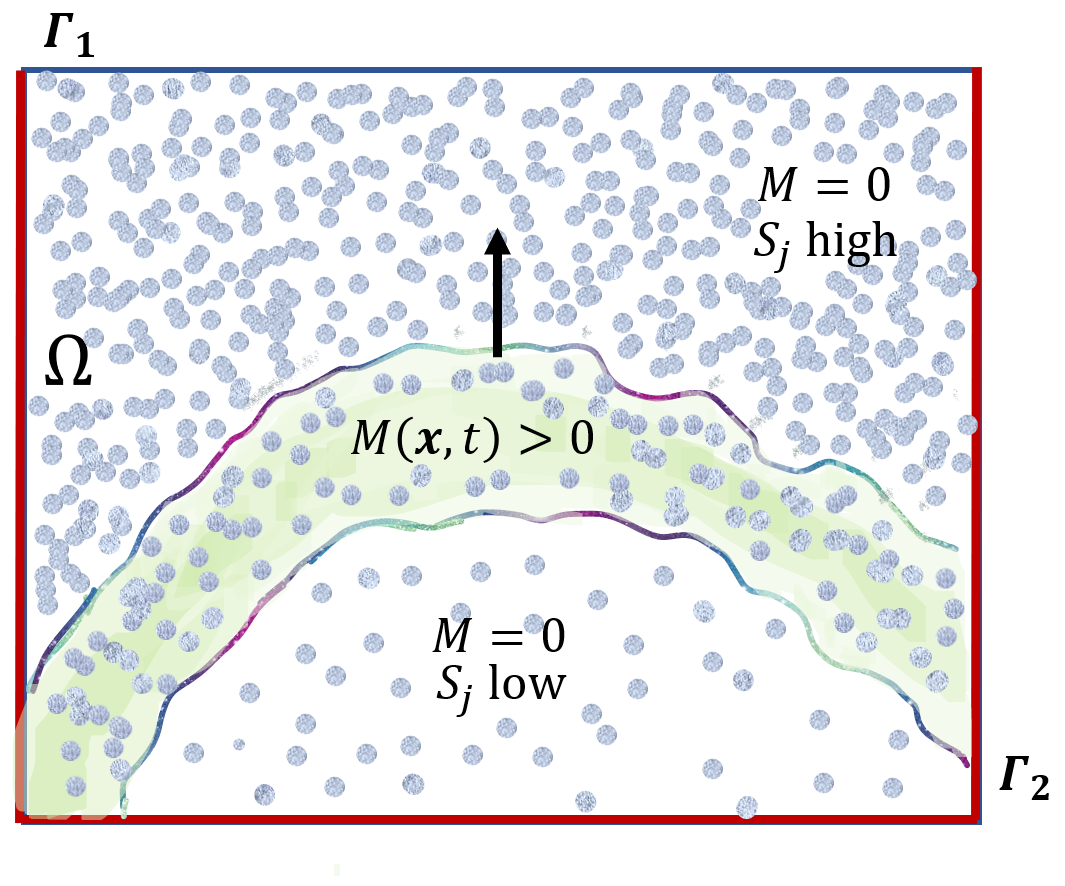}
         \caption{Coupled PDE-ODE systems}
         \label{fig:three sin x}
     \end{subfigure}
      \caption{
      Schematic figures illustrating biofilm growth in dependence of a single nutrient $S$ in an aqueous medium (a) and in an immobilized medium (b).
     The biofilm is represented by the region where $M(\bm{x},t)>0$, which is separated by the surrounding region by a sharp interface. Nutrients are consumed by  bacteria resulting in the production of biomass. The parts of the boundary where homogeneous Dirichlet and Neumann conditions are specified are also marked in the diagrams, $\Gamma_1$ (Dirichlet) in blue and $\Gamma_2$ (Neumann) in red.
    \\
    (a) PDE-PDE systems \cite{eberl2001new}: The biofilm colonies grow in a liquid containing substrates. The substrates diffuse and are transported by a flow field $\bm{v}$. The diffusion coefficient of the substrate might depend on $M$, i.e. it differs inside and outside the biofilm. 
    \\
    (b) PDE-ODE systems \cite{eberl2017spatially}: The bacteria degrade and consume an immobilized medium which is the case, e.g. for cellulolytic biofilms. The biofilm colony propagates consuming the immobile cellulose, leaving at its wake a region of low substrate concentrations.}
    \label{fig:biofilm}
\end{figure}
The existence of weak solutions of scalar nonlinear degenerate parabolic equations such as  \eqref{eq:M} was shown in the seminal papers \cite{alt1983quasilinear,alt1984nonstationary}, however, for bounded diffusion coefficients $D_0$. Uniqueness of solutions was proven in \cite{otto1996l1} using $L^1$-contraction. 
The existence of weak solutions for the biofilm model \cite{eberl2001new} with the diffusion coefficients and reaction functions in \eqref{biofilm_model} and $\mathbf{v_1}\equiv 0$ was proven in \cite{efendiev2009existence} under the assumptions of homogeneous Dirichlet boundary conditions for $M$, i.e. $\Gamma_2=\emptyset$ and $h_0\equiv0$.
The existence of the global attractor for the generated semigroup in $L^1(\Omega)$ was also shown. 
The well-posedness theory was generalized in  \cite{muller2022well} where more general functions $D_0$, $f_0$ and $f_1$ and mixed Dirichlet-Neumann boundary conditions were considered. The H\"older continuity of solutions was studied in \cite{victor2022}.  

Several extensions and variations of the single species biofilm growth model \cite{eberl2001new} have been proposed and analyzed. Most works are simulation studies and only few analytical results have been obtained. The well-posedness of multi-substrate biofilm models with $k>1$, $\nu_j>0$ in \eqref{biofilm_model}, appearing in antibiotic disinfection and quorum sensing applications, was established in \cite{sonner2015well,emerenini2017mathematical}. A PDE--ODE system with an immobile substrate, i.e. $k=1$ and $\nu_1=0$, was proposed and numerically studied in \cite{eberl2017spatially}. The simulations reproduced many experimentally observed features of  cellulolytic biofilms. The existence and stability of travelling wave solutions for this model were shown in \cite{mitra2022travelling}, but the well-posedness of the model remained an open problem. 
Many examples of semilinear coupled PDE-ODE models appearing in biology are discussed in \cite[Chapter 13]{murray2002mathematicalbiology}. 
For a PDE--ODE model for hysteretic flow through porous media with a diffusion coefficient $D_0$ depending on both $M$ and $\vec{S}$, the existence of solutions was shown in \cite{mitra2020existence}. 

We aim to develop a unifying solution theory for a large class of systems with degenerate diffusion that is motivated by models for biofilm growth, but the analysis is not limited to these applications. In fact, we expect that such models can also be used, e.g. to describe cancer cell invasion or the spread of wildfires.
In our paper we extend previous well-posedness results in the following directions: 

(a) \textit{Well-posedness results for PDE-PDE systems:} our results extend the theory developed for systems with one substrate in \cite{muller2022well} to systems with an arbitrary number of substrates $k\in \N$. Moreover, the existence of weak solutions is proven for a broad class of diffusion coefficients $D_0$ and $D_j$, reaction terms $f_j$, and allows for flow-fields $\bm{v}_j$ which has not been considered in earlier works. 

(b) \textit{Well-posedness of PDE--ODE systems ($\nu_j=0$):} 
The well-posedness of PDE-ODE systems of the form \eqref{eq:main} with a degenerate and/or singular diffusion coefficient $D_0$ has been an open problem. The theory we develop applies to the cellulolytic biofilm model \cite{eberl2017spatially} and implies its local well-posedness. 

(c) \textit{Mixed as well as homogeneous Neumann conditions for $M$:} Global well-posedness is shown for mixed Dirichlet-Neumann boundary conditions and a local well-posedness result is established assuming homogeneous Neumann boundary conditions for $M$.  
Moreover, apart from well-posedness results we also analyze qualitative properties such as boundedness or blow-up of solutions. 

(d)  \textit{Global spatial regularity of M:} We further show that under certain porous medium type growth conditions on $D_0$ close to zero, the biomass concentration $M$ inherits some  global spatial regularity. 

The outline of our paper is as follows: 
In Section \ref{sec:math} we introduce notation, state our assumptions on the data and introduce the concept of weak solutions. In Section \ref{sec:mixed} we prove global well-posedness for systems with Dirichlet or mixed Dirichlet-Neumann boundary conditions for $M$. In \Cref{sec:Neumann} we establish local well-posedness for systems with homogeneous Neumann conditions for $M$. We also derive 
criteria ensuring finite-time blow-up of the model and discuss some important examples.
In \Cref{sec:fronts} we show that even in the degenerate case, the biomass density $M$ possesses some global spatial regularity.

\section{Problem formulation}\label{sec:math}

In this section, we introduce notation and a suitable functional framework. We state the properties of the coefficient functions and the boundary and initial data for system \eqref{eq:main} that will be assumed throughout the paper. Moreover, we introduce weak solutions of the problem.

\subsection{Preliminaries}\label{sec:prelim}

\paragraph{Functional setting:} 

Let $\Om\subset\R^d$ be a bounded Lipschitz domain. The boundary $\p\Om$ is divided into two regular open  subsets $\G_1$ and $\G_2$  that are both Lipschitz boundaries and such that $\partial \Omega=\overline{\G}_1\cup \overline{\G}_2$ and $\G_1\cap\G_1=\emptyset$, e.g. see \cite{salsa2016}.
We denote by $(\cdot,\cdot)$ and $\|\cdot\|$ the $L^2(\Om)$ inner product and norm. The norm of any other Banach space $V$ will be denoted by $\|\cdot\|_V$.
For $1\leq p\leq \infty$, let $W^{1,p}(\Om)$ denote the Sobolev space of functions $u\in L^p(\Om)$ such that the weak derivative $\del u$ exists and $\del u\in (L^p(\Om))^d$. For $r\in (0,1)$ and $p\in [0,\infty)$, the Sobolev-Slobodeckij space $W^{r,p}(\Om)$ is the set of functions $u\in L^p(\Om)$ such that
\begin{align}
\|u\|_{W^{r,p}(\Om)}:=\|u\|_{L^p(\Om)} + \int_{\Om}\int_{\Om}\dfrac{|u(\bm{x})-u(\bm{y})|^p}{|\bm{x}-\bm{y}|^{d+ rp}}d\bm{x} d\bm{y}<\infty.
\end{align}
We define $H^r(\Om):=W^{r,2}(\Om)$ for $r\in (0,1]$. Let $H^1_0(\Om)$ denote the  closure of $C^\infty_c(\Om)$ in $H^1(\Om)$, which is equipped with the norm $\|u\|_{H^1_0(\Om)}:=\|\del u\|$. Similarly, we define
\begin{subequations}\label{eq:SobolevSpaces}
\begin{align}
\H:=\{u\in H^1(\Om):\; \mathrm{tr}(u)=0 \text{ in } \G_1 \}\quad \text{with the norm} \quad \|u\|_{\H}:=\|u\|_{H^1(\Om)}.
\end{align}
The dual spaces of $H^1_0(\Om)$ and $\H$ are defined as:
\begin{align}
H^{-1}:=(H^1_0(\Om))^*\quad \text{ and } \quad \Hm=(\H)^*.
\end{align}
Observe that,
\begin{align}\label{eq:HisH1}
\text{ if } \Gamma_1=\emptyset \text{ then } \H=H^1(\Om),\quad  \text{ if }  \Gamma_2=\emptyset \text{ then } \H=H^1_0(\Om). 
\end{align}
\end{subequations}
Let $\langle \cdot,\cdot \rangle$ denote the duality pairing of $\H$ and $\Hm$. The duality pairing of any other Sobolev space $V$ will be denoted by $\langle \cdot,\cdot\rangle_{V,V^*}$.

Finally, we introduce the following Bochner spaces that are important for our analysis:
\begin{subequations}\label{eq:Bochner}
\begin{align}
&\W:=L^\infty(0,T;L^\infty(\Om)) \cap H^1(0,T;\Hm)\cap C([0,T];L^2(\Om)),\\
&\X:=L^2(0,T;\H)\cap H^1(0,T;\Hm),\\
&\Y:=L^\infty(0,T;H^1(\Om))\cap H^1(0,T;L^2(\Om)),\\
&\Z:=C([0,T];(L^2(\Om))^k).
\end{align}
\end{subequations}
Note that we have the continuous embedding $\X\hookrightarrow C([0,T];L^2(\Omega))$.

\paragraph{Inequalities:} 
Note that the Poincar$\acute{\text{e}}$ inequality, i.e. $\|u\|\leq C_\Om\|\del u\|$ for $u\in H^1_0(\Om)$, where $C_\Om>0$ denotes the Poincar\'e constant, 
also holds for functions $u\in \H$ if $\Gamma_1\not= \emptyset$. 

We recall Young's inequality 
stating that for any $\sigma>0$ one has
\begin{equation}\label{Eq:YoungsIneq}
ab\leq \frac{1}{2\sigma}a^2+ \frac{\sigma}{2}b^2  \qquad \forall a,b\in\R. 
\end{equation}

We will also frequently use Gronwall's Lemma stating that if $u,\,a,\,b\in C(\R)$ are non-negative, then $u(t) \leq a(t) +  \int_0^t u(\vr)\,b(\vr)\,\dd\vr$   implies that
\begin{subequations}
\begin{equation}\label{eq:Gronwall}
u(t) \leq a(t) +  \int_0^t a(\vr)\,b(\vr)e^{\int_\vr^t b(\tau)d\tau }\, \dd\vr
\end{equation}
for all $t>0$; and the discrete counterpart of the Gronwall Lemma: Let $\{u_n\}_{n\in \N}$, $\{a_n\}_{n\in \N}$, $\{b_n\}_{n\in \N}$ be non-negative sequences such that $u_n\leq a_n + \sum_{k=1}^{n-1} b_k u_k$. Then 
\begin{equation}\label{eq:discGronwall}
u_n\leq a_n + \sum_{k=1}^{n-1} a_kb_k \exp\left( \sum_{k<j<n} b_j\right).
\end{equation}
\end{subequations}
Finally, for a convex $\eta\in C(\R^+)$ with $\eta(0)=0$ we will use Jensen's inequality and the super-additivity property:
\begin{subequations}\label{eq:ConvexFuncIneq}
\begin{align}
&\text{Jensen's inequality: }&&\eta\left (\tfrac{1}{|\Om|}\int_{\Om} |f| \right )\leq \tfrac{1}{|\Om|} \int_{\Om} \eta(|f|)\qquad \text{ for } f\in L^1(\Om);\label{eq:ConvexFuncIneq1}\\
&\text{Super-additivity: }&& \eta(a)+ \eta(b)\leq \eta(a+b)\qquad \text{ for all } a,\,b\geq 0.\label{eq:ConvexFuncIneq2}
\end{align}
\end{subequations}

\paragraph{Further notation:} We denote by $[\cdot]_+$ and $[\cdot]_-$ the positive and negative part of functions, i.e. $[\cdot]_+:=\max\{\cdot,0\}$ and $[\cdot]_-:=\min\{\cdot,0\}$, respectively. 
By $C>0$  we refer to an undisclosed constant in the estimates that may vary in each occurrence and from line to line. Finally, 
the notation 
\begin{align}\label{eq:lesssim}
a\lesssim b \quad \text{ implies that } a\leq C b \quad \text{for some constant } C>0
\end{align}
which does not depend on a parameter $\e>0$ (to be specified later).

\subsection{Assumptions on the data}\label{sec:prop_func}

We specify the hypotheses on the data associated with \eqref{eq:main}. 
\begin{enumerate}[label=(P\theEUXproperties)]
 \setlength\itemsep{-0.2em}
\item The diffusion coefficient $D_0:[0,1)\mapsto [0,\infty)$ is a continuous function that is strictly increasing in  $[0,\epsilon_0)$ for some $\epsilon_0\in (0,1]$, and satisfies
$$D_0(0)=0,\;\; \lim\limits_{m\nearrow 1}D_0(m)=\infty\;\text{ and } \;\;D_0(m)>0 \text{ for all } m\in (0,1).$$
The primitive of $D_0$, expressed by the Kirchhoff transform function $\Phi:[0,1)\to [0,\infty)$, $\Phi(m)=\int_0^m D_0(\vr)\,\dd\vr,$ satisfies
\begin{align}
\lim\limits_{m\nearrow 1}\Phi(m)=\infty.
\label{eq:defPhi}
\end{align}
\label{prop:D}
\stepcounter{EUXproperties}

\item The diffusion coefficients $D_j:[0,1]\times \R^k\to [D_{\min},D_{\max}]$, $j=1,\dots,k,$ with constants $0<D_{\min}<D_{\max}<\infty$, are Lipschitz continuous with respect to both variables.\label{prop:Df}
\stepcounter{EUXproperties}
\end{enumerate}

\begin{remark}[Biofilm models]
In models for biofilm growth, see e.g. \cite{eberl2017spatially,efendiev2009existence}, the diffusion coefficient $D_0$ is given by the function in \eqref{biofilm_model}, and the diffusion coefficient $D_1$ for the (single) substrate is assumed to be constant.
These functions satisfy all assumptions in \ref{prop:D}--\ref{prop:Df}. 
\end{remark}

\begin{remark}[Generalizations of the assumptions \ref{prop:D}--\ref{prop:Df}]
Our analysis can be extended to systems where the diffusion coefficient $D_0$ is piecewise constant, non-degenerate, and/or has a porous media type degeneracy, e.g., $D_0(m)=m^a$, for some constant $a>1$. To keep the analysis uniform and self-contained, we only analyze the case \ref{prop:D} which is more involved and arises in models for biofilm growth.

 
We could also allow for degenerate diffusion coefficients $D_j$ if they only depend on the substrate $S_j$. Some additional assumptions are required to cover this case which are discussed in \Cref{cor:degenerateDj}.
\end{remark}

For the flow-field and reaction terms we make the following assumptions:
\begin{enumerate}[label=(P\theEUXproperties)]
 \setlength\itemsep{-0.2em}
\item The flow-field satisfies $\bm{v}_j\in (L^\infty(Q))^d$, $j=1,\dots,k$.\label{prop:fl}
\stepcounter{EUXproperties}
\item 
The functions $f_0,\,f_j\in C([0,1]\times \R^{k})$ are uniformly Lipschitz continuous. 
They can be extended to uniformly Lipschitz continuous functions on $\R^{k+1}$ which (to simplify notation) we will also denote by $f_0,\,f_j$. The constant  $C_L\geq 0$ is the maximum of the Lipschitz constants of $f_0,f_1,\dots, f_k$.
Moreover,  $f_0(0,\vec{s})\geq 0$ for all $\vec{s}\in \R^k$. \label{prop:fg1}
\end{enumerate}
\vspace{-1.5em}

\begin{enumerate}[label=(P\theEUXproperties*)]
\item There exists a non-negative and locally Lipschitz continuous function $f_{max}\in C(\R)$ such that $f_0(\cdot,\vec{s}) \leq f_{\max}(\cdot)$ for all $\vec{s}\in \R^k$. \label{prop:fg2}\stepcounter{EUXproperties}
\end{enumerate}

\begin{remark}[Assumptions \ref{prop:fg1}--\ref{prop:fg2}]
Assumption \ref{prop:fg1} admits reaction functions $f_0(\cdot,\vec{s})$, $f_j(\cdot,\vec{s})$ (for $\vec{s}\in \R^k$) that have superlinear growth with respect to their first argument as long as they are Lipschitz continuous within the interval $[0,1]$. This is because the physically relevant solutions satisfy $M\in [0,1]$. However, before proving the upper bound for $M$ (in \Cref{lemma:MaxRegSol}), we need the functions $f_0$ and $f_j$ to be defined in $\R^{k+1}$ (in \Cref{lemma:ExistRegSol}) which is why we introduce the extensions.

Assumption \ref{prop:fg2} is needed to derive the $L^\infty$ bound for the solution $M$. This is important in our setting since physically relevant solutions take values in $[0,1)$, otherwise, the models are not valid. Such $L^\infty$ bounds may not be required in other applications, for example for porous medium-type equations. We therefore explicitly state in all theorems and lemmas where this assumption is required and where it can be omitted.

Under additional assumptions, the analysis can be generalized also to
systems with reaction functions that depend on $\bm{x}$ and $t$, e.g. see \cite{muller2022well}. To simplify the presentation of our results, we omit this dependency here.  

Finally, we remark that the condition $f_0(\cdot,\vec{s})\leq f_{\max}(\cdot)$ can be relaxed to $f_0(\cdot,\vec{s})\leq g(|\vec{s}|)\,f_{\max}(\cdot)$ for some $g\in C(\R^+)$.  Then, for the proofs to go through, we need uniform $L^\infty$ bounds on the solution $(M,\vec{S})$ which can be established for a certain class of functions $f_j$. For an example, we refer to \Cref{cor:degenerateDj}. 
\end{remark}

For the boundary and initial data we assume the following properties: 
\begin{enumerate}[label=(P\theEUXproperties)]
 \setlength\itemsep{-0.2em}
\item The initial data $M_0\in L^\infty(\Om)$ satisfies  
$$
\underline{M}:=\mathrm{ess}\inf\limits_{\bm{x}\in \Om} \{M_0\}\geq 0\quad \text{and}\quad
\overline{M}:=\mathrm{ess}\sup\limits_{\bm{x}\in \Om} \{M_0\}<1.
$$ 
The initial data $\vec{S}_0\in (L^\infty(\Om))^k$ satisfies 
$$
\underline{S}:=\min\limits_{1\leq j\leq k}\mathrm{ess}\inf\limits_{\bm{x}\in \Om}\{S_{0,j}\}>-\infty\quad \text{and} \quad \overline{S}:=\max\limits_{1\leq j\leq k}\mathrm{ess}\sup\limits_{\bm{x}\in \Om}\{S_{0,j}\}<\infty.
$$
\label{prop:IC}
\stepcounter{EUXproperties}
\item The Dirichlet boundary data $h_0:\G_1\to [\underline{M},\overline{M}]$ is such that there exists $h^e_0\in H^1(\Om)$ satisfying $h^e_0|_{\G_1}=h_0$ in a trace sense. If $\Gamma_1=\emptyset$ then set $h^e_0\equiv 0$. For the Dirichlet data $h_j:\p\Om\to [\underline{S},\overline{S}]$ there  also exist functions $h^e_j\in H^1(\Om)$ such that $h^e_j|_{\partial\Om}=h_j$ in a trace sense.
\label{prop:BC} \stepcounter{EUXproperties}
\end{enumerate}

\begin{remark}[Assumption \ref{prop:BC}]\label{remark:bcS}
Observe that, under the assumptions in \ref{prop:BC}, it is always possible to choose the extensions to $\Om$ such that  $h^e_0\in [\underline{M},\overline{M}]$ and $h^e_j\in [\underline{S},\overline{S}]$ a.e. in $\Om$. For example, consider $\bar h^e=\min\{h^e_0,\overline{M}\}\in H^1(\Om)$. Then $\bar h^e\leq \overline{M}$ a.e. and $\bar h^e=h_0$ on $\partial\Om$. 
This choice will implicitly be used in the proofs that follow. Similar arguments apply to the boundary conditions $h_j.$

To keep notations simple we only consider Dirichlet boundary conditions for the substrates $\vec{S}$. Mixed Dirichlet-/Neumann boundary conditions with different divisions of the boundary depending on $j$ can also be assumed without major modifications in the subsequent arguments, see e.g. \cite{muller2022well}.
\end{remark}

\subsection{Weak solutions}
We introduce the following notion of weak solutions.
\begin{definition}[Weak solution] The pair $(M,\vec{S})$ with $M\in \W$ (see \eqref{eq:Bochner}), $\Phi(M)\in L^2(0,T;H^1(\Om))$, $S_j\in H^1(0,T;H^{-1}(\Om))\cap C([0,T];L^2(\Om))$, and $\nu_j S_j\in L^2(0,T;H^1(\Om))$, $j=1,\dots,k,$  is a weak solution of \eqref{eq:main} provided that $M$ is bounded in $[0,1)$ a.e. in $Q$, $M(0)=M_0$ and  $\vec{S}(0)=\vec{S}_0$ a.e. in $\Omega$, $\Phi(M)=\Phi(h_0)$ on $\G_1$ and $\nu_j S_j=\nu_j h_j$ on $\p\Om$ in the trace sense, and for all $\f\in L^2(0,T;\H)$, $\vec{\z}\in L^2(0,T;(H^1_0(\Om))^k)$, we have
\begin{subequations}\label{eq:weak}
\begin{align}
&\int_0^T \langle \f,\p_t M \rangle + \int_0^T (\del \Phi(M),\del \f)= \int_0^T (f_0(M,\vec{S}),\f),\\
&\int_0^T \langle \z_j,\p_t S_j \rangle_{H^1_0,H^{-1}} + \nu_j \int_0^T ( D_j(M,\vec{S})\del S_j + \bm{v}_j\, S_j,\del \z_j)= \int_0^T (f_j(M,\vec{S}),\z_j).
\end{align}
\end{subequations}\label{def:WeakSol}
\end{definition}

\begin{remark}
In \Cref{def:WeakSol} we take $\nu_j \,S_j\in  L^2(0,T;H^1(\Om))$ instead of taking $S_j\in L^2(0,T;H^1(\Om))$. This is required since $\nu_j=0$ is allowed in our setting, and in this case, $S_j$ might not possess any spatial regularity. Similarly, we see that $\Phi(M)$, and not $M$, possesses spatial regularity. Therefore, the traces are also only defined for functions with sufficient spatial regularity.
\end{remark}

\section{Well-posedness for Dirichlet and mixed boundary conditions}\label{sec:mixed}

In this section, we prove the well-posedness of weak solutions for the case when $\G_1$ has non-zero measure, i.e. $M$ either satisfies Dirichlet boundary condition or mixed Dirichlet-Neumann boundary condition.
The main results of this section are stated in the following theorems. 

\begin{theorem}[Existence and boundedness]\label{theo:Existence}
Let \ref{prop:D}--\ref{prop:BC} and \ref{prop:fg2} be satisfied, and
$\G_1$ have  non-zero measure. Then, there exists a weak solution $(M,\vec{S})$ of\eqref{eq:main}  in the sense of \Cref{def:WeakSol}. Furthermore, a constant $\d\in (0,1)$ exists such that  $0\leq M\leq  1-\d$ a.e. in $Q$.
\end{theorem}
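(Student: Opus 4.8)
The plan is to prove existence via a regularization--compactness--fixed-point scheme, and to derive the uniform bound $0 \le M \le 1-\d$ along the way. First I would regularize the degenerate/singular problem: for a small parameter $\e>0$, replace $D_0$ by a strictly positive, bounded, globally Lipschitz approximation $D_0^\e$ (e.g. cutting off the singularity near $m=1$ and lifting away from zero near $m=0$), and correspondingly regularize the Kirchhoff function $\Phi$ to $\Phi_\e$. This yields a non-degenerate quasilinear parabolic system for which classical theory (in the spirit of \cite{alt1983quasilinear}) applies. The natural way to handle the coupling is a fixed-point argument: given $\vec{S}$ (and $M$ where it enters $D_j$ and the reaction terms), solve the $M$-equation and each $S_j$-equation separately, then set up a map on a suitable subset of $\W \times \Z$ and invoke Schauder's fixed point theorem. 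The smoothing from the spatial operators plus the time regularity in $\X$ and $\Z$ should give the compactness (via Aubin--Lions) needed to make the solution map continuous and compact.

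The crux is deriving $\e$-uniform a priori bounds, and here the order matters. I would first establish the two-sided $L^\infty$ bound for $M$, which is also the source of the constant $\d$. The lower bound $M \ge 0$ follows by testing the $M$-equation with $[M]_-$ and using $f_0(0,\vec{s}) \ge 0$ from \ref{prop:fg1}, so that the negative part cannot develop. For the upper bound I would construct a supersolution using \ref{prop:fg2}: since $f_0(\cdot,\vec{s}) \le f_{\max}(\cdot)$ independently of $\vec{s}$, compare $M$ with the solution $\bar m(t)$ of the scalar ODE $\bar m' = f_{\max}(\bar m)$, $\bar m(0)=\overline{M}$; because $\lim_{m\nearrow 1}\Phi(m)=\infty$ in \ref{prop:D} the singular diffusion acts as a barrier preventing $M$ from reaching $1$. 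On a finite horizon $[0,T]$ this produces a bound $M \le 1-\d$ with $\d>0$ depending on $T$, $\overline{M}$, $f_{\max}$ but crucially \emph{not} on $\e$. The comparison must be done against a solution of the regularized equation so that it transfers to the limit.

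With the $L^\infty$ bound in hand, the remaining estimates are the energy estimates. Testing the $M$-equation with $\Phi_\e(M)$ (or rather the appropriate admissible variant respecting the boundary data $h_0$) gives control of $\|\del \Phi_\e(M)\|_{L^2(Q)}$ uniformly in $\e$, thanks to $\Phi_\e(M)$ staying bounded by the $L^\infty$ bound on $M$ and $\Phi(1-\d)<\infty$; from the equation one then reads off $\p_t M \in L^2(0,T;\Hm)$ uniformly. For each $S_j$ I would test with $S_j$ (shifted by the boundary extension $h_j^e$ from \ref{prop:BC}) and use the uniform ellipticity of $D_j$ from \ref{prop:Df}, the flow-field bound \ref{prop:fl}, the Lipschitz reaction bounds \ref{prop:fg1}, together with Young's and Gronwall's inequalities to bound $\nu_j S_j$ in $L^2(0,T;H^1)$ and $S_j$ in $H^1(0,T;H^{-1}) \cap L^\infty(0,T;L^2)$ uniformly. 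The Poincar\'e inequality is available here precisely because $\G_1$ has non-zero measure, which is where this hypothesis is used.

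Finally I would pass to the limit $\e \to 0$. The uniform bounds yield weak(-$*$) convergent subsequences; Aubin--Lions-type compactness gives strong $L^2(Q)$ convergence of $M^\e$ and $S_j^\e$, enough to pass to the limit in the nonlinear terms $f_0, f_j, D_j$ (using their continuity/Lipschitz properties) and, via the uniform $L^\infty$ bound $M^\e \le 1-\d$ keeping us away from the singularity, to identify $\del \Phi_\e(M^\e) \rightharpoonup \del \Phi(M)$ and recover the weak formulation \eqref{eq:weak}, including the attainment of initial and boundary data in the trace sense. The main obstacle I anticipate is making the upper-bound/comparison argument fully rigorous at the regularized level so that $\d$ is genuinely $\e$-independent: the degenerate lower part and the singular upper part of $D_0$ must both be respected by the approximation $D_0^\e$, and the comparison principle has to be applied carefully to the quasilinear regularized equation (testing with truncations like $[M^\e - (1-\d)]_+$ and exploiting monotonicity of $\Phi_\e$) rather than invoking a black-box maximum principle.
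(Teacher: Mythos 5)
Your overall scheme (regularize the Kirchhoff transform, derive $\e$-uniform bounds, use compactness and Schauder) matches the paper's strategy in outline, but there is a genuine gap at the step you yourself call the crux: the $\e$-uniform upper bound $M\leq 1-\d$. Your proposed mechanism --- comparison with the ODE solution of $\bar m'=f_{\max}(\bar m)$, $\bar m(0)=\overline{M}$, plus the assertion that the singularity $\lim_{m\nearrow 1}\Phi(m)=\infty$ ``acts as a barrier'' preventing $M$ from reaching $1$ --- cannot deliver this bound. The ODE comparison only yields $M\leq \bar m(t)$ (this is part (a) of \Cref{lemma:MaxRegSol}), and since $f_{\max}\geq 0$ the function $\bar m$ is nondecreasing and may reach $1$ in finite time; beyond that time the comparison gives nothing, so at best you obtain local-in-time existence, i.e. the content of \Cref{theo:Neumann}, not the global statement of \Cref{theo:Existence}. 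Worse, the barrier heuristic is false as stated: it makes no use of the boundary condition at all, so the same reasoning would ``prove'' uniform boundedness away from $1$ under homogeneous Neumann conditions, contradicting the finite-time blow-up examples in \Cref{sec:blowup} (spatially constant solutions of \eqref{eq:explodeEq}, for which the diffusion term plays no role and $M$ reaches $1$ in finite time).

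What is missing is exactly the step where the hypothesis that $\G_1$ has non-zero measure enters --- and you have misplaced that hypothesis: it is not needed for the Poincar\'e inequality in the $S_j$ estimates (the substrates satisfy Dirichlet conditions on all of $\p\Om$ anyway), but for the bound on $M$. The paper's proof (\Cref{lemma:MaxRegSol}, Step 3) constructs a \emph{stationary} barrier: by Lax--Milgram, solve $(\del\hat u,\del\f)=(\hat C,\f)$ for all $\f\in\H$ with $\hat C:=\max_{0\leq t\leq T}f_{\max}(\hat M(t))$ --- solvable and coercive precisely because $\G_1\neq\emptyset$ --- then prove $\hat u\in L^\infty(\Om)$ by a Stampacchia-type truncation argument, and finally apply the comparison principle of Otto to get $\Phi_\e(M_{s,\e}(t))\leq \hat u+\overline{M}<\infty$ uniformly in $\e$ and $t$. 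Only at this point does the singularity of $\Phi$ at $1$ do its work, converting the finite bound on $\Phi_\e(M_{s,\e})$ into $M_{s,\e}\leq 1-\d$ globally in time; this is what allows the fixed-point argument to be iterated to cover an arbitrary horizon $T$. Without this elliptic-barrier step (or a substitute making essential use of the Dirichlet portion of the boundary), your scheme cannot close. A secondary, repairable difference: you run Schauder on $(M,\vec S)$ at the regularized level and send $\e\to 0$ last, whereas the paper first passes $\e\to 0$ for fixed $\vec s$ (\Cref{lemma:ExistUnRegSol}) and then applies Schauder only in $\vec s$ (\Cref{lemma:Schauder}); but the missing barrier argument is not a matter of ordering.
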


\begin{theorem}[Uniqueness]\label{theo:ExistUnique}
Let the assumptions of Theorem \ref{theo:Existence} hold. In addition, for each $j\in \{1,\dots,k\}$, assume that either $\nu_j=0$ or the diffusion coefficient $D_j$ depends only on $S_j$.  Then, a unique  weak solution $(M,\vec{S})$ of\eqref{eq:main} exists  in the sense of \Cref{def:WeakSol}.
\end{theorem}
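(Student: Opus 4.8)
The plan is to prove uniqueness by assuming two weak solutions $(M_1,\vec{S}_1)$ and $(M_2,\vec{S}_2)$ share the same data and showing their difference vanishes. The canonical difficulty is that $M$ itself lacks spatial regularity—only $\Phi(M)$ lives in $L^2(0,T;H^1(\Om))$—so I cannot directly test the $M$-equation with $M_1-M_2$. Instead I would exploit the $\Hm$ structure: subtracting the two weak formulations for $M$ gives $\p_t(M_1-M_2) = \Delta(\Phi(M_1)-\Phi(M_2)) + (f_0(M_1,\vec{S}_1)-f_0(M_2,\vec{S}_2))$ in the dual sense, and I would test this with the $\Hm$-type solution of an auxiliary elliptic problem (i.e., with $\mathcal{A}^{-1}(M_1-M_2)$ where $\mathcal{A}$ is the Laplacian with the mixed boundary conditions). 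This is the standard Otto/Alt--Luckhaus $H^{-1}$ technique. The key monotonicity ingredient is that $\Phi$ is nondecreasing, so $(\Phi(M_1)-\Phi(M_2))(M_1-M_2)\ge 0$ pointwise, which produces a good (nonnegative) term after integration by parts and lets me close a Gronwall estimate on $\|M_1-M_2\|_{\Hm}$.

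For the substrate equations I would split into the two allowed regimes. When $\nu_j=0$, Equation \eqref{eq:Sj} is a pointwise ODE $\p_t S_j = f_j(M,\vec{S})$, so testing the difference with $S_{1,j}-S_{2,j}$ directly (no spatial operator) and using the uniform Lipschitz continuity of $f_j$ from \ref{prop:fg1} yields $\tfrac{1}{2}\tfrac{\dd}{\dd t}\|S_{1,j}-S_{2,j}\|^2 \lesssim \|M_1-M_2\|\,\|S_{1,j}-S_{2,j}\| + \sum_i \|S_{1,i}-S_{2,i}\|\,\|S_{1,j}-S_{2,j}\|$. The awkward cross term is $\|M_1-M_2\|$ (the $L^2$ norm), which I do not control directly from the $M$-estimate (that one only gives the weaker $\Hm$ norm). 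When $\nu_j>0$ but $D_j$ depends only on $S_j$, I test the difference equation with $S_{1,j}-S_{2,j}\in H^1_0(\Om)$; the diffusion term becomes $\nu_j(D_j(S_{1,j})\del S_{1,j}-D_j(S_{2,j})\del S_{2,j},\del(S_{1,j}-S_{2,j}))$, which I would handle by introducing the Kirchhoff-type primitive $\Psi_j$ with $\Psi_j'=D_j$ so that $D_j(S_{\cdot,j})\del S_{\cdot,j}=\del\Psi_j(S_{\cdot,j})$; this restores a clean coercive term $\|\del(\Psi_j(S_{1,j})-\Psi_j(S_{2,j}))\|$ after algebraic manipulation, and the reason $D_j$ must depend on $S_j$ alone (rather than also on $M$) is precisely that an extra $M$-dependence would spoil this gradient structure and reintroduce an uncontrolled $\del M$ term.

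The crux is reconciling the mismatched norms: the $M$-equation naturally closes in $\Hm$, but the substrate estimates want $\|M_1-M_2\|$ in $L^2(\Om)$. The plan to bridge this is to upgrade the $M$-estimate using the monotonicity and a nonlinear interpolation: since $\Phi$ is strictly increasing on $[0,\e_0)$ and $M$ is bounded away from $1$ by Theorem \ref{theo:Existence}, I expect a bound of the form $\|M_1-M_2\|\lesssim \|\Phi(M_1)-\Phi(M_2)\|^{1/2}\,\|M_1-M_2\|_{\Hm}^{1/2}$ (an $H^{-1}$--$H^1$ interpolation), so that the coercive term $\int(\Phi(M_1)-\Phi(M_2))(M_1-M_2)$ absorbs the troublesome contribution after a Young's inequality application via \eqref{Eq:YoungsIneq}. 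I would therefore set up one combined energy functional $\mathcal{E}(t)=\tfrac{1}{2}\|M_1-M_2\|_{\Hm}^2+\tfrac{1}{2}\sum_j\|S_{1,j}-S_{2,j}\|^2$, differentiate in time, collect all cross terms, and apply Young's and Gronwall's inequalities \eqref{eq:Gronwall}; since all initial differences vanish, $\mathcal{E}\equiv 0$, forcing $M_1=M_2$ and $\vec{S}_1=\vec{S}_2$.

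The main obstacle, as flagged above, is the regularity/norm gap for $M$: making the $\Hm$-to-$L^2$ interpolation rigorous while genuinely absorbing it into the monotone coercive term, and verifying that the auxiliary elliptic inverse $\mathcal{A}^{-1}$ is well-defined and time-differentiable under the mixed boundary conditions (using $\G_1$ of nonzero measure so the Poincar\'e inequality holds on $\H$). I would pay special attention to the time-regularity needed to justify $\tfrac{\dd}{\dd t}\|M_1-M_2\|_{\Hm}^2 = 2\langle \p_t(M_1-M_2), \mathcal{A}^{-1}(M_1-M_2)\rangle$, which follows from $M_i\in H^1(0,T;\Hm)$ in the definition of $\W$.
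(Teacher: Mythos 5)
Your strategy (an $\Hm$-duality energy estimate for $M$ coupled with $L^2$ energy estimates for $\vec{S}$) is genuinely different from the paper's, but it breaks down at exactly the point you flagged as the crux, and the bridge you propose does not exist: the interpolation inequality $\|M_1-M_2\|\lesssim \|\Phi(M_1)-\Phi(M_2)\|^{1/2}\,\|M_1-M_2\|_{\Hm}^{1/2}$ is \emph{false} when $\Phi$ degenerates at $0$. Take $M_1\equiv\delta$ and $M_2\equiv 0$ on $\Om$, with $D_0(m)\simeq m^{a}$ near $0$ (admissible under \ref{prop:D}), so that $\Phi(m)\simeq m^{a+1}$. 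Then the left-hand side is of order $\delta$, while the right-hand side is of order $(\delta^{a+1})^{1/2}\delta^{1/2}=\delta^{1+a/2}=o(\delta)$, so no constant can work as $\delta\to 0$; and this configuration (one solution with small positive biomass where the other vanishes) is exactly the spreading-front regime, so it cannot be excluded for solution differences. The obstruction is structural, not technical: the dissipation term $\int(\Phi(M_1)-\Phi(M_2))(M_1-M_2)$ does control $\|\Phi(M_1)-\Phi(M_2)\|_{L^2}^2$ (since $\Phi$ is Lipschitz on $[0,1-\d]$ and both factors share a sign), but it can never control $\|M_1-M_2\|_{L^2}^2$, because near the degeneracy $M_1-M_2$ may be of order one while $\Phi(M_1)-\Phi(M_2)$ is negligible. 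Since the Lipschitz couplings $f_0,f_j$ produce precisely terms of the form $\int |M_1-M_2|\bigl(|w|+|S_{1,j}-S_{2,j}|\bigr)$, your combined Gronwall functional cannot be closed from $\|M_1-M_2\|_{\Hm}$ plus the degenerate dissipation. (A secondary, fixable, issue: for $\nu_j>0$, testing the difference of the $S_j$-equations with $S_{1,j}-S_{2,j}$ does not give a sign-definite diffusion term, since $\del(\Psi_j(S_{1,j})-\Psi_j(S_{2,j}))\cdot\del(S_{1,j}-S_{2,j})$ has no pointwise sign; one should instead test with $\Psi_j(S_{1,j})-\Psi_j(S_{2,j})$, or run the duality argument there, which does close because $\Psi_j'=D_j\geq D_{\min}>0$ is non-degenerate. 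Your reason for why $D_j$ must depend on $S_j$ alone is nevertheless correct in spirit.)

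The paper sidesteps this obstruction by abandoning energy estimates for the $M$-equation entirely and working in $L^1$, where Lipschitz sources and degenerate diffusion are compatible: by Otto's $L^1$-contraction theorem \cite{otto1996l1} (and \cite{andreianov2004uniqueness} for Neumann conditions), the solution map $\vec{s}\mapsto M_s$ of the scalar degenerate problem \eqref{eq:unreg} satisfies $\|(M_{s_1}-M_{s_2})(t)\|_{L^1(\Om)}\leq \int_0^t \|f_0(M_{s_1},\vec{s}_1)-f_0(M_{s_2},\vec{s}_2)\|_{L^1(\Om)}$, with no coercivity from the diffusion needed at all. Applying the same principle to the substrate equations, rewritten with the Kirchhoff primitives $\Phi_j$ of \eqref{eq:DefPhiJ} (this is where the hypothesis $D_j=D_j(S_j)$ or $\nu_j=0$ enters), yields \Cref{lemma:L1cont}: the map $\calA$ satisfies an $L^1$-contraction estimate with factor $\Cf(t)\to 0$ as $t\to 0$. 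Banach's fixed point theorem then gives existence \emph{and} uniqueness on a short time interval, and since \Cref{lemma:MaxRegSol} gives $T^*=\infty$ when $\G_1$ has nonzero measure, the solution is continued by patching. If you insist on a ``subtract two solutions'' proof, the viable norm is $L^1$, not $\Hm$: run Gronwall on $\|M_1-M_2\|_{L^1(\Om)}+\sum_j\|S_{1,j}-S_{2,j}\|_{L^1(\Om)}$ using the $L^1$-contraction estimates; but the contraction property itself, i.e.\ the content of \cite{otto1996l1}, is what replaces your monotonicity-plus-interpolation step, and it is the genuinely hard ingredient.
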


The proof of Theorem \ref{theo:ExistUnique} is based on a contraction argument which, along with the existence of solutions, guarantees uniqueness. However, a different argument based on Schauder's fixed  point theorem is required for proving the existence of solutions in the more general setting of Theorem \ref{theo:Existence}.

For the proof of these theorems, we initially focus on the first equation in \eqref{eq:weak}  for a given $\vec{S}$. In \Cref{sec:RegularizedProblem} we consider a non-degenerate approximation of \eqref{eq:M} and then discuss existence (\Cref{lemma:ExistRegSol}) and boundedness (\Cref{lemma:MaxRegSol}) of solutions. In \Cref{sec:Unreg}, we pass the regularization parameter to zero to show the existence of weak solutions of the original problem (\Cref{lemma:ExistUnRegSol}). We then consider the coupled system. In \Cref{sec:L1contraction}, the $L^1$ contraction principle (\Cref{lemma:L1cont}) is applied to prove \Cref{theo:ExistUnique}, and in \Cref{sec:Schauder}, a Schauder argument (\Cref{lemma:Schauder}) is used to prove \Cref{theo:Existence}.

We remark that Lemmas \ref{lemma:ExistRegSol}--\ref{lemma:Schauder} hold for all boundary conditions including homogeneous Neumann boundary condition. 
The proof of \Cref{lemma:ExistRegSol} is postponed to Section \ref{sec:Neumann}, due to complications arising from homogeneous Neumann condition.
Lemmas \ref{lemma:MaxRegSol}--\ref{lemma:Schauder} are proven in the general case.

\subsection{A regularized problem}\label{sec:RegularizedProblem}

We introduce the following regularization of the  Kirchhoff transform $\Phi$: for $\e>0$, let $\Phi_\e\in C^1(\R)$  be a non-degenerate approximation of $\Phi$ satisfying
\begin{align}\label{eq:propPhieps}
\e\leq {\Phi_\e}'\leq \e^{-1}\quad\text{ and }\quad  \lim\limits_{\e\to 0}\Phi_\e(m)= \Phi(m) \quad \text{ for all } m\in [0,1).
\end{align}
A specific choice of $\Phi_\eps$ that will be used in the sequel and in \Cref{sec:fronts} is
\begin{align}\label{eq:PhiepsDef}
\Phi_\e(m):=\int_0^m \min\left\{\max\{\e,D(\vr)\},\e^{-1}\right\}\, \dd \vr.
\end{align}
Then, recalling the functional spaces defined in \eqref{eq:Bochner},  the following lemma holds.

\begin{lemma}[Existence for a regularized problem]\label{lemma:ExistRegSol}
Let \ref{prop:D}--\ref{prop:BC} hold. 
Let $\vec{s}\in \Z$ be given and $\varepsilon\in(0,1)$ be sufficiently small. 
Then there exists a unique $M_{s,\e}\in \X+h^e_0$ which satisfies $M_{s,\e}(0)=M_0$, and for all  $\f\in L^2(0,T;\H)$,
\begin{align}\label{eq:reg}
\int_0^T \langle \f,\p_t M_{s,\e} \rangle + \int_0^T (\del \Phi_\e(M_{s,\e}),\del \f)= \int_0^T (f_0(M_{s,\e},\vec{s}),\f).
\end{align}
Moreover, $M_{s,\e}\in C([0,T];L^2(\Omega))$, and for all $t\in [0,T]$ we have
\begin{align}
&\|M_{s,\e}(t)\|^2 + \int_0^T \left[\|\del \Phi_\e(M_{s,\e})\|^2 + \|\p_t M_{s,\e}\|^2_{\Hm}\right]\nonumber\\
&\lesssim 1+\int_0^T \left(\|\vec{s}\|^2+\|\Phi_\e(M_{s,\e})\|^2\right)+\left(1+\|\Phi'_\e(M_{s,\e})\|_{L^\infty(Q)}\right)\| h^e_0\|^2_{H^1(\Om)}.\label{eq:apriori1}
\end{align}
Furthermore, if $M_0\in H^1(\Om)$ in \ref{prop:IC}, and $M_0|_{\G_1}=h_0$ in the trace sense then, in addition, it holds that
\begin{align}\label{eq:apriori2}
&\|\del \Phi_\e(M_{s,\e}(t))\|^2+ \int_0^T\int_{\Om} \frac{|\p_t \Phi(M_{s,\e})|^2}{\Phi'_\e(M_{s,\e})}\nonumber\\
&\lesssim \|\del \Phi_\e(M_0)\|^2 + \|\Phi'_\e(M_{s,\e})\|_{L^\infty(Q)}\left [1+ \int_0^T \|\vec{s}\|^2 \right ].
\end{align}
\end{lemma}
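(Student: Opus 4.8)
The plan is to establish Lemma~\ref{lemma:ExistRegSol} by first solving the regularized scalar equation~\eqref{eq:reg} via a standard Galerkin or monotone-operator approach, and then to derive the two a priori estimates~\eqref{eq:apriori1} and~\eqref{eq:apriori2} by testing the weak formulation with appropriate test functions. Since $\Phi_\e\in C^1(\R)$ satisfies $\e\leq \Phi_\e'\leq \e^{-1}$ by~\eqref{eq:propPhieps}, the nonlinearity $m\mapsto \Phi_\e(m)$ is strictly increasing, bi-Lipschitz, and non-degenerate, so~\eqref{eq:reg} is a uniformly parabolic quasilinear equation of the type treated in \cite{alt1983quasilinear}. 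I would first reduce to homogeneous boundary data by writing $M_{s,\e}=u+h^e_0$ with $u\in\X$, so that $u$ satisfies an equation with $\H$-valued test functions and a modified source term. Existence and uniqueness of $u\in\X$ with $u(0)=M_0-h^e_0$ then follows from the theory of monotone parabolic operators (the map $u\mapsto -\del\cdot[\del\Phi_\e(u+h^e_0)]$ is monotone, coercive, and hemicontinuous from $\X$ into its dual once the Lipschitz reaction term $f_0$ is handled by Gronwall), together with the embedding $\X\hookrightarrow C([0,T];L^2(\Om))$ noted in the excerpt giving the continuity in time and the attainment of the initial datum.

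For the first estimate~\eqref{eq:apriori1}, the natural test function is $\f=\Phi_\e(M_{s,\e})-\Phi_\e(h^e_0)$, which lies in $L^2(0,T;\H)$ because it vanishes on $\G_1$ in the trace sense. The parabolic term $\int_0^T\langle\f,\p_t M_{s,\e}\rangle$ is handled by introducing the convex primitive $\Psi_\e(m):=\int_0^m\bigl(\Phi_\e(\vr)-\Phi_\e(h^e_0)\bigr)\,\Phi_\e'(\vr)\,\dd\vr$ so that this term becomes a total time derivative plus controllable remainders, giving control of $\|M_{s,\e}(t)\|^2$ after using $\e\leq\Phi_\e'$ to compare $\Psi_\e$ with the $L^2$ norm; the diffusion term $\int_0^T(\del\Phi_\e(M_{s,\e}),\del\f)$ produces the good quantity $\|\del\Phi_\e(M_{s,\e})\|^2$ minus a cross term with $\del h^e_0$ that is absorbed by Young's inequality~\eqref{Eq:YoungsIneq}, generating the $\|h^e_0\|^2_{H^1(\Om)}$ contribution; the reaction term is bounded using the Lipschitz property~\ref{prop:fg1} of $f_0$. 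The bound on $\|\p_t M_{s,\e}\|_{\Hm}$ is recovered afterward by taking general $\f\in\H$ in~\eqref{eq:reg}, using the already-controlled $\del\Phi_\e(M_{s,\e})$ term and duality. A final application of Gronwall's Lemma~\eqref{eq:Gronwall} closes the estimate, with the factor $\|\Phi_\e'(M_{s,\e})\|_{L^\infty(Q)}$ appearing because the time-derivative contribution of the boundary extension must be controlled via $\Phi_\e'$.

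For the higher estimate~\eqref{eq:apriori2}, which requires the extra regularity $M_0\in H^1(\Om)$ with matching trace, the plan is to test with $\f=\p_t\Phi_\e(M_{s,\e})=\Phi_\e'(M_{s,\e})\p_t M_{s,\e}$; this is admissible since it vanishes on $\G_1$ once $M_{s,\e}=h_0$ there. The parabolic term then yields precisely $\int_0^T\int_\Om \Phi_\e'(M_{s,\e})\,|\p_t M_{s,\e}|^2=\int_0^T\int_\Om |\p_t\Phi(M_{s,\e})|^2/\Phi_\e'(M_{s,\e})$ after rewriting in terms of $\p_t\Phi$, which is the dissipation term on the left-hand side; the diffusion term $\int_0^T(\del\Phi_\e(M_{s,\e}),\del\p_t\Phi_\e(M_{s,\e}))$ integrates in time to $\tfrac12\|\del\Phi_\e(M_{s,\e}(t))\|^2-\tfrac12\|\del\Phi_\e(M_0)\|^2$, giving the gradient bound; and the reaction term is controlled by Young's inequality, absorbing half of the dissipation into the left side and producing the $\|\Phi_\e'(M_{s,\e})\|_{L^\infty(Q)}[1+\int_0^T\|\vec{s}\|^2]$ term on the right. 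I expect the main obstacle to be the rigorous justification of using $\p_t\Phi_\e(M_{s,\e})$ as a test function, since $M_{s,\e}$ only has $\Hm$-valued time derivative a priori; this must be done at the Galerkin (or time-discretized) level where the solution is smooth enough, deriving the estimate uniformly and then passing to the limit, and it is precisely this subtlety — together with the interplay with the homogeneous Neumann case — that the excerpt flags by postponing the proof to \Cref{sec:Neumann}.
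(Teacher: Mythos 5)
Your proposal has two genuine gaps, one in the existence step and one in the derivation of \eqref{eq:apriori1}.

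First, the monotonicity claim on which your existence argument rests is false. Pairing the difference of the operators $u\mapsto-\del\cdot[\del\Phi_\e(u+h^e_0)]$ at $u$ and $v$ with $u-v$ gives $\int_Q\bigl(\Phi_\e'(a)\,p-\Phi_\e'(b)\,q\bigr)\cdot(p-q)$, where $a=u+h^e_0$, $b=v+h^e_0$, $p=\del a$, $q=\del b$; taking $p=\lambda q$ with $1<\lambda<\Phi_\e'(b)/\Phi_\e'(a)$ the integrand equals $(\lambda\Phi_\e'(a)-\Phi_\e'(b))(\lambda-1)|q|^2<0$, so the operator is \emph{not} monotone from $L^2(0,T;\H)$ into $L^2(0,T;\Hm)$ whenever $\Phi_\e'$ is non-constant. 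Quasilinear operators with the nonlinearity inside the divergence are monotone only in the $H^{-1}$ sense, where the pairing reduces to $\int_\Om(\Phi_\e(u)-\Phi_\e(v))(u-v)\geq0$; hence Browder--Minty/Lions does not apply in the duality you invoke. This is precisely why the paper changes unknowns: it sets $u=\Phi_\e(M_{s,\e})$ and $\b_\e=\Phi_\e^{-1}$, so the equation becomes $\p_t\b_\e(u)=\D u+f_0(\b_\e(u),\vec{s})$ with the monotone nonlinearity sitting \emph{under the time derivative} (Alt--Luckhaus structure), and then runs a Rothe/backward-Euler scheme in which each time step is a monotone elliptic problem solved by the nonlinear Lax--Milgram lemma (\Cref{lemma:SemLinU}), the limit $\t\to0$ being taken by compactness of the time interpolates. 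Note also that your appeal to \cite{alt1983quasilinear} is unavailable here: that result excludes $\G_1=\emptyset$, and covering homogeneous Neumann conditions is the very reason the proof is postponed to \Cref{sec:Neumann}. (A Galerkin scheme combined with Aubin--Lions compactness to pass to the limit in $\Phi_\e$ would be a legitimate alternative, but that is not the argument you give.)

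Second, your derivation of \eqref{eq:apriori1} cannot produce an $\e$-uniform constant, which the statement requires: by \eqref{eq:lesssim}, $\lesssim$ hides constants independent of $\e$, and \eqref{eq:apriori1} is used exactly in the limit $\e\to0$ in \Cref{lemma:ExistUnRegSol}. Testing only with $\f=\Phi_\e(M_{s,\e})-\Phi_\e(h^e_0)$, the correct convex primitive of the parabolic term, $m\mapsto\int_{h^e_0}^m(\Phi_\e(\vr)-\Phi_\e(h^e_0))\,\dd\vr$, admits only the lower bound $\tfrac{\e}{2}|m-h^e_0|^2$ --- sharp near $m=0$ for the choice \eqref{eq:PhiepsDef}, where $\Phi_\e$ has slope exactly $\e$ --- so you obtain $\e\|M_{s,\e}(t)\|^2\lesssim\cdots$ rather than $\|M_{s,\e}(t)\|^2\lesssim\cdots$. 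The paper gets both left-hand terms of \eqref{eq:apriori1} by using \emph{two} test functions: $\f=M_{s,\e}$ (discretely $\z=\b_\e(u_n)$, Step 1 of \Cref{lemma:UnifromBoundUn}) for the $L^2$-in-space bound, whose parabolic term is $\tfrac12\p_t\|M_{s,\e}\|^2$ with no $\e$-factor, and $\f=\Phi_\e(M_{s,\e})$ (discretely $\z=u_n$, Step 2) for the gradient bound; you need to add the first of these. A minor related slip: your primitive $\int_0^m(\Phi_\e(\vr)-\Phi_\e(h^e_0))\Phi_\e'(\vr)\,\dd\vr$ carries a spurious factor $\Phi_\e'$, so its time derivative is not the integrand of your parabolic term. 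By contrast, your plan for \eqref{eq:apriori2} --- testing with $\p_t\Phi_\e(M_{s,\e})$ and justifying this at the discrete/Galerkin level --- is sound and coincides with the paper's argument (test $\z=u_n-u_{n-1}$ in \Cref{lemma:HigherReg}).
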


\noindent  The proof of \Cref{lemma:ExistRegSol} is postponed to \Cref{sec:ExistNeumann}.
For $\G_1$ having non-zero measure, the existence of $M_{s,\e}\in \X+h^e_0$ follows immediately from \cite{alt1983quasilinear} since ${\Phi_\e}'$ satisfies the uniform ellipticity condition. 
However, the result in \cite{alt1983quasilinear} does not cover the case of homogeneous Neumann conditions, i.e. the case $\G_1=\emptyset$. 

The assumption \ref{prop:fg2} is not required in \Cref{lemma:ExistRegSol}, but it is needed for the next result.

\begin{lemma}[Boundedness for the regularized problem]\label{lemma:MaxRegSol}
In addition to the hypothesis of Lemma \ref{lemma:ExistRegSol} we assume that \ref{prop:fg2} holds. 
Let $M_{s,\e}\in \X +h^e_0$ denote the solution in \Cref{lemma:ExistRegSol}. 
Moreover, let $\hat{M}\in C^1(\R^+)$ denote the solution of the integral equation
\[
\hat{M}(t)=\overline{M} + \int_0^t f_{\max}(\hat{M}(\vr))\,\dd\vr, \qquad t\in[0,T].
\]
\begin{enumerate}[label=(\alph*)]
\item Then, $0\leq M_{s,\e}(t)\leq \hat{M}(t)$ a.e. in $\Om$ for all $t\in [0,T]$.
\item If, in addition, $\G_1$ has non-zero measure, then a constant $\d\in (0,1)$ exists such that
\[
0\leq M_{s,\e}(t) \leq 1-\d \ \text{ a.e. in } \Om \text{ for all } t\in [0,T].
\]
\end{enumerate}
\end{lemma}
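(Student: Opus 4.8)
The plan is to establish the two bounds in \Cref{lemma:MaxRegSol} by constructing suitable supersolutions and subsolutions and invoking a comparison principle for the non-degenerate regularized equation \eqref{eq:reg}. Since $\Phi_\e'$ satisfies the uniform ellipticity condition $\e\leq \Phi_\e'\leq \e^{-1}$, Equation \eqref{eq:reg} is a genuinely non-degenerate quasilinear parabolic equation, so standard comparison arguments apply. For part (a), the lower bound $M_{s,\e}\geq 0$ should follow by testing the weak formulation with $\f=[M_{s,\e}]_-\in L^2(0,T;\H)$ (note $[M_0]_-=0$ and the Dirichlet data $h_0\geq \underline{M}\geq 0$, so the negative part vanishes on $\G_1$). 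Using the assumption $f_0(0,\vec{s})\geq 0$ from \ref{prop:fg1} together with the monotonicity of $\Phi_\e$, I expect the diffusion term to be non-negative and the reaction term to be controlled, yielding after a Gronwall argument that $\|[M_{s,\e}(t)]_-\|=0$ for all $t$.

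For the upper bound $M_{s,\e}\leq \hat M$ in part (a), the idea is that $\hat M$ is a spatially-constant supersolution: since $\hat M$ solves the ODE $\hat M' = f_{\max}(\hat M)$ with $\hat M(0)=\overline{M}\geq M_0$, and $f_0(\cdot,\vec{s})\leq f_{\max}(\cdot)$ by \ref{prop:fg2}, the difference $w:=M_{s,\e}-\hat M$ satisfies a parabolic inequality with $w(0)\leq 0$. Testing the equation for $w$ with $[w]_+$ and using the Lipschitz continuity of $f_0$ and $f_{\max}$ (locally, on the relevant bounded range) together with the monotonicity of $\Phi_\e$ to discard the diffusion term, I would derive $\tfrac{\dd}{\dd t}\|[w(t)]_+\|^2 \lesssim \|[w(t)]_+\|^2$ and conclude $[w]_+\equiv 0$ by Gronwall. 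One subtlety I must handle carefully is the boundary term: on $\G_1$ we have $M_{s,\e}=h_0\leq \overline{M}\leq \hat M(t)$, so $[w]_+$ vanishes on $\G_1$ in the trace sense and is therefore an admissible test function in $\H$; on $\G_2$ the homogeneous Neumann condition produces no boundary contribution.

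For part (b), the improved bound $M_{s,\e}\leq 1-\d$ with $\d\in(0,1)$ independent of $\e$ exploits the singularity of $\Phi$, which is where the structural assumption \eqref{eq:defPhi}, namely $\lim_{m\nearrow 1}\Phi(m)=\infty$, and the non-zero measure of $\G_1$ become essential. The strategy is to use $\Phi(M_{s,\e})$ (or rather its regularized analogue) as the natural variable and derive a uniform $L^2(0,T;H^1)$-type bound via the a priori estimate \eqref{eq:apriori1}, then translate an $L^\infty$ bound on $\Phi_\e(M_{s,\e})$ into an $L^\infty$ bound on $M_{s,\e}$ strictly below $1$. Concretely, I expect to test \eqref{eq:reg} with a truncation of $\Phi_\e(M_{s,\e})$ above a large level and use Stampacchia/De Giorgi iteration to show that $\Phi_\e(M_{s,\e})$ stays bounded by a constant $K$ independent of $\e$; since $\G_1$ has positive measure and $\Phi(h_0)$ is bounded there, the Dirichlet trace anchors the level sets and the Poincar\'e inequality on $\H$ is available. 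The $\e$-uniform $L^\infty$ bound $\Phi_\e(M_{s,\e})\leq K$ then yields $M_{s,\e}\leq \Phi^{-1}(K)=:1-\d<1$ in the limit, using \eqref{eq:propPhieps}.

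The main obstacle will be part (b): obtaining the $\e$-\emph{independent} strict bound below $1$. The comparison supersolution $\hat M$ from part (a) need not be below $1$ (it may even blow up), so the singularity of $\Phi$ must be invoked directly rather than through the reaction term. The delicate points are ensuring the Stampacchia iteration constants do not degenerate as $\e\to 0$ — which requires carefully using $\Phi_\e\to\Phi$ with $\Phi$ blowing up at $1$ so that high level sets of $M_{s,\e}$ are pushed into regions where $\Phi_\e$ is large — and correctly handling the inhomogeneous Dirichlet data $h_0$, which forces working with $\Phi_\e(M_{s,\e})-\Phi_\e(h_0^e)$ or a similar shifted quantity so that the test function lies in $\H$. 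I would pay particular attention to the fact that the constant $\d$ must depend only on the data $(\overline{M}, h_0, f_{\max}, D_0, \Om)$ and not on $\vec{s}$ or $\e$, which is what makes the subsequent passage to the limit and fixed-point argument work.
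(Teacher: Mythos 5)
Your part (a) coincides with the paper's own argument: test \eqref{eq:reg} with $[M_{s,\e}]_-$ for the lower bound and with $[M_{s,\e}-\hat{M}]_+$ for the upper bound, use \ref{prop:fg1}, \ref{prop:fg2} and the monotonicity of $\Phi_\e$ to discard the diffusion term, and close with Gronwall; your handling of the trace on $\G_1$ is also correct. That part stands.

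Part (b) is where there is a genuine gap, and it sits exactly at the point you yourself flag as delicate. Write $u_\e=\Phi_\e(M_{s,\e})$ and $\b_\e=\Phi_\e^{-1}$. Testing \eqref{eq:reg} with a truncation $[u_\e-k]_+$ produces the time term $\int_\Om \mathcal{B}_k(u_\e(t))$, where $\mathcal{B}_k(v)=\int_k^v[\s-k]_+\,\b_\e'(\s)\,\dd\s$. Since $\b_\e'$ equals $\e$ precisely in the range where $\Phi_\e$ caps $D_0$ at $\e^{-1}$ --- that is, precisely where $M_{s,\e}$ is close to $1$, the region the argument must control --- the only available coercivity is $\mathcal{B}_k(v)\ge\tfrac{\e}{2}[v-k]_+^2$. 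Hence the sup-in-time $L^2$ energy that any parabolic De Giorgi/Stampacchia iteration relies on degenerates as $\e\to 0$; the constants in your iteration are not $\e$-uniform. If instead you drop the (nonnegative) time term and run a purely elliptic level-set argument in the cylinder $Q$, setting $\psi(k)=\int_0^T\!\!\int_\Om[u_\e-k]_+$, the spatial Sobolev/Poincar\'e inequality on $\H$ only yields an estimate of the form $\psi(k)\le -C\,\psi'(k)$, i.e.\ exponential decay of $\psi$ in $k$, never vanishing at a finite level: without sup-in-time control there is no exponent $\g>1$ in the Stampacchia recursion, so the iteration does not close and no finite bound on $u_\e$ results. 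There is also a secondary circularity: you propose to start from \eqref{eq:apriori1}, but its right-hand side carries the factor $\|\Phi'_\e(M_{s,\e})\|_{L^\infty(Q)}\le \e^{-1}$, which becomes $\e$-uniform only \emph{after} the bound $M_{s,\e}\le 1-\d$ is known.

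The paper avoids this obstruction by never iterating on the parabolic problem at all. It builds a time-independent supersolution: let $\hat u\in\H$ solve $(\del\hat u,\del\f)=(\hat C,\f)$ for all $\f\in\H$, with the constant $\hat C=\max_{0\le t\le T}f_{\max}(\hat M(t))$. The Stampacchia level-set argument is then carried out on this \emph{elliptic} problem, in which neither $\e$ nor the time variable appears, giving $\hat u\in L^\infty(\Om)$ (via the Hartman--Stampacchia lemma; this is where $\G_1$ having positive measure enters, through the Sobolev inequality on $\H$). The comparison principle for elliptic-parabolic equations of Otto then gives $\Phi_\e(M_{s,\e}(t))\le \hat u+\overline{M}$ a.e.\ in $\Om$, uniformly in $\e$ and $t$, and the singularity \eqref{eq:defPhi} converts this into $M_{s,\e}\le 1-\d$ with $\d$ depending only on the data. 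If you wish to keep your level-set philosophy, the repair is to transfer it to such a stationary comparison function; as proposed, the parabolic iteration step would fail.
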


Observe that the above lemma implies that
$ M_{s,\e}\in L^\infty(0,T;L^\infty(\Om))$ 
and the family $ M_{s,\e}$ is uniformly bounded with respect to  $\e>0$ in $L^\infty(0,T;L^\infty(\Om))$.

\begin{proof} The existence of $\hat{M}$ follows from the Picard-Lindel\"of Theorem since $f_{\max}\in \mathrm{Lip}(\R)$. Moreover, it satisfies $\p_t \hat{M}= f_{\max}(\hat{M})$ and therefore, $\hat{M}\geq \overline{M}$ since $f_{\max}$ was assumed to be non-negative in \ref{prop:fg2}.

\textbf{(Step 1) $\mathbf{M_{s,\e}\geq 0}$:} Inserting the test function $\f=[M_{s,\e}]_-$ in \eqref{eq:reg} implies that
$$
\int_0^T \left [ \p_t\left ( \tfrac{1}{2}\|[M_{s,\e}]_-\|^2\right ) + \e\|\del [M_{s,\e}]_-\|^2 \right ]\overset{\ref{prop:fg1}}\leq C_L\int_0^T \|[M_{s,\e}]_-\|^2.
$$
Since $[M_{s,\e}(0)]_-=[M_0]_-=0$, we have $\|[M_{s,\e}]_-(T)\|=0$ using Gronwall's Lemma \eqref{eq:Gronwall}. 

\vspace{.5em}
 \textbf{(Step 2) $\mathbf{M_{s,\e}\leq \hat{M}}$:} Inserting the test function $\f=[M_{s,\e}-\hat{M}]_+\in L^2(0,T;\H)$ in \eqref{eq:reg} we obtain 
\begin{subequations}\label{eq:MaxPrinIneq}
\begin{align}
&\int_0^T \langle [M_{s,\e}-\hat{M}]_+, \p_t M_{s,\e} \rangle =\int_0^T \langle [M_{s,\e}-\hat{M}]_+,\p_t [M_{s,\e}-\hat{M}] \rangle + \int_0^T \langle [M_{s,\e}-\hat{M}]_+,\p_t \hat{M} \rangle\nonumber\\
& \qquad\, =\int_0^T  \p_t \left (\frac{1}{2}\|[M_{s,\e}-\hat{M}]_+\|^2 \right ) + \int_0^T ( \p_t \hat{M},[M_{s,\e}-\hat{M}]_+ )\nonumber\\
& \qquad \overset{\ref{prop:IC}}= \frac{1}{2}\|[M_{s,\e}-\hat{M}]_+(T)\|^2  + \int_0^T ( \p_t \hat{M},[M_{s,\e}-\hat{M}]_+ ),\\
&\int_0^T (\del \Phi_\e(M_{s,\e}),\del [M_{s,\e}-\hat{M}]_+) =  \int_0^T ( {\Phi_\e}'(M_{s,\e})\del M_{s,\e},\del [M_{s,\e}-\hat{M}]_+)\geq 0,
\end{align}
and for the reaction term we obtain 
\begin{align}
&\int_0^T (f_0(M_{s,\e},\vec{s}),[M_{s,\e}-\hat{M}]_+)=\int_0^T (f_0(M_{s,\e},\vec{s}) - f_0(\hat{M},\vec{s}) + f_0(\hat{M},\vec{s}),[M_{s,\e}-\hat{M}]_+)\nonumber\\
\qquad &\overset{\ref{prop:fg1},\ref{prop:fg2}}\leq C_L \int_0^T \|[M_{s,\e}-\hat{M}]_+\|^2 + \int_0^T f_{\max}(\hat{M}) [M_{s,\e}-\hat{M}]_+.
\end{align}
\end{subequations}
Combining the estimates in \eqref{eq:MaxPrinIneq} it follows that
\begin{align}
\frac{1}{2}\|[M_{s,\e}-\hat{M}]_+(T)\|^2  + \int_0^T ( \p_t \hat{M} -f_{\max}(\hat{M}),[M_{s,\e}-\hat{M}]_+ )\leq C_L \int_0^T \|[M_{s,\e}-\hat{M}]_+\|^2.
\end{align}
The second term is zero by the definition of $\hat{M}$. Hence, using Gronwall's Lemma \eqref{eq:Gronwall} we have the result.

 \textbf{(Step 3) $\mathbf{M_{s,\e}\leq 1-\d}$:} This is a generalization of Proposition 6 in \cite{efendiev2009existence} to the case of mixed or homogeneous Neumann boundary conditions, see also the proof of Theorem 2.7 in \cite{muller2022well}. 
 For $f_{\max}(\cdot)$ introduced in \ref{prop:fg2}, let $\hat{u} \in \H$  solve the elliptic problem
 \begin{align}
 (\del \hat{u},\del \f) =( \hat{C},\f)\quad \text{for all}\ \f\in \H,\quad 
  \text{where } \hat{C}:=\max_{0\leq t\leq T}f_{\max}(\hat{M}(t)).
 \end{align}
 The existence of a unique weak solution $\hat{u}$ directly follows from the Lax-Milgram Lemma. If $d=1$ (one space-dimension), then we immediately have  $\hat{u}\in L^\infty(\Om)$ from Morrey's inequality \cite[Chapter 5]{evans1988partial}. Hence, let $d\geq 2$. Set $q=2d/(d-2)$ for $d>2$, and $q>2$ for $d=2$. Then for $m\geq 0$, inserting the test function $\f=[\hat{u}-m]_+\in \H$ and denoting $A(m):=\{\bm{x}\in \Om: \hat{u}(\bm{x})>m\}$  we have the estimates,
\begin{align*}
\|\del [\hat{u}-m]_+\|^2&\leq \hat{C} \,\|[\hat{u}-m]_+\|_{L^1(\Om)},\\
 \|[\hat{u}-m]_+\|_{L^1(\Om)}&\leq |A(m)|^{1-\frac{1}{q}}\|[\hat{u}-m]_+\|_{L^q(\Om)}\leq C' |A(m)|^{1-\frac{1}{q}}\|\del [\hat{u}-m]_+\|,
\end{align*}
where the last inequality follows from the Sobolev inequality \cite[Chapter 5]{evans1988partial}. Hence, we have $\|[\hat{u}-m]_+\|_{L^1(\Om)}\leq {C'}^2 \hat{C} |A(m)|^\g$, where $\g=2-\frac{2}{q}>1$.
Thus, following the steps of \cite[Lemma 7.3]{hartman1966some} we conclude that $\hat{u}\in L^\infty(\Om)$. 
Hence, using the comparison principle \cite{otto1996l1}, one has $\Phi_\e(M_{s,\e}(t))\leq \hat{u} + \bar{M}<\infty$ a.e. in $\Om$ for all $\e>0$ and $t>0$, which concludes the proof.
\end{proof}

\begin{remark}[Generalization to $\vec{s}\in (L^2(Q))^k$]\label{rem:SinL2Q}
Although \Cref{lemma:ExistRegSol,lemma:MaxRegSol}, and the following \Cref{lemma:ExistUnRegSol}, assume $\vec{s}\in \Z$ to simplify the presentation, the results remain valid for all $\vec{s}\in (L^2(Q))^k$ as evident from the a-priori estimates \eqref{eq:apriori1}--\eqref{eq:apriori2}. This observation will become important in \Cref{lemma:Schauder} which provides the setting for the proof of  \Cref{theo:Existence}.
\end{remark}

\subsection{Existence of solutions of the degenerate parabolic problem}\label{sec:Unreg}

\begin{lemma}[Existence for the degenerate problem]\label{lemma:ExistUnRegSol}
Let \ref{prop:D}--\ref{prop:BC} and \ref{prop:fg2} hold. 
Let $\vec{s}\in\Z$ be given and let $0<T^*\leq\infty$ denote the time, independent of $\vec{s}$ and $\e>0$, such that the solutions $M_{s,\e}\in \X+h^e_0$ of \eqref{eq:reg} remain bounded in $[0,1)$ for all $t< T^*$. Let $T<T^*$. Then there exists a unique $M_s\in \W$ with $\Phi(M_s)\in L^2(0,T;H^1(\Om))$ satisfying $M_s(0)=M_0$, $\Phi(M_s)=\Phi(h_0)$ on $\G_1$ in the trace sense, and 
\begin{align}\label{eq:unreg}
\int_0^T \langle \f,\p_t M_s \rangle + \int_0^T (\del \Phi(M_s),\del \f)= \int_0^T (f_0(M_s,\vec{s}),\f),
\end{align}
for all $\f\in L^2(0,T;\H)$. Moreover, $0\leq M_s<1-\d$ a.e. in $Q$ for some constant $\d\in (0,1)$.
\end{lemma}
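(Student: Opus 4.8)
The plan is to obtain $M_s$ as the limit of the regularized solutions $M_{s,\e}$ from \Cref{lemma:ExistRegSol} as $\e\to 0$, using the uniform estimates of \Cref{lemma:ExistRegSol,lemma:MaxRegSol}. Fix $T<T^*$. By \Cref{lemma:MaxRegSol}(a) and the definition of $T^*$ there is $\d\in(0,1)$, independent of $\e$ and $\vec{s}$, with $0\le M_{s,\e}\le 1-\d$ a.e.\ in $Q$. Since $D_0$ is continuous, hence bounded, on $[0,1-\d]$, the choice \eqref{eq:PhiepsDef} gives for small $\e$ the uniform bounds $\Phi_\e'(M_{s,\e})\le C_\d$ and $\Phi_\e(M_{s,\e})\le \Phi_\e(1-\d)\le C_\d'$. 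Plugging these into \eqref{eq:apriori1} makes every term on the right-hand side uniformly bounded (the term $\|\Phi_\e(M_{s,\e})\|^2$ being absorbed via Gronwall's Lemma), so that $\{M_{s,\e}\}$ is bounded in $L^\infty(Q)$, $\{\Phi_\e(M_{s,\e})\}$ in $L^2(0,T;H^1(\Om))$, and $\{\p_t M_{s,\e}\}$ in $L^2(0,T;\Hm)$, all uniformly in $\e$.

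Along a subsequence I would then extract $M_{s,\e}\rightharpoonup M_s$ weakly-$*$ in $L^\infty(Q)$, $\p_t M_{s,\e}\rightharpoonup \p_t M_s$ weakly in $L^2(0,T;\Hm)$, and $\Phi_\e(M_{s,\e})\rightharpoonup \chi$ weakly in $L^2(0,T;H^1(\Om))$ for some limits $M_s$ and $\chi$. The heart of the argument is to identify $\chi=\Phi(M_s)$ and to pass to the limit in the nonlinear terms, which requires upgrading the convergence of $\Phi_\e(M_{s,\e})$ from weak to strong.

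This strong compactness is the main obstacle: $M_{s,\e}$ is controlled only in $L^\infty$ in space (no spatial-gradient bound on $M_{s,\e}$ itself), so Aubin--Lions cannot be applied to $M_{s,\e}$ directly. Instead I would derive an equicontinuity-in-time estimate for $v_\e:=\Phi_\e(M_{s,\e})$. Monotonicity of $\Phi_\e$ together with the uniform Lipschitz bound $\Phi_\e'\le C_\d$ on $[0,1-\d]$ yields the pointwise inequality $|\Phi_\e(a)-\Phi_\e(b)|^2\le C_\d\,(a-b)(\Phi_\e(a)-\Phi_\e(b))$ for $a,b\in[0,1-\d]$; integrating the time translate $\int_0^{T-h}(M_{s,\e}(\cdot+h)-M_{s,\e},\,v_\e(\cdot+h)-v_\e)$ and bounding it through $\p_t M_{s,\e}\in L^2(0,T;\Hm)$ and $v_\e\in L^2(0,T;\H)$ (note $v_\e(\cdot+h)-v_\e\in\H$, since the $\G_1$-trace $\Phi_\e(h_0)$ is time independent) gives $\int_0^{T-h}\|v_\e(\cdot+h)-v_\e\|^2\,\dd t\le C h$ uniformly in $\e$. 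Combined with the spatial $H^1$-bound (Rellich) this yields relative compactness of $\{v_\e\}$ in $L^2(Q)$ by the Aubin--Lions--Simon compactness theorem, whence $v_\e\to\chi$ strongly in $L^2(Q)$ and a.e. Since $\Phi_\e\to\Phi$ uniformly on $[0,1-\d]$ and $\Phi$ is a continuous, strictly increasing bijection of $[0,1-\d]$ onto $[0,\Phi(1-\d)]$ (because $\Phi'=D_0>0$ on $(0,1)$ by \ref{prop:D}), one identifies $M_{s,\e}\to \Phi^{-1}(\chi)=:M_s$ a.e., so that $\chi=\Phi(M_s)\in L^2(0,T;H^1(\Om))$ and $0\le M_s\le 1-\d$ a.e.\ in $Q$.

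With these convergences the passage to the limit in \eqref{eq:reg} is routine: the two linear terms converge by the weak convergences of $\p_t M_{s,\e}$ and $\del\Phi_\e(M_{s,\e})$, while $f_0(M_{s,\e},\vec{s})\to f_0(M_s,\vec{s})$ strongly in $L^2(Q)$ by the Lipschitz continuity \ref{prop:fg1} and dominated convergence, so $M_s$ satisfies \eqref{eq:unreg} for all $\f\in L^2(0,T;\H)$. The initial datum $M_s(0)=M_0$ is inherited from $M_{s,\e}(0)=M_0$ together with the convergence of $\p_t M_{s,\e}$ (integrating by parts in time against test functions vanishing at $t=T$), and $\Phi(M_s)=\Phi(h_0)$ on $\G_1$ follows from weak convergence of the traces of $v_\e$ and $\Phi_\e(h_0)\to\Phi(h_0)$. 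For membership $M_s\in\W$, the $L^\infty(Q)$ and $H^1(0,T;\Hm)$ parts are already in hand, and the continuity $M_s\in C([0,T];L^2(\Om))$ follows from the standard continuity result for degenerate parabolic equations with $\Phi(M_s)\in L^2(0,T;H^1(\Om))$ and $\p_t M_s\in L^2(0,T;\Hm)$ (as in \cite{alt1983quasilinear}). Finally, uniqueness of $M_s$ for the fixed datum $\vec{s}$ follows from the $L^1$-contraction / comparison principle (\Cref{lemma:L1cont} and \cite{otto1996l1}), which in particular shows that the entire family $M_{s,\e}$, and not merely a subsequence, converges to $M_s$.
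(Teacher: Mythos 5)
Your proof is correct in substance, but it takes a genuinely different route from the paper's. The paper argues in two steps: it first assumes $M_0\in H^1(\Om)$, so that the higher-regularity estimate \eqref{eq:apriori2} applies and bounds $\Phi_\e(M_{s,\e})$ uniformly in $\Y\subset H^1(Q)$; compactness then comes from $H^1(Q)\hookrightarrow\hookrightarrow L^2(Q)$, and the limit is identified via an auxiliary convex function $\eta$ (with $\eta=\Phi$ near $0$ and $\eta\circ\Phi^{-1}$ Lipschitz) combined with Jensen's inequality and super-additivity. A general $M_0\in L^\infty(\Om)$ is then treated in a second step by approximating it in $L^1(\Om)$ with $H^1$ data and invoking the $L^1$-contraction of \cite{otto1996l1,andreianov2004uniqueness}. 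You instead handle general $M_0$ in one pass: compactness comes from the time-translate estimate $\int_0^{T-h}\|v_\e(\cdot+h)-v_\e\|^2\,\dd t\lesssim h$, which is legitimate since $v_\e(\cdot+h)-v_\e\in\H$ and \eqref{eq:apriori1} alone (after the uniform bound $M_{s,\e}\le 1-\d$) controls $\p_t M_{s,\e}$ in $L^2(0,T;\Hm)$ and $v_\e$ in $L^2(0,T;H^1(\Om))$; Simon's theorem \cite{simon1986compact} then gives strong $L^2(Q)$ convergence, and you identify the limit through a.e.\ convergence plus continuity of $\Phi^{-1}$ on $[0,\Phi(1-\d)]$, replacing the paper's $\eta$-trick. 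Your route avoids both the $H^1$-data reduction and the approximation step, and is arguably shorter; the paper's route yields the stronger weak-$H^1(Q)$ convergence of $\Phi_\e(M_{s,\e})$ (cited again in Section \ref{sec:fronts}) and stays within standard Sobolev embeddings.

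Two minor caveats. First, you dispatch $M_s\in C([0,T];L^2(\Om))$ by citation; this is where the paper works hardest, proving strong $L^2$-continuity by hand. The Alt--Luckhaus-type continuity statement is naturally in $C([0,T];L^1(\Om))$, so you should note explicitly that the uniform $L^\infty$ bound upgrades it to $L^2$. Second, invoking \Cref{lemma:L1cont} for uniqueness is mildly circular, since that lemma is proved after, and on top of, the present one; the correct reference is Otto's $L^1$-contraction \cite{otto1996l1} (and \cite{andreianov2004uniqueness} for the Neumann case) applied directly to \eqref{eq:unreg} with $\vec{s}$ fixed, which is what the paper does.
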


\begin{proof} In this proof, we will first assume that
\begin{align}
 M_0\in H^1(\Om).\label{eq:M0inH1}
\end{align}
This constraint will later be dropped. \Cref{lemma:MaxRegSol} and the assumption $T<T^*$, imply the existence of $\bar{\d}\in (0,1)$ such that 
\begin{align}\label{eq:MPhiLess1}
M_{s,\e}\in [0,1-\bar{\d}],\ \  \text{and consequently, } \ \ \phi_\e:=\Phi_{\e}(M_{s,\e})\in [0,\Phi(1-\bar{\d})] \  \text{ a.e. in } Q,
\end{align}
for small $\e>0$. The shorthand $\phi_\e$ will be used to denote $\Phi_{\e}(M_{s,\e})$ for the rest of the proof.

\textbf{(Step 1) Convergence of $\mathbf{M_{s,\e}}$ and $\mathbf{\phi_\e}$, assuming \eqref{eq:M0inH1}:} Taking \eqref{eq:MPhiLess1} into account which implies that ${\Phi_\e}'(M_{s,\e})$ is bounded above independent of $\e$, we conclude from \eqref{eq:apriori2} in \Cref{lemma:ExistRegSol}
 that $\phi_\e$ is uniformly bounded in $\Y$ (see \eqref{eq:Bochner}).
Observe that $\Y\subset H^1(Q)$. 
 Using the compact embedding $H^1(Q)\hookrightarrow \hookrightarrow L^2(Q)$, we conclude that there exists $\phi\in H^1(Q)$ and a subsequence $\phi_\e$ such that for $\e\to 0$,
\begin{subequations}\label{eq:convergencePhiM}
\begin{align}
& \phi_\e \rightharpoonup \phi \text{ weakly in } H^1(Q),\\
& \phi_\e \to \phi \text{ strongly in } L^2(Q).\label{eq:convergencePhiL2}
\end{align}
\end{subequations}
Setting 
\begin{align}\label{eq:MsLinfLinf}
M_s:=\Phi^{-1}(\phi)\in L^\infty(0,T;L^\infty(\Om))
\end{align}
we claim that for $\e\to 0$,
\begin{subequations}\label{eq:convergenceMse}
\begin{align}
& \p_t M_{s,\e} \rightharpoonup \p_t M_s \text{ weakly in } L^2(0,T;\Hm(\Om)),\label{eq:dtMse2dtms}\\
& M_{s,\e}\to M_s \text{ strongly in } L^2(Q).\label{eq:Mse2ms}
\end{align}
\end{subequations}
To see this, we consider a convex strictly increasing function $\eta\in C^1([0,1))$ such that
\begin{align}
\eta=\Phi \text{ in } [0,\epsilon_0),\quad \text{ and }\quad  \eta\circ\Phi^{-1}\in \mathrm{Lip}(\R^+),\label{eq:cond_eta}
\end{align}
where $\epsilon_0\in(0,1)$ was fixed in \ref{prop:D}. 
Recall that $D_0$ is strictly increasing in $[0,\epsilon_0)$, and therefore, $\Phi$ is convex in $[0,\epsilon_0)$. For $\vr>\epsilon_0$, $\Phi'(\vr)=D_0(\vr)$ is bounded away from 0, implying that $(\Phi^{-1})'(\vr)$ is bounded for $\vr>\Phi(\epsilon_0)$. Hence, it is always possible to find such a function $\eta$. 

Using \eqref{eq:MPhiLess1} and that $\eta$ is strictly increasing and convex, we obtain 
\begin{align*}
&\eta\left(\frac{1}{|Q|}\int_{Q} |M_{s,\e}-M_s|\right)
\overset{\eqref{eq:ConvexFuncIneq1}}\leq \frac{1}{|Q|}\int_{Q} \eta(|M_{s,\e}-M_s|)\overset{\eqref{eq:ConvexFuncIneq2}}
\leq \frac{1}{|Q|}\int_{Q} |\eta(M_{s,\e})-\eta(M_s)|\\
\quad \overset{\eqref{eq:cond_eta}}\lesssim & \|\Phi(M_{s,\e})-\Phi(M_s)\|_{L^1(Q)} \leq \|\Phi(M_{s,\e})-\phi_\e\|_{L^1(Q)} +\|\phi_\e-\Phi(M_s)\|_{L^1(Q)}\\
\quad \leq \;\, & \|\Phi(M_{s,\e})-\Phi_\e(M_{s,\e})\|_{L^1(Q)} +\|\phi_\e-\phi\|_{L^1(Q)}\overset{\eqref{eq:propPhieps},\eqref{eq:convergencePhiL2}}\longrightarrow 0,\quad \text{ for } \e\searrow 0.
\end{align*}
More specifically, the term $\|\Phi(M_{s,\e})-\Phi_\e(M_{s,\e})\|_{L^1(Q)}$ vanishes due to \eqref{eq:PhiepsDef} and \eqref{eq:MPhiLess1} since  for small $\e$ we observe that 
\begin{align*}
   | \Phi(M_{s,\e})-\Phi_\e(M_{s,\e})|\overset{\eqref{eq:PhiepsDef}, \eqref{eq:MPhiLess1}}= \left|\int_0^{M_{s,\e}} (D(\vr)-\max\{\e,D(\vr)\})\,\dd \vr \right|\leq C \e.
\end{align*}
Then, the continuity of $\eta$ implies that $M_{s,\e} \to M_s$ in $L^1(Q)$. Using \eqref{eq:MPhiLess1} and \eqref{eq:MsLinfLinf}, the convergence also holds in $L^2(Q)$.

Since $M_{s,\e}-h^e_0\in \X$ (see \eqref{eq:Bochner}), \eqref{eq:MPhiLess1} and $(M_{s,\e}-h^e_0)\in C([0,T];L^2(\Om)) $ from \Cref{lemma:ExistUnRegSol} also imply that $M_{s,\e}\in \W$. 
It follows from estimate \eqref{eq:apriori1} in \Cref{lemma:ExistRegSol} and \eqref{eq:MPhiLess1} that $M_{s,\e}$ is bounded in $\W$, and the bound is independent of $\e$. Hence, $M_{s,\e}$ has a weak limit in $H^1(0,T;\Hm)\cap  L^2(Q)\supset \W$. The strong convergence $M_{s,\e}\to M_s$ in $L^2(Q)$ then implies by the uniqueness of weak limits that \eqref{eq:convergenceMse} holds.

Next we show that $M_s\in \W$, i.e. it remains to show that $M_s\in C([0,T];L^2(\Om))$. From the uniform boundedness of $M_{s,\e}$ in $\W$ we conclude that $M_{s,\e}(t)\rightharpoonup M_s(t)$ weakly in $\Hm$ for almost all $t\in [0,T]$ since $(M_{s,\e}(t)-M_{s}(t),\Tilde{\f})=\int_0^t \langle \p_t(M_{s,\e}-M_{s}),\Tilde{\f}\rangle \to 0$ for all $\Tilde{\f}\in \H$, see \eqref{eq:dtMse2dtms}. Hence, for all $\f\in L^2(\Om)$ one has
\begin{align}\label{ext_lem_ex}
    |(M_{s,\e}(t)-M_s(t),\f)|&= |(M_{s,\e}(t)-M_s(t),\f-\Tilde{\f}) + (M_{s,\e}(t)-M_s(t),\Tilde{\f})|\nonumber\\
    &\leq (\|M_{s,\e}(t)\|+ \|M_s(t)\|)\|\f-\Tilde{\f}\|  + |(M_{s,\e}(t)-M_s(t),\Tilde{\f})|.
\end{align}
Let $\mu>0$ be arbitrary. Since $M_{s,\e}(t)$ and $M_s(t)$ are uniformly bounded in $L^\infty(\Om)$ by Lemma  \ref{lemma:MaxRegSol} and \eqref{eq:MsLinfLinf}, one can choose $\tilde{\f}\in \H$ such that the first term on the right hand side in \eqref{ext_lem_ex}
is less than $\mu/2$. Then, we choose $\e$ small enough such that the second term is less than $\mu/2$ implying that 
\begin{align}
    M_{s,\e}(t)\rightharpoonup M_s(t) \text{ weakly in }  L^2(\Omega) \text{ for almost all } t\in [0,T].
\end{align}
Finally to show the continuity of $M_s$ in time, we observe that for $\t>0$ one has
\begin{align*}
&\|M_s(t+\t)-M_s(t)\|^2=(M_s(t+\t)-M_{s,\e}(t+\t),M_s(t+\t)-M_s(t))\\
&\quad + (M_{s,\e}(t+\t)-M_{s,\e}(t),M_s(t+\t)-M_s(t))+ (M_{s,\e}(t)-M_{s}(t),M_s(t+\t)-M_s(t)).
\end{align*}
The first and last term on the right side are arbitrarily small for all sufficiently small $\e$ due to the weak convergence of $M_{s,\e}(t)$ in $L^2(\Omega)$. The term in the middle vanishes as $\t$ tends to zero since $M_{s,\e}\in C([0,T];L^2(\Om))$. This proves that $M_s\in  C([0,T];L^2(\Om))$.

Using \eqref{eq:convergencePhiM},\eqref{eq:convergenceMse} we can now pass to the limit in \eqref{eq:reg} and conclude that $M_s$ is a solution of \eqref{eq:unreg}. This completes the proof for the case $M_0\in H^1(\Om)$.

\textbf{(Step 2) Existence for $\mathbf{M_0\in L^1(\Om)}$ satisfying \ref{prop:IC}: } We postulate that for a given $M_0\in L^1(\Om)$ and $\mu>0$, there exists $M_{0}^\mu\in H^1(\Om)$ such that 
\begin{align}\label{eq:M0mu}
0\leq M_0^\mu \leq \bar{M}<1 \text{ a.e. in } Q, \text{ and } \|M_0^\mu-M_0\|_{L^1(\Om)}\leq \mu.
\end{align}
The existence of $\tilde{M}_0^\mu\in C^\infty_c(\R^d)$ such that  $\|\tilde{M}_0^\mu-M_0\|_{L^1(\Om)}\leq \tfrac{1}{2}\mu$ 
follows from the fact that $C^\infty_c(\R^d)$ is dense in $L^1(\R^d)$, see
Theorem 4.3 in \cite{brezis2011functional}. 
Define $M_0^\mu=\min\{\overline{M},\tilde{M}_0^\mu\}\in H^1(\Om)$, where $\overline{M}$ was defined in \ref{prop:IC}. Then $M_0^\mu$ satisfies \eqref{eq:M0mu} since
\begin{align*}
 \|M_0^\mu-M_0\|_{L^1(\Om)}&
 \leq \|M_0^\mu-\tilde{M}^\mu_0\|_{L^1(\Om)} + \|\tilde{M}_0^\mu-M_0\|_{L^1(\Om)}\\
&= \|[\tilde{M}_0^\mu-\bar{M}]_+\|_{L^1(\Om)} + \|\tilde{M}_0^\mu-M_0\|_{L^1(\Om)}\\
&\overset{\ref{prop:IC}}\leq  \|\tilde{M}_0^\mu-M_0\|_{L^1(\Om)} + \|\tilde{M}_0^\mu-M_0\|_{L^1(\Om)}\leq \mu.
\end{align*}
Now, let $M_s^\mu\in \W$ be the weak solution 
corresponding to the initial data $M^\mu_s(0)=M^\mu_0\in H^1(\Omega)$, $\mu>0$, which exists by 
Step 1. Consider a sequence $\{\mu_n\}_{n\in\N}\subset \R^+$ converging to zero. Then, the $L^1$-contraction result in \cite{otto1996l1} implies that 
there exists a constant $C>0$ independent of $\mu$ such that
\begin{align}
\|(M_{s}^{\mu_m}-M_{s}^{\mu_n})(t)\|_{L^1(\Om)}\leq C \|M_{0}^{\mu_m}-M_{0}^{\mu_n}\|_{L^1(\Om)}
\end{align}
for all  $m,n\in \N$, $t\in [0,T]$.
Note that an $L^1$-contraction result also holds for homogeneous Neumann boundary conditions, see \cite{andreianov2004uniqueness}.
Hence, $\{M_s^{\mu_n}\}_{n\in\N}$ is a Cauchy sequence in $L^1(\Om)$, and since $M_s^{\mu_n}\in L^\infty(\Om)$ is uniformly bounded with respect to $\mu_n$ (see \Cref{lemma:MaxRegSol}), it is also a Cauchy sequence in $L^2(\Om)$. Since $M_s^{\mu_n}\in C([0,T];L^2(\Om))$, we conclude that there exists $M_s\in C([0,T];L^2(\Om))$ such that  
\begin{align*}
\|M^{\mu_n}_{s}-M_s\|_{C([0,T];L^2(\Om))}\to 0\quad \text{ as } n\to \infty.
\end{align*}
The uniform boundedness of $M^{\mu_n}_{s}$ in $\W$ and $\Phi(M^{\mu_n}_{s})$ in $L^2(0,T;H^1(\Om))$ follow directly from \eqref{eq:apriori1}, \Cref{lemma:ExistRegSol}. The strong convergence of $M^{\mu_n}_{s}\to M_s$ and its uniform $L^\infty$-boundedness away from 1 (the singular point of $\Phi$), implies that $\Phi(M^{\mu_n}_{s})$ also converges strongly to $\Phi(M_s)$ in $L^2(\Om)$ for all $t\in [0,T]$.
Hence, similar to before, passing the limit $n\to \infty$ it follows that $M_s$ solves \eqref{eq:unreg}.  
\end{proof}

\subsection{A contraction argument for proving \Cref{theo:ExistUnique} }\label{sec:L1contraction}

We first show the existence of a unique weak solution 
under the additional assumptions stated in  \Cref{theo:ExistUnique} compared to \Cref{theo:Existence}.
The assumptions in \Cref{theo:ExistUnique} demand that $D_j$ depends only on $S_j$, i.e. $D_j:\R\to [D_{\min},D_{\max}]$. This is unless $\nu_j=0$ in which case we can also define $D_j$ as such. Hence, similar to \eqref{eq:defPhi}, we introduce the function
\begin{align}\label{eq:DefPhiJ}
\Phi_j(S):=\int_0^S D_j(\vr)\, \dd\vr, \qquad \text{for } j\in \{1,\dots,k\}.
\end{align}
Observe that due to \ref{prop:Df}, $\Phi_j$ is Lipschitz continuous and strictly increasing.

For a given $\vec{s}\in \Z$ let $M_s\in \W$ be the corresponding solution in \Cref{lemma:ExistUnRegSol}. Define the operator $\calA:\Z\to \Z$ such that for all $j\in \{1,\dots,k\}$, $\calA(\vec{s})_j$ satisfies $\nu_j\,\calA(\vec{s})_j\in L^2(0,T;H^1(\Om))$, $\calA(\vec{s})_j\in H^1(0,T;H^{-1}(\Om))$, and for all $\z_j\in L^2(0,T;H^1_0(\Om))$,
\begin{subequations}\label{eq:defOpA}
\begin{align}
&\int_0^T \left[\langle\z_j, \p_t \calA(\vec{s})_j\rangle_{H^1_0,H^{-1}} + \nu_j \left(\del \Phi_j(\calA(\vec{s})_j) + \bm{v}_j\,\calA(\vec{s})_j ,\del \z_j\right) \right]= \int_0^T (f_j(M_s,\vec{s}), \z_j),\label{eq:defOpAa}\\
&\text{with } \calA(\vec{s})_j(0)=S_{0,j} \text{ and } \nu_j \calA(s)_j=\nu_j h_j \text{ on $\p\Om$ in the trace sense}.
\end{align}
\end{subequations}
 To prove \Cref{theo:ExistUnique} we need the following lemma.

\begin{lemma}[$L^1$-contraction property of $\calA$]\label{lemma:L1cont}
Under the assumptions of \Cref{theo:ExistUnique}, define $\Phi_j:\R\to \R$ by \eqref{eq:DefPhiJ}. Assume that $T<T^*$ for $T^*>0$ introduced in \Cref{lemma:ExistUnRegSol}. Then the operator $\calA:\Z\to \Z$, introduced in \eqref{eq:defOpA}, is well-defined. Moreover, there exists a strictly increasing function $\Cf\in C^1(\R^+)$ with $\Cf(0)=0$ such that for all $t\in [0,T]$ and $\vec{s}_1,\, \vec{s}_2\in \Z$,
$$
\int_0^t \|\calA(\vec{s}_1)-\calA(\vec{s}_2)\|_{(L^1(\Om))^k} \leq \Cf(t) \int_0^t \|\vec{s}_1-\vec{s}_2\|_{(L^1(\Om))^k}.
$$
\end{lemma}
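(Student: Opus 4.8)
The plan is to prove two things: first that the operator $\calA$ is well-defined (i.e., that \eqref{eq:defOpA} has a unique solution in the stated class for each $\vec{s}\in\Z$), and second that it satisfies the integrated $L^1$-contraction estimate. For well-posedness of each component $\calA(\vec{s})_j$, I would split into two cases according to the structure granted by the hypotheses of \Cref{theo:ExistUnique}. If $\nu_j>0$, then \eqref{eq:defOpAa} is a genuine (non-degenerate) quasilinear parabolic equation for $\calA(\vec{s})_j$ with diffusion generated by the strictly increasing Lipschitz function $\Phi_j$ (by \ref{prop:Df} and \eqref{eq:DefPhiJ}), an $L^\infty$ drift $\bm{v}_j$ (by \ref{prop:fl}), Dirichlet data $h_j$ (extendable by \ref{prop:BC}), and a right-hand side $f_j(M_s,\vec{s})\in L^2(Q)$ coming from the fixed $M_s\in\W$ and the Lipschitz bound \ref{prop:fg1}; existence and uniqueness then follow from the standard theory of Alt--Luckhaus \cite{alt1983quasilinear}, exactly as invoked for $M_{s,\e}$ in \Cref{lemma:ExistRegSol}. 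If $\nu_j=0$, then \eqref{eq:defOpAa} reduces to the ODE $\p_t\calA(\vec{s})_j = f_j(M_s,\vec{s})$ in $H^{-1}$ with initial value $S_{0,j}$, which integrates directly to $\calA(\vec{s})_j(t)=S_{0,j}+\int_0^t f_j(M_s,\vec{s})$; the Lipschitz bound on $f_j$ and $S_{0,j}\in L^\infty(\Om)$ (by \ref{prop:IC}) place this in $C([0,T];L^2(\Om))\subset\Z$. In both cases the output lies in $\Z$, so $\calA:\Z\to\Z$ is well-defined.

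For the contraction estimate, the key tool is the $L^1$-contraction/comparison principle of Otto \cite{otto1996l1}, the same mechanism already used in \Cref{lemma:ExistUnRegSol}. Fix $\vec{s}_1,\vec{s}_2\in\Z$, write $w_j:=\calA(\vec{s}_1)_j-\calA(\vec{s}_2)_j$, and subtract the two weak formulations \eqref{eq:defOpAa}. For each $j$ I would test the difference with $\sgn(w_j)$ (rigorously, with a smooth monotone approximation of the sign function, then pass to the limit). For $\nu_j>0$ the diffusion term $\nu_j(\del[\Phi_j(\calA(\vec{s}_1)_j)-\Phi_j(\calA(\vec{s}_2)_j)],\del\,\sgn(w_j))$ has the favorable sign and drops out in the limit because $\Phi_j$ is monotone increasing; the drift term $\nu_j(\bm{v}_j w_j,\del\,\sgn(w_j))$ must be controlled, which is the standard point where one uses Kru\v{z}kov-type entropy arguments or the divergence structure together with $\bm{v}_j\in L^\infty$ — this is the technical heart of the Otto result and is what I would cite rather than reprove. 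What survives is
\begin{align}
\|w_j(t)\|_{L^1(\Om)} \leq \int_0^t \|f_j(M_{s_1},\vec{s}_1)-f_j(M_{s_2},\vec{s}_2)\|_{L^1(\Om)},\nonumber
\end{align}
and for $\nu_j=0$ the same inequality follows trivially from the explicit ODE representation. Now invoke the Lipschitz continuity of $f_j$ (constant $C_L$, \ref{prop:fg1}) to bound the right-hand side by $C_L\int_0^t(\|M_{s_1}-M_{s_2}\|_{L^1(\Om)}+\|\vec{s}_1-\vec{s}_2\|_{(L^1(\Om))^k})$.

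The remaining and, I expect, main obstacle is to control $\|M_{s_1}-M_{s_2}\|_{L^1(\Om)}$ in terms of $\|\vec{s}_1-\vec{s}_2\|_{(L^1(\Om))^k}$, since $M_s$ depends on $\vec{s}$ through the nonlinear degenerate equation \eqref{eq:unreg}. Here I would again apply the $L^1$-contraction principle \cite{otto1996l1}, now to the $M$-equation: subtracting the weak formulations for $M_{s_1}$ and $M_{s_2}$ (which share the same initial and boundary data), testing with $\sgn(\Phi(M_{s_1})-\Phi(M_{s_2}))=\sgn(M_{s_1}-M_{s_2})$, and using that $f_0$ is $C_L$-Lipschitz yields an estimate of the form $\|M_{s_1}(t)-M_{s_2}(t)\|_{L^1(\Om)}\lesssim \int_0^t\|\vec{s}_1-\vec{s}_2\|_{(L^1(\Om))^k}$ after a Gronwall step \eqref{eq:Gronwall}. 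Feeding this bound back into the estimate for $\sum_j\|w_j\|_{L^1(\Om)}$, integrating in time, and collecting all constants into a single monotone function produces the claimed inequality with a suitable strictly increasing $\Cf\in C^1(\R^+)$, $\Cf(0)=0$ (one may take $\Cf(t)=Ct\,e^{Ct}$ or similar, absorbing the Gronwall factor). Care must be taken that all constants are uniform on $[0,T]$ with $T<T^*$, which is guaranteed by the uniform bound $M_s\in[0,1-\d]$ from \Cref{lemma:ExistUnRegSol} keeping us away from the singularity of $\Phi$.
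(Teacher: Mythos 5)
Your proposal is correct and follows essentially the same route as the paper's proof: well-definedness via Alt--Luckhaus \cite{alt1983quasilinear} for $\nu_j>0$ and direct ODE integration for $\nu_j=0$, then Otto's $L^1$-contraction \cite{otto1996l1} applied first to the $M$-equation \eqref{eq:unreg} (with the Lipschitz bound on $f_0$ and Gronwall to get $\|M_{s_1}-M_{s_2}\|_{L^1}\lesssim e^{C_Lt}\int_0^t\|\vec{s}_1-\vec{s}_2\|_{(L^1)^k}$) and then to \eqref{eq:defOpA} (with the Lipschitz bound on $f_j$), collecting constants into $\Cf(t)$ of the form $C\,t(1+Ct\,e^{Ct})$. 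The only difference is presentational — you derive the $\calA$-estimate before the $M$-estimate it depends on, while the paper proceeds in the reverse order — which does not affect correctness.
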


\begin{proof}
Since $f_j(M_s,\vec{s})\in C([0,T];L^2(\Om))$, and $D_j$ is bounded from above and below by a positive constant by \ref{prop:Df}, the existence and regularity results in \cite{alt1983quasilinear} imply that $\calA(\vec{s})_j$ is well-defined  for $\nu_j>0$ (similar to \Cref{lemma:ExistRegSol}). If $\nu_j=0$, then $\calA(\vec{s})_j$ is simply the solution of an ODE with known right hand side. From \eqref{eq:unreg}, using the $L^1$-contraction result in \cite{otto1996l1,andreianov2004uniqueness} 
and the Lipschitz continuity of $f_0$, it follows that for all $t\in [0,T]$,
\begin{align}
\|(M_{s_1}- M_{s_2})(t)\|_{L^1(\Om)}&\leq \int_0^t \|f_0(M_{s_1},\vec{s}_1)- f_0(M_{s_2},\vec{s}_2)\|_{L^1(\Om)}\nonumber\\
&\overset{\ref{prop:fg1}}\leq C_L \int_0^t \|\vec{s}_1-\vec{s}_2\|_{(L^1(\Om))^k} + C_L\int_0^t \|M_{s_1}- M_{s_2}\|_{L^1(\Om)}.
\end{align}
Applying Gronwall's Lemma \eqref{eq:Gronwall} we conclude that 
\begin{align}\label{eq:L1contractMs}
\|(M_{s_1}- M_{s_2})(t)\|_{L^1(\Om)}\leq C_L \exp(C_L\,  t)\int_0^t \|\vec{s}_1-\vec{s}_2\|_{(L^1(\Om))^k}.
\end{align}
We now apply the  $L^1$-contraction principle to \eqref{eq:defOpA} and use the Lischitz continuity of $f_j$ and the previous estimate to get 
\begin{align}\label{eq:L1contractionSeq}
\|(\calA(\vec{s}_1)_j-\calA(\vec{s}_2)_j)(t)\|_{L^1(\Om)}&\leq \int_0^t \|f_j(M_{s_1},\vec{s}_1)- f_j(M_{s_2},\vec{s}_2)\|_{L^1(\Om)}\nonumber\\
&\leq C_L \int_0^t\|\vec{s}_1-\vec{s}_2\|_{(L^1(\Om))^k} + C_L\int_0^t \|M_{s_1}- M_{s_2}\|_{L^1(\Om)}\nonumber\\
&\leq C_L(1 + C_L\,t\, \exp(C_L\,  t))\int_0^t \|\vec{s}_1-\vec{s}_2\|_{(L^1(Q))^k}.
\end{align}
Note that this estimate also holds for the case $\nu_j=0$. Hence, setting $\Cf(t) = k\,C_L\,t\,(1 + C_L\,t\, \exp(C_L\,  t))$ the result follows.
\end{proof}

\begin{proof}[\textbf{Proof of \Cref{theo:ExistUnique}}] 
Choosing $T>0$ small enough such that $\Cf(T)<1$ the existence of a unique  weak solution $(M,\vec{S})$ of \eqref{eq:main} follows from \Cref{lemma:L1cont} and
Banach's fixed point theorem. 
Since \Cref{lemma:MaxRegSol} implies that $T^*=\infty$ provided that $\G_1$ has a non-zero measure, the argument can be repeated and solutions can be patched together to cover the interval $[0,T]$ for an arbitrary $T>0$, thus concluding the proof.
\end{proof}

\subsection{A fixed point argument for proving \Cref{theo:Existence}}\label{sec:Schauder}
In this section, we use Schauder's fixed point theorem to prove the existence of solutions  for general diffusion coefficients $D_j$ satisfying \ref{prop:Df}. Since the case of ODE-PDE couplings, where $\nu_j=0$, is already covered by the previous section, here we assume that 
$\nu_j>0$. Similarly as in the previous section, we define the map $\calB: (L^2(Q))^k\to (L^2(Q))^k $ such that for all $j\in \{1,\dots,k\}$, $\calB(\vec{s})_j\in L^2(0,T;H^1(\Om))\cap H^1(0,T;H^{-1}(\Om))$, and for all $\z_j\in L^2(0,T;H^1_0(\Om))$,
\begin{subequations}\label{eq:defOpB}
\begin{align}
&\int_0^T [\langle\z_j, \p_t \calB(\vec{s})_j\rangle_{H^1_0,H^{-1}} + \nu_j (D_j(M_s,\vec{s})\del \calB(\vec{s})_j + \bm{v}_j\,\calB(\vec{s})_j ,\del \z_j) ]= \int_0^T (f_j(M_s,\calB(\vec{s})), \z_j),\label{eq:defOpBa}\\
&\text{with }\calB(\vec{s})_j(0)=S_{0,j}, \text{ and } \calB(s)_j= h_j \text{ on $\p\Om$ in the trace sense}.
\end{align}
\end{subequations}

\begin{lemma}[Schauder criteria for $\calB$]\label{lemma:Schauder}
Let $\nu_j>0$ for all $j\in \{1,\dots,k\}$. Then under the assumptions of \Cref{lemma:ExistUnRegSol}, the operator $\calB:(L^2(Q))^k\to (L^2(Q))^k$ introduced in \eqref{eq:defOpB} is well-defined, continuous, compact, and $\|\calB(\vec{s})\|_\Z$ is bounded for all $\vec{s}\in (L^2(Q))^k$.
\end{lemma}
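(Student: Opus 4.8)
The plan is to verify the four required properties of $\calB$---well-definedness, boundedness in $\Z$, continuity, and compactness---in that order, treating the equation for each component $\calB(\vec{s})_j$ as a linear non-degenerate advection-reaction-diffusion problem with coefficients frozen at the given $\vec{s}$. First I would establish well-definedness: for a fixed $\vec{s}\in (L^2(Q))^k$, Lemma \ref{lemma:ExistUnRegSol} (together with Remark \ref{rem:SinL2Q}, which extends its validity to $\vec{s}\in(L^2(Q))^k$) produces the corresponding $M_s\in \W$ with $0\leq M_s\leq 1-\d$. Since $D_j(M_s,\vec{s})\in [D_{\min},D_{\max}]$ by \ref{prop:Df} satisfies the uniform ellipticity condition and $\bm{v}_j\in (L^\infty(Q))^d$ by \ref{prop:fl}, the existence of a unique $\calB(\vec{s})_j$ follows from the linear parabolic theory in \cite{alt1983quasilinear}, exactly as in \Cref{lemma:ExistRegSol}; here the reaction term $f_j(M_s,\calB(\vec{s}))$ couples the components, but since $f_j$ is uniformly Lipschitz by \ref{prop:fg1} this is handled by a standard fixed-point or Galerkin argument on the linear system.

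Next I would derive the $\Z$-bound by a standard energy estimate: testing \eqref{eq:defOpBa} with $\z_j=\calB(\vec{s})_j-h^e_j$, using ellipticity of $D_j$ to control $\|\del \calB(\vec{s})_j\|^2$ from below, absorbing the advection term $\nu_j(\bm{v}_j\,\calB(\vec{s})_j,\del\z_j)$ via Young's inequality \eqref{Eq:YoungsIneq}, and bounding the reaction term using the Lipschitz property of $f_j$ together with the uniform $L^\infty$ bound on $M_s$. Gronwall's Lemma \eqref{eq:Gronwall} then yields a bound on $\|\calB(\vec{s})_j\|_{L^\infty(0,T;L^2(\Om))}$ and on $\|\del\calB(\vec{s})_j\|_{L^2(Q)}$, hence on $\|\calB(\vec{s})\|_\Z$; crucially this bound depends only on the data and not on $\vec{s}$, because the coefficients enter only through the uniformly bounded quantities $D_j\in[D_{\min},D_{\max}]$ and the $L^\infty$-bounded $M_s$. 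The same test function also gives a bound on $\p_t\calB(\vec{s})_j$ in $L^2(0,T;H^{-1}(\Om))$ by duality, which I will need for compactness.

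For compactness, I would combine the uniform bounds $\calB(\vec{s})_j$ in $L^2(0,T;H^1(\Om))$ and $\p_t\calB(\vec{s})_j$ in $L^2(0,T;H^{-1}(\Om))$ with the Aubin--Lions--Simon lemma, using the compact embedding $H^1(\Om)\hookrightarrow\hookrightarrow L^2(\Om)$, to conclude that $\calB$ maps bounded sets of $(L^2(Q))^k$ into relatively compact subsets of $(L^2(Q))^k$. The main obstacle, and the step I expect to require the most care, is continuity of $\calB$: given $\vec{s}^n\to\vec{s}$ in $(L^2(Q))^k$, I must show $\calB(\vec{s}^n)\to\calB(\vec{s})$. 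The difficulty is twofold---the map $\vec{s}\mapsto M_s$ must be shown to be continuous (or at least sequentially continuous along convergent subsequences) into $L^2(Q)$, which follows from the $L^1$-contraction estimate \eqref{eq:L1contractMs}-type control in \Cref{lemma:ExistUnRegSol} combined with the uniform $L^\infty$ bound to upgrade $L^1$ to $L^2$ convergence, and then the coefficient $D_j(M_{s^n},\vec{s}^n)$ converges a.e. (along a subsequence) to $D_j(M_s,\vec{s})$ by Lipschitz continuity of $D_j$. I would then pass to the limit in the weak formulation \eqref{eq:defOpBa}: the uniform $\Z$-bound gives weak limits, the strong $L^2(Q)$ convergence of $\calB(\vec{s}^n)_j$ (from compactness) handles the nonlinear reaction term $f_j(M_{s^n},\calB(\vec{s}^n))$ and the product $D_j(M_{s^n},\vec{s}^n)\del\calB(\vec{s}^n)_j$ (dominated-convergence on the bounded coefficient times weak convergence of the gradient), and uniqueness of the limit problem identifies it as $\calB(\vec{s})$. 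A subsequence-of-every-subsequence argument then upgrades this to full sequential continuity.
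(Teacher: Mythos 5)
Your proposal is correct, and its treatment of well-definedness, the $\Z$-bound, and compactness coincides with the paper's Step 1: the same energy estimate with test function $\z_j=\calB(\vec{s})_j-h^e_j$, Young's inequality and Gronwall, followed by the duality bound on $\p_t\calB(\vec{s})_j$ and the Aubin--Lions--Simon compact embedding $L^2(0,T;H^1_0(\Om))\cap H^1(0,T;H^{-1}(\Om))\hookrightarrow\hookrightarrow L^2(Q)$.

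Where you genuinely diverge from the paper is the continuity step, which is indeed the delicate part. The paper argues \emph{quantitatively}: it subtracts the two weak formulations, tests with the difference $\eth s^i_j=\calB(\vec{s}^i)_j-\calB(\vec{s}^{\,\star})_j$, and controls the troublesome coefficient term $(D_j(M_s^i,\vec{s}^i)-D_j(M_s^{\star},\vec{s}^{\,\star}))\del\calB(\vec{s}^{\,\star})_j$ by first approximating $\calB(\vec{s}^{\,\star})_j$ by a smooth $s_j^{\e,\star}$ in $L^2(0,T;H^1(\Om))$ (this is needed because the coefficient difference converges only in $L^2(Q)$ while $\del\calB(\vec{s}^{\,\star})_j$ is merely $L^2$, so their product is a priori only $L^1$); Gronwall then gives strong convergence of $\eth s^i_j$ in $L^2(0,T;H^1(\Om))\cap\Z$, with no subsequence extraction. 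Your route is \emph{soft}: extract a subsequence via the compactness already established, pass to the limit in the weak formulation using the weak-strong pairing (a.e.\ convergent, uniformly bounded coefficient against weakly convergent gradient), identify the limit as $\calB(\vec{s}^{\,\star})$ by uniqueness of the frozen-coefficient linear system, and conclude by the subsequence-of-every-subsequence principle. Both are valid; both rely on the same two ingredients for the map $\vec{s}\mapsto M_s$ (the $L^1$-contraction \eqref{eq:L1contractMs} upgraded to $C([0,T];L^2(\Om))$ via the uniform $L^\infty$ bound). Your argument avoids the smooth-approximation device entirely and is arguably more elementary, but it yields only $L^2(Q)$-continuity (which suffices for Schauder's theorem as stated), whereas the paper's direct estimate is constructive and delivers the stronger conclusion $\|\eth s^i_j\|_{L^2(0,T;H^1(\Om))}+\|\eth s^i_j\|_{\Z}\to 0$. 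The one point you should make explicit if you write this up is the uniqueness of solutions of the system \eqref{eq:defOpB} with coefficients frozen at $(M_{s^\star},\vec{s}^{\,\star})$ -- it follows from a standard Gronwall argument using \ref{prop:Df} and the Lipschitz continuity of $f_j$, and it is what makes both the limit identification and the subsequence principle work.
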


\begin{proof} \textbf{(Step 1): Well-posedness, boundedness and compactness.} Recalling \Cref{rem:SinL2Q}, we have existence and boundedness of weak solutions $M_s\in \W$ for any $\vec{s}\in (L^2(Q))^k$.
We observe that $D_j$ satisfies the ellipticity condition by \ref{prop:Df}, $\bm{v}_j\in (L^\infty(\Om))^d$, $f_j(\cdot,s)$ is Lipschitz continuous, and for $h^e_j$ in \ref{prop:BC}, we have
$$
|f_j(M_s,h^e_j)|\overset{\ref{prop:fg1}} \leq C (1+|h^e_j|+ |M_s|)\in L^2(\Om) \text{ for a constant } C>0.
$$ 
Consequently, $\calB(\vec{s})_j \in L^2(0,T;H^1(\Om))\cap H^1(0,T;H^{-1}(\Om))$ is well-defined.
This is also evident from a Schaefer's fixed point argument \cite[Chapter 9]{evans1988partial} using the a-priori estimate obtained by inserting $\zeta_j=\calB(\vec{s})_j -h^e_j$ in \eqref{eq:defOpB}. For the first term this yields, 
\begin{align*}
&\int_0^T\langle \calB(\vec{s})_j -h^e_j, \p_t \calB(\vec{s})_j\rangle_{H^1_0,H^{-1}}=\frac{1}{2}\left[\|\calB(\vec{s}(T))_j-h^e_j\|^2-\|S_{0,j}-h^e_j\|^2\right],
\end{align*}
and for the diffusion and convection terms we obtain,  
\begin{align*}
&\nu_j\int_0^T(D_j(M_s,\vec{s})\del \calB(\vec{s})_j,\del (\calB(\vec{s})_j -h^e_j))\\
&\qquad\quad =\frac{\nu_j}{2} \int_Q D_j(M_s,\vec{s})\left[|\del \calB(\vec{s})_j|^2- |\del h^e_j|^2+ |\del (\calB(\vec{s})_j -h^e_j)|^2\right],\\
&\left|\nu_j\int_0^T(\bm{v}_j\,\calB(\vec{s})_j, \del (\calB(\vec{s})_j -h^e_j))\right|\\
&\qquad\quad \overset{\eqref{Eq:YoungsIneq}}
\leq \tfrac{\nu_j\|\bm{v}_j\|^2_{L^\infty(\Om)}}{2D_{\min}} \int_0^T \|\calB(\vec{s})_j\|^2 + \tfrac{\nu_j D_{\min}}{2}\int_0^T \|\del (\calB(\vec{s})_j -h^e_j)\|^2.
\end{align*}
Finally, we can estimate the reaction term by  
\begin{align*} 
&\int_0^T (f_j(M_s,\calB(\vec{s})),\calB(\vec{s})_j -h^e_j)= \int_0^T (f_j(M_s,\calB(\vec{s}))-f_j(M_s,\vec{h}^e)+f_j(M_s,\vec{h}^e),\calB(\vec{s})_j -h^e_j)\\
&\qquad \overset{\eqref{Eq:YoungsIneq}, \ref{prop:fg1}}\leq C\sum_{i=1}^k \int_0^T \|\calB(\vec{s})_i-h^e_i\|^2_{(L^2(\Om))^k} + \int_0^T \|f_j(M_s, \vec{h}^e)\|^2.
\end{align*}
Combining the above estimates, using Young's inequality and summing from $j=1$ to $k$, one has
\begin{align*}
&\sum_{j=1}^k\left [\|\calB(\vec{s}(T))_j-h^e_j\|^2 + \nu_j D_{\min} \int_0^T \|\del \calB(\vec{s})_j\|^2 \right ]\lesssim 1+  \int_0^T \sum_{j=1}^k\|\calB(\vec{s})_j-h^e_j\|^2.
\end{align*}
Thus, using Gronwall's Lemma \eqref{eq:Gronwall}, we conclude that $\calB(\vec{s})$ is uniformly bounded in~$L^2(0,T;(H^1(\Om))^k)$ and $L^\infty(0,T;(L^2(\Om))^k)$. Moreover, from \eqref{eq:defOpB}, we obtain 
\begin{align*}
&\|\partial_t\calB(\vec{s})_j\|_{L^2(0,T;H^{-1}(\Om))}\\
&=\sup_{\substack{\|\z_j\|_{L^2(0,T;H^1_0(\Om))}=1 }} \int_0^T \left [\nu_j (D_j(M_s,\vec{s})\del \calB(\vec{s})_j + \bm{v}_j\,\calB(\vec{s})_j ,\del \z_j)- (f_j(M_s,\calB\vec{s}), \z_j)\right ]\\
&\leq \nu_j (D_{\max} \|\del \calB(\vec{s})_j\|_{L^2(Q)} + \|\bm{v}_j\|_{L^\infty(\Om)}\|\calB(\vec{s})_j\|_{L^2(Q)}) + C_\Om\,\|f_j(M_s,\calB (\vec{s}))\|_{L^2(Q)}.
\end{align*}
Hence $\calB(\vec{s})_j-h^e_j\in L^2(0,T;H^1_0(\Om))$ and $\calB(\vec{s})_j\in H^1(0,T;H^{-1}(\Om))$ is bounded uniformly with respect to $\vec{s}\in (L^2(Q))^k$. Due to the compact embedding $L^2(0,T;H^1_0(\Om))\cap H^1(0,T;H^{-1}(\Om))\hookrightarrow \hookrightarrow L^2(Q)$ (see \cite{simon1986compact}), the mapping $\calB$ is also compact. Moreover, due to the continuous embedding  $L^2(0,T;H^1_0(\Om))\cap H^1(0,T;H^{-1}(\Om))\subset C([0,T];L^2(\Om))$, $\|\calB(\vec{s})\|_\Z$ is bounded uniformly.

\textbf{(Step 2): Continuity.} Let the sequence $\{\vec{s}^i\}_{i\in \N}$ converge to $ \vec{s}^{\,\star}$ in $(L^2(Q))^k$. Let $M_{s}^i$ and $M^\star_s$ denote the solutions of \eqref{eq:unreg} for $\vec{s}=\vec{s}^i$ and $\vec{s}=\vec{s}^{\,\star}$ respectively. Then, by using the $L^1$-contraction result \eqref{eq:L1contractMs} we have that 
$\|(M_{s}^i-M_s^\star)(t)\|_{L^1(\Om)}\to 0$ for all $t\in (0,T]$. Since, $M_{s}^i$ are bounded in $L^\infty(\Om)$ (\Cref{lemma:ExistUnRegSol}), one further has that 
\begin{align}\label{eq:MSto0inL2}
\|\vec{s}^i-\vec{s}^{\,\star}\|_{(L^2(Q))^k}+ \|M_{s}^i-M_s^\star\|_{C([0,T];L^2(\Om))}\rightarrow 0\ \text{ as } i\rightarrow \infty.
\end{align}
Observe that, since $\calB(\vec{s}^{\,\star})_j \in L^2(0,T;H^1(\Om))$, for any given $\e>0$, there exists  $s_j^{\e,\star}\in C^\infty(\R^{d})$ such that 
\begin{align}\label{eq:Sstar_eps_def}
    \|\calB(\vec{s}^{\,\star})_j - s_j^{\e,\star}\|_{L^2(0,T;H^1(\Om))} \leq \e/4D_{\max}.
\end{align}

We consider the difference of   $\calB(\vec{s}^i)_j$ and $\calB(\vec{s}^{\,\star})_j$ by subtracting two versions of \eqref{eq:defOpB}.
First, we split up the diffusion term, 
\begin{subequations}\label{eq:DTermsplit}
\begin{align}
&\int_0^T (D_j(M_s^i,\vec{s}^i)\del \calB(\vec{s}^i)_j- D_j(M_s^{\star},\vec{s}^{\,\star})\del \calB(\vec{s}^{\,\star})_j,\del \zeta_j)\nonumber\\
&=\int_0^T (D_j(M_s^i,\vec{s}^i)\del (\calB(\vec{s}^i)_j-\calB(\vec{s}^{\,\star})_j)+ (D_j(M_s^i,\vec{s}^i)- D_j(M_s^{\star},\vec{s}^{\,\star}))\del \calB(\vec{s}^{\,\star})_j,\del \zeta_j),
\end{align}
and use H\"older's inequality to estimate the second term as follows 
\begin{align}
&\|(D_j(M_s^i,\vec{s}^i)- D_j(M_s^{\star},\vec{s}^{\,\star}))\del \calB(\vec{s}^{\,\star})_j\|_{L^2(Q)}\nonumber \\
&\leq  \|(D_j(M_s^i,\vec{s}^i)- D_j(M_s^{\star},\vec{s}^{\,\star}))\del s^{\e,\star}_j\|_{L^2(Q)} + \|(D_j(M_s^i,\vec{s}^i)- D_j(M_s^{\star},\vec{s}^{\,\star}))\del (\calB(\vec{s}^{\,\star})_j - s^{\e,\star}_j)\|_{L^2(Q)}\nonumber\\
& \leq  \|s^{\e,\star}_j\|_{C^1(Q)} \|D_j(M_s^i,\vec{s}^i)- D_j(M_s^{\star},\vec{s}^{\,\star})\|_{L^2(Q)} + 2 D_{\max} \|\del (\calB(\vec{s}^{\,\star})_j - s^{\e,\star}_j))\|_{L^2(Q)}\overset{\eqref{eq:MSto0inL2}, \eqref{eq:Sstar_eps_def}} \leq \e,
\end{align}
for all $i\geq i_{\e,1}$, where $i_{\e,1}\in \N$ is large enough. Here we used the Lipschitz continuity of $D_j$, see \ref{prop:Df}.
Furthermore \eqref{eq:MSto0inL2} implies for $i\geq i_{\e,2}$, where $i_{\e,2}\in \N$ is large enough, that
\begin{align}
\|f_j(M_s^i,\calB(\vec{s}^i))- f_j(M_s^{\star},\calB(\vec{s}^{\,\star}))\|_{L^2(\Om)}\overset{\ref{prop:fg1}}\lesssim \e+\|\calB(\vec{s}^i)-\calB(\vec{s}^{\,\star})\|_{(L^2(\Om))^k}.
\end{align}
\end{subequations}
 Hence, defining $\eth s_j^i:=\calB(\vec{s}^i)_j-\calB(\vec{s}^{\,\star})_{j}$, it follows from \eqref{eq:defOpB} that for $i\geq \max\{i_{\e,1},i_{\e,2}\}$,
\begin{align*}
&\left |\int_0^T [\langle\z_j, \p_t \,\eth s^i_j\rangle_{H^1_0,H^{-1}} + \nu_j (D_j(M_s^i,\vec{s}^i)\del \eth s^i_j + \bm{v}_j\,\eth s^i_j ,\del \z_j) ]\right |\\
&\overset{\eqref{eq:DTermsplit}}
\lesssim  \int_0^T \left [\sum_{j=1}^k\|\eth s^i_j\|\|\z_j\|+\e\|\z_j\| + \e \|\del \z_j\|\right ]\\
&\ \, \lesssim \ \int_0^T \left [\sum_{j=1}^k\|\eth s^i_j\|\|\z_j\| +\varepsilon+\varepsilon\left(\|\z_j\|^2 + \|\del \z_j\|^2\right)\right ].
\end{align*}
Finally, we insert the test function $\z_j=\eth s^i_j\in L^2(0,T;H^1_0(\Om))$ and sum up the resulting estimates from $j=1$ to $k$. Then for $\e>0$ small enough, one obtains using 
Gronwall's lemma \eqref{eq:Gronwall}
\begin{align*}
\sum_{j=1}^k\left [\|\eth s^i_j\|^2 + \int_0^T \|\del \eth s^i_j\|^2 \right ]\lesssim \e.
\end{align*}
Hence, the right hand side can be made arbitrary small by choosing $\e>0$ small enough which simply requires $i\geq \max\{i_{\e,1},i_{\e,2}\}$. Passing to the limit $\e\to 0$ we conclude that $\|\eth s^i_j\|_{L^2(0,T;H^1(\Om))}+ \|\eth s^i_j\|_\Z\to 0$ for all $j=1,\dots,k$.
 This shows that the operator $\calB$ is  continuous, thus, concluding the proof.
\end{proof}

\begin{proof}[\textbf{Proof of \Cref{theo:Existence}}] If $\nu_j>0$ for all $j\in \{1,\dots,k\}$, then using Lemma \ref{lemma:Schauder} and
Schauder's fixed point theorem, see \cite[Chapter 9]{evans1988partial},  we conclude that a fixed point $\vec{s}=\vec{S}\in \Z\subset (L^2(Q))^k$ exists of the mapping $\calB$, i.e. $\calB(\vec{S})=\vec{S}$. It is easy to verify that this fixed point $(M_S,\vec{S})$ is a weak solution of \eqref{eq:main}. 

If $\nu_{\bar{j}}=0$ for all $\bar{j}\in\mathcal{I}\subset\{1,\dots,k\}$, then the theorem is proved by first applying the contraction mapping in \Cref{lemma:L1cont} for \eqref{eq:M} and \eqref{eq:Sj} with $\bar{j}\in\mathcal{I}$, followed by applying the fixed point argument for $j\in\{1,\dots,k\}\setminus\mathcal{I}$. The details are left for the avid reader.
\end{proof}

The approach developed in this section can be extended to systems with degenerate diffusion coefficients $D_j$ under some additional assumptions. Below we discuss an example of such a case.

\begin{corollary}[Existence of weak solutions for degenerate $D_j$]\label{cor:degenerateDj} Assume that \ref{prop:D}, \ref{prop:fl}--\ref{prop:BC} and \ref{prop:fg2} hold.  
For some $\j\in \{1,\dots,k\}$, instead of \ref{prop:Df}, assume that the diffusion coefficient $D_\j$  satisfies $D_\j(m,\vec{s})=D_\j(s_\j)$ where
\begin{align*}
&D_\j:[0,\infty)\to\R \text{ is continuous and strictly increasing, with } D_\j(0)=0.
\end{align*}
Moreover, let $\mathrm{ess} \inf \{S_{0,\j}\}\geq 0, \mathrm{ess} \inf \{h_\j\}\geq 0$, $\bm{v}_\j= \bm{0}$ in $Q$, $f_\j(\cdot,\vec{s})\leq f^\j_{\max}(s_j)$ for some function $f^\j_{\max}\in \mathrm{Lip}(\R^+)$ and $f_\j(m,\vec{s})\geq 0$ if $s_\j=0$.

Then a weak solution $(M,\vec{S})$ of \eqref{eq:main} exists  in the sense of \Cref{def:WeakSol}, but with $\nu_\j\,S_\j\in L^2(0,T;H^1(\Om))$ replaced by
$\nu_\j\int_0^{S_\j} D_\j\in L^2(0,T;H^1(\Om))$.
The solution is unique if for all $j\in \{1,\dots,k\}$ either $\nu_j=0$ or $D_j$ depends only on $s_j$. 
Furthermore, $S_\j$ is non-negative and bounded almost everywhere in $Q$.
\end{corollary}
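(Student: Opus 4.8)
The plan is to exploit that, under the stated hypotheses, the equation \eqref{eq:Sj} for $j=\j$ has exactly the same structure as the degenerate equation \eqref{eq:M} for $M$. Indeed, since $\bm{v}_\j=\bm{0}$ and $D_\j(m,\vec s)=D_\j(s_\j)$ with $D_\j(0)=0$, introducing the Kirchhoff transform $\Phi_\j(S)=\int_0^S D_\j$ from \eqref{eq:DefPhiJ} turns \eqref{eq:Sj} into $\p_t S_\j=\nu_\j\,\del\cdot[\del \Phi_\j(S_\j)]+f_\j(M,\vec S)$, i.e. a porous-medium--type degenerate parabolic equation (degenerate at $0$, but without the singularity of $\Phi$ at $1$). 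Hence I would treat $S_\j$ as a second degenerate variable alongside $M$ and re-run the regularization/limit theory of \Cref{lemma:ExistRegSol,lemma:MaxRegSol,lemma:ExistUnRegSol} for it, and then close the coupling with the fixed point arguments of \Cref{sec:L1contraction,sec:Schauder}.

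First I would establish the two-sided $L^\infty$ bound on $S_\j$, mirroring the proof of \Cref{lemma:MaxRegSol}. Non-negativity follows by testing the (regularized) $S_\j$-equation with $[S_\j]_-$ and using $S_{0,\j}\geq 0$, $h_\j\geq 0$ together with the sign condition $f_\j(m,\vec s)\geq 0$ whenever $s_\j=0$, exactly as in Step~1 there. For the upper bound I would compare $S_\j$ with the solution $\hat S\in C^1(\R^+)$ of $\hat S(t)=\overline{S}+\int_0^t f^\j_{\max}(\hat S(\vr))\,\dd\vr$; testing with $[S_\j-\hat S]_+$ and using $f_\j(\cdot,\vec s)\leq f^\j_{\max}(s_\j)$ gives $S_\j\leq \hat S$, as in Step~2. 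Because $f^\j_{\max}\in\mathrm{Lip}(\R^+)$, the comparison ODE has a global solution, so $\hat S(T)<\infty$ and $S_\j$ is confined to the compact interval $[0,\hat S(T)]$, on which $D_\j$ (and hence $\Phi_\j$) is bounded and Lipschitz. This is the analogue of the $L^\infty$ bound that, for $M$, came from \ref{prop:fg2}; note the singularity-at-$1$ argument (Step~3 of \Cref{lemma:MaxRegSol}) is \emph{not} needed here, which is why $D_\j$ is permitted to grow unboundedly.

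Next, for fixed $M$ and fixed $\{s_j\}_{j\neq\j}$, I would solve the degenerate $S_\j$-equation by regularizing $\Phi_\j$ with a non-degenerate $\Phi_{\j,\e}$ as in \eqref{eq:propPhieps}, solving the resulting uniformly parabolic problem (the analogue of \Cref{lemma:ExistRegSol}), deriving the a-priori bounds \eqref{eq:apriori1}--\eqref{eq:apriori2}, and passing $\e\to 0$ exactly as in \Cref{lemma:ExistUnRegSol}. Since $\Phi_\j$ is convex near $0$ (as $D_\j$ is strictly increasing) while $D_\j$ is bounded away from $0$ on $[\delta,\hat S(T)]$ for any $\delta>0$, one can again construct a convex comparison function $\eta_\j$ with $\eta_\j=\Phi_\j$ near $0$ and $\eta_\j\circ\Phi_\j^{-1}\in\mathrm{Lip}(\R^+)$, so that the Jensen/super-additivity argument leading to \eqref{eq:convergenceMse} yields strong $L^2(Q)$ convergence of $S_{\j,\e}$ and weak $H^1(Q)$ convergence of $\Phi_{\j,\e}(S_{\j,\e})$. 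Just as for $M$, the $H^1$-initial-data requirement of \eqref{eq:apriori2} is removed afterwards by the $L^1$-contraction/density argument (Step~2 of \Cref{lemma:ExistUnRegSol}), using the $L^1$-contraction for the degenerate $S_\j$-equation from \cite{otto1996l1} (Dirichlet) and \cite{andreianov2004uniqueness} (Neumann). The outcome is $\nu_\j\int_0^{S_\j}D_\j=\nu_\j\Phi_\j(S_\j)\in L^2(0,T;H^1(\Om))$, which is precisely the modified regularity in the statement.

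Finally, the coupled existence statement follows by incorporating $S_\j$ into the fixed point scheme of \Cref{sec:Schauder}: the operator is modified so that both $M$ and $S_\j$ are produced by the degenerate solver above, while the non-degenerate substrates $S_j$ ($j\neq\j$) are produced as in \Cref{lemma:Schauder}. The uniqueness claim, under the hypothesis that each $j$ has either $\nu_j=0$ or $D_j$ depending only on $s_j$, follows because $S_\j$ now fits the framework of \Cref{lemma:L1cont} (a degenerate equation depending only on $s_\j$, for which the $L^1$-contraction theory still applies), so the Banach fixed point argument of \Cref{theo:ExistUnique} carries over. I expect the main obstacle to be the compactness within the Schauder step: unlike the non-degenerate substrates, strong convergence of $S_\j$ cannot be obtained from Aubin--Lions applied to $S_\j$ directly, but only through the $H^1(Q)$-bound on $\Phi_\j(S_\j)$ and the convexity argument above. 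The delicate point is to run this degenerate-compactness argument for $M$ and $S_\j$ \emph{simultaneously} inside one fixed point, keeping all $L^\infty$ bounds and the estimate \eqref{eq:apriori2} uniform under the coupling.
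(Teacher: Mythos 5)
Your core reduction is exactly the paper's: since $\bm{v}_\j=\bm{0}$ and $D_\j=D_\j(s_\j)$, the $\j$-th substrate equation becomes, after the Kirchhoff transform $\Phi_\j$ of \eqref{eq:DefPhiJ}, a degenerate parabolic equation of the same type as \eqref{eq:M} (degenerate at $0$, no singularity at $1$), so the chain \Cref{lemma:ExistRegSol,lemma:MaxRegSol,lemma:ExistUnRegSol} applies verbatim: regularize $\Phi_\j$, obtain $0\leq S_\j\leq\hat S_\j$ by comparison with $\hat S_\j(t)=\overline{S}+\int_0^t f^\j_{\max}(\hat S_\j)$ (using $f_\j(\cdot,\vec s)\leq f^\j_{\max}(s_\j)$, the sign condition at $s_\j=0$, and $S_{0,\j},h_\j\geq 0$), and pass $\e\to 0$ to get $\nu_\j\Phi_\j(S_\j)\in L^2(0,T;H^1(\Om))$. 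This, including your observation that Step 3 of \Cref{lemma:MaxRegSol} is not needed, coincides with the paper's argument.

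Where you and the paper part ways is the assembly of the coupled system, and this is where your proposal has a genuine unresolved step. The paper never places $S_\j$ inside the Schauder operator: it fixes the components $s_j$, $j\neq\j$, observes that the map $s_\j\mapsto\tilde s_\j$ (first solve for $M_s$, then for $\tilde s_\j$) satisfies the $L^1$-contraction of \Cref{lemma:L1cont}, and produces $(M,S_\j)$ by Banach's fixed point theorem as in the proof of \Cref{theo:ExistUnique}; Schauder is then only invoked for the remaining non-degenerate substrates, for which the linear compactness $L^2(0,T;H^1_0(\Om))\cap H^1(0,T;H^{-1}(\Om))\hookrightarrow\hookrightarrow L^2(Q)$ of \Cref{lemma:Schauder} is available. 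You instead make $S_\j$ a component of the Schauder fixed point, and, as you yourself flag, compactness of $\vec s\mapsto\calB(\vec s)_\j$ in $L^2(Q)$ then does not follow from the paper's lemmas: only $\Phi_\j(S_\j)$, not $S_\j$, has spatial $H^1$ regularity, and the $H^1(Q)$ bound on $\Phi_\j(S_\j)$ you want to fall back on comes from \eqref{eq:apriori2}, hence requires $H^1$ initial data, which a general $S_{0,\j}\in L^\infty(\Om)$ does not provide; note also that the convexity argument of \Cref{lemma:ExistUnRegSol} concerns the $\e\to 0$ limit for one fixed input, not precompactness of the image of the operator over varying $\vec s$. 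The obstacle is closable (an Alt--Luckhaus-type nonlinear compactness argument combining the uniform $L^\infty$ bound, the $L^2(0,T;H^1)$ bound on $\Phi_\j(\calB(\vec s)_\j)$ and the $L^2(0,T;H^{-1})$ bound on $\p_t\calB(\vec s)_\j$ would do), but you leave it as an expectation rather than a proof. Since you already note that $S_\j$ ``fits the framework of \Cref{lemma:L1cont}'' for the uniqueness claim, the cleanest repair is to use that observation for existence as well, i.e. adopt the paper's nesting: inner $L^1$-contraction fixed point for $(M,S_\j)$ (and for any components with $\nu_j=0$), outer Schauder only over the non-degenerate substrates. With that rearrangement your argument is complete and agrees with the paper's proof.
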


If $D_\j=D_\j(s_\j)$ is degenerate for some $\j\in \{1,\dots,k\}$, without loss of generality we assume $\nu_\j>0$. We define $\Phi_\j$ as in \eqref{eq:DefPhiJ} and fix all $s_j\in C([0,T];L^2(\Om))$, $j\not=\j$, for $s_\j\in C([0,T];L^2(\Om))$. Let $M_s\in \W$ be the solution of \eqref{eq:unreg} from \Cref{lemma:ExistUnRegSol}.
Then, $\tilde{s}_\j\in H^1(0,T;H^{-1}(\Om))\cap C([0,T];L^2(\Om))$ with $\Phi_\j(\tilde{s}_\j)\in L^2(0,T;H^1(\Om))$ is defined as the solution of the following problem, for all $\z_\j\in L^2(0,T;H^1_0(\Om))$,
\begin{align*}
&\int_0^T [\langle\z_\j, \p_t \tilde{s}_\j\rangle_{H^1_0,H^{-1}} + \nu_\j (\del \Phi_\j(\tilde{s}_\j),\del \z_\j) ]= \int_0^T (f_\j(M_s,(s_1,\dots,\tilde{s}_\j,\dots,s_k)), \z_\j),\\
&\text{with } \tilde{s}_\j(0)=S_{0,\j} \  \text{ and } \  \Phi_\j (\tilde{s}_\j)=\Phi_\j( h_\j )\text{ on $\p\Om$ in the trace sense}.
\end{align*}
The existence and uniqueness of $\tilde{s}_\j$ then follows from \Cref{lemma:ExistRegSol,lemma:ExistUnRegSol}. Defining $\hat{S}_\j(t):=\bar{S}+  \int_0^t f^\j_{\max}$, we have, similar to \Cref{lemma:MaxRegSol} that $0\leq \tilde{s}_\j(t)\leq \hat{S}_\j(t)$ a.e. in $\Om$ for all $t>0$.    Following \Cref{lemma:L1cont}, we further conclude that $\tilde{s}_\j$ satisfies an $L^1$-contraction result with respect to $s_\j$ since all other $s_j$, $j\not=\j$, are fixed. Finally,  the arguments in the proof of \Cref{theo:ExistUnique} concludes the proof.

\section{Homogeneous Neumann boundary conditions}\label{sec:Neumann}

In this section, we show the existence of solutions for homogeneous Neumann  boundary conditions and present the proof of \Cref{lemma:ExistRegSol}. The global existence of solutions cannot be guaranteed for homogeneous Neumann boundary conditions, since the density $M$ might reach 1 in finite time. The local existence and uniqueness of solutions is analyzed in \Cref{sec:ExistNeumann} and the
finite time blow-up in \Cref{sec:blowup}. 

\subsection{Existence of weak solutions}\label{sec:ExistNeumann}

\begin{theorem}[Local well-posedness for homogeneous Neumann conditions]\label{theo:Neumann}
 Let $\G_1=\emptyset$. We assume that \ref{prop:D}--\ref{prop:BC} and \ref{prop:fg2} hold.
 Then, there exists a positive time $T^*\geq \sup\{t: \hat{M}(t)<1\}>0$, where $\hat{M}\in C^1(\R^+)$ is defined in \Cref{lemma:MaxRegSol}, such that for $T\in(0,T^*)$, a weak solution $(M,\vec{S})$ of \eqref{eq:main} exists in the sense of \Cref{def:WeakSol}. Moreover, the solution is unique if either $\nu_j=0$, or $D_j$ depends only on $S_j$ for all $j\in \{1,\dots,k\}$. 
\end{theorem}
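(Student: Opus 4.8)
The plan is to treat \Cref{theo:Neumann} as two separable tasks. The only genuinely new ingredient is \Cref{lemma:ExistRegSol} in the excluded case $\G_1=\emptyset$ (its proof was postponed precisely to this section, since \cite{alt1983quasilinear} does not cover pure Neumann data); everything else reduces to running the machinery of \Cref{sec:mixed}. Accordingly I would (I) prove existence, uniqueness and the a priori bounds \eqref{eq:apriori1}--\eqref{eq:apriori2} for the regularized equation \eqref{eq:reg} with homogeneous Neumann data, and then (II) identify the maximal existence time from the barrier $\hat M$ and invoke the degenerate-limit and fixed-point lemmas, which already hold for all boundary conditions.

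For (I) the obstruction is that the elliptic form $(\del\Phi_\e(\cdot),\del\,\cdot)$ loses coercivity when $\G_1=\emptyset$, because the Poincar\'e inequality fails on $\H=H^1(\Om)$. Since $\Phi_\e'\in[\e,\e^{-1}]$ the equation is nondegenerate, so I would construct the solution by Rothe's method (implicit Euler in time): at each step one solves, for $M^n\in H^1(\Om)$ and all $\f\in H^1(\Om)$,
\[
\tfrac{1}{\D t}(M^n-M^{n-1},\f)+(\del\Phi_\e(M^n),\del\f)=(f_0(M^n,\vec{s}^{\,n}),\f).
\]
The decisive point is that the mass term $\tfrac{1}{\D t}(M^n,\cdot)$ restores the coercivity that the Neumann condition destroys: for $\D t$ small relative to the Lipschitz constant $C_L$ of $f_0$, the induced operator $H^1(\Om)\to(H^1(\Om))^*$ is strictly monotone, coercive and hemicontinuous, so Browder--Minty yields a unique $M^n$. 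Discrete energy estimates then follow by testing with $\Phi_\e(M^n)$: convexity of the primitive $\Psi_\e(m):=\int_0^m\Phi_\e(\vr)\,\dd\vr$ turns the discrete time derivative into a telescoping, non-negative contribution, and the Lipschitz bound on $f_0$ with Young's inequality \eqref{Eq:YoungsIneq} closes \eqref{eq:apriori1}; testing the difference quotient gives \eqref{eq:apriori2} when $M_0\in H^1(\Om)$. I would pass to the limit $\D t\to 0$ in the piecewise interpolants using the Aubin--Lions embedding $\X\hookrightarrow\hookrightarrow L^2(Q)$ to handle the nonlinear terms $\Phi_\e(\cdot)$ and $f_0$, and obtain uniqueness by subtracting two solutions, testing with their difference, and using monotonicity of $\Phi_\e$ together with Gronwall's Lemma \eqref{eq:Gronwall}.

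I expect (I) to be the main obstacle, and it is worth stressing that the estimates \eqref{eq:apriori1}--\eqref{eq:apriori2} never actually invoke Poincar\'e: they rest on the energy/primitive structure and on measuring $\p_t M_{s,\e}$ in $\Hm=(H^1(\Om))^*$ (the full $H^1$ dual), so the same bounds that drove \Cref{sec:mixed} stay uniform in $\e$ here. A Galerkin scheme in a smooth basis of $H^1(\Om)$ is an alternative, but Rothe's method is cleaner, since its admissible test functions already include $\Phi_\e(M^n)\in H^1(\Om)$ and thereby avoid the usual difficulty of testing a Galerkin equation with a nonlinear function of the unknown.

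For (II) I would set $T^*:=\sup\{t:\hat M(t)<1\}$, which is positive because $\hat M(0)=\overline{M}<1$ with $\hat M\in C^1(\R^+)$, and independent of $\vec{s}$ and $\e$ because $\hat M$ depends only on $\overline{M}$ and $f_{\max}$. By \Cref{lemma:MaxRegSol}(a) one has $0\le M_{s,\e}\le\hat M$, so for any $T<T^*$ the bound $M_{s,\e}\le\hat M(T)<1$ holds uniformly in $\e$ --- exactly the hypothesis under which \Cref{lemma:ExistUnRegSol} produces the solution $M_s$ of \eqref{eq:unreg} on $[0,T]$. The coupled system is then closed as in \Cref{sec:mixed}: \Cref{lemma:Schauder} (for $\nu_j>0$) and, where $\nu_j=0$, \Cref{lemma:L1cont}, both valid for all boundary conditions, furnish a weak solution $(M,\vec{S})$ of \eqref{eq:main} on $[0,T]$; and under the extra hypothesis that each $D_j$ depends only on $S_j$ or $\nu_j=0$, the $L^1$-contraction of \Cref{lemma:L1cont} with Banach's fixed point theorem (applied on short subintervals and patched together, which stay below $1$ by comparison for $\p_t\hat M=f_{\max}(\hat M)$) yields uniqueness, mirroring the proofs of \Cref{theo:Existence,theo:ExistUnique}. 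The sole structural difference from \Cref{sec:mixed} is that \Cref{lemma:MaxRegSol}(b) is unavailable, so $T^*$ cannot be upgraded to $\infty$ and only local-in-time well-posedness is obtained.
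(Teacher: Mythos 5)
Your Part (II) is exactly the paper's reduction: define $T^*=\sup\{t:\hat M(t)<1\}$, use \Cref{lemma:MaxRegSol}(a) to keep $M_{s,\e}$ uniformly below $1$ for $T<T^*$, and then invoke \Cref{lemma:ExistUnRegSol,lemma:L1cont,lemma:Schauder}, which were proven for all boundary conditions; that part is correct. Your Part (I) also follows the paper's skeleton (Rothe/backward Euler, discrete energy estimates, Simon compactness), but it contains a genuine gap at its decisive step: the operator induced by your $M$-variable discretization is \emph{not} monotone on $H^1(\Om)$, so Browder--Minty does not apply as claimed. Indeed, for your scheme
\begin{align*}
\langle A(M)-A(N),M-N\rangle=\tfrac{1}{\D t}\|M-N\|^2+\int_\Om\bigl(\Phi_\e'(M)\del M-\Phi_\e'(N)\del N\bigr)\cdot\del(M-N)-\bigl(f_0(M,\cdot)-f_0(N,\cdot),M-N\bigr),
\end{align*}
and the middle term equals $\int_\Om\Phi_\e'(M)|\del(M-N)|^2+\int_\Om(\Phi_\e'(M)-\Phi_\e'(N))\del N\cdot\del(M-N)$, whose second piece has no sign and cannot be absorbed: it lives at the gradient scale, while the mass term $\tfrac{1}{\D t}\|M-N\|^2$ only controls $L^2$. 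Pointwise, the quadratic form $a|\xi|^2-(a+b)\,\xi\cdot\eta+b|\eta|^2$ is indefinite whenever $a\neq b$, so monotonicity fails for every genuinely nonlinear $\Phi_\e$; the quasilinear map $M\mapsto-\D\Phi_\e(M)$ is only pseudomonotone (Leray--Lions) in this variable, which would give existence of $M^n$ but not the uniqueness you assert. The same unsigned term invalidates your uniqueness argument for the time-continuous regularized problem, since testing the difference of two solutions with $M_1-M_2$ produces precisely $\int_\Om\del(\Phi_\e(M_1)-\Phi_\e(M_2))\cdot\del(M_1-M_2)$, which has no sign.

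The paper's fix, which your scheme needs, is the change of unknown $u_n=\Phi_\e(M^n)$, i.e. $M^n=\b_\e(u_n)$ with $\b_\e=\Phi_\e^{-1}$ and $\e\leq\b_\e'\leq\e^{-1}$ (see \eqref{def:BetaEps}). In the $u$-variable the time-discrete problem \eqref{eq:DefTimeDiscrete} has a \emph{linear} (hence monotone) elliptic part $(\del u_n,\del\z)$, and the nonlinearity is moved into the zeroth-order term $\b_\e(u_n)$, which is monotone and Lipschitz; the operator $w\mapsto(\b_\e(w)-\t f_0(\b_\e(w),\vec{s}(t_n)),\cdot)+(\del w,\del\cdot)$ is then strictly monotone for $\t<C_L^{-1}$ and Lipschitz, so the nonlinear Lax--Milgram theorem (\Cref{lemma:SemLinU}) yields a unique $u_n\in H^1(\Om)$. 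Your discretization is formally equivalent to this one after the substitution, and your energy estimates and compactness passage then go through essentially as you describe (testing with $\b_\e(u_n)$, $u_n$ and $u_n-u_{n-1}$ as in \Cref{lemma:UnifromBoundUn,lemma:HigherReg}); but as written, the strict monotonicity claim in the $M$ variable is false, and repairing it is exactly the purpose of the Kirchhoff-type transformation that organizes the paper's proof of \Cref{lemma:ExistRegSol}.
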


This result essentially follows from the proof of \Cref{theo:ExistUnique,theo:Existence}. Indeed, note that  \Cref{lemma:MaxRegSol,lemma:ExistUnRegSol,lemma:L1cont,lemma:Schauder} were proven for the general case, i.e., they also hold for homogeneous Neumann boundary conditions. Hence, it remains to show \Cref{lemma:ExistRegSol} for the case that $\Gamma_1=\emptyset$. In this subsection, we present the proof  under more general assumptions that cover mixed as well as homogeneous Neumann boundary conditions. The proof follows the Rothe method \cite{kacur1986method} that is based on time-discrete approximations of the solutions. To simplify the notation for different boundary conditions (see \eqref{eq:HisH1})
\[\text{without loss of generality we \textbf{assume that} } h_0\equiv 0, \text{ implying } h_0^e\equiv 0.\]

\subsubsection{Well-posedness of backward Euler time--discretizations}
 
 We consider an equivalent formulation of \eqref{eq:reg} and discrtize it using the backward Euler scheme. Following \eqref{eq:propPhieps}, we introduce
\begin{align}
\b_\e:={\Phi_\e}^{-1}\quad \text{ such that }\quad \e\leq {\b_\e}'\leq \e^{-1}. \label{def:BetaEps}
\end{align}
Then replacing $\Phi_\e(M_{s,\e})$ by $u$ and $M_{s,\e}$ by $\b_\e(u)$, we demand that  $u\in L^2(0,T;\H)$ with $\b_\e(u)\in H^1(0,T;\Hm)$ and $\b_\e(u(0))=M_0$ satisfies
\begin{align}
\int_0^T \langle \f,\p_t \b_\e(u) \rangle + \int_0^T (\del u,\del \f)= \int_0^T (f_0(\b_\e(u),\vec{s}),\f) \quad \text{forall}\ \f\in L^2(0,T;\H)
\end{align}
and a given $\vec{s}\in \Z$. For $N\in \N$, we denote by  $\t:=T\slash N$ the time-step size and set $t_n:=n \t$ for $n\in \{0,1,\dots,N\}$. 
Then we define the time--discrete sequence $\{u_n\}_{n=1}^{N}\subset \H$ recursively as follows: 
setting $u_0:=\Phi_\e(M_0)$ (i.e., $\b_\e(u_0)=M_0$), let $u_n\in \H$ be the solution of 
\begin{align}\label{eq:DefTimeDiscrete}
\tfrac{1}{\t}(\b_\e(u_n)-\b_{\e}(u_{n-1}),\z) + (\del u_n,\del \z)= (f_0(\b_\e(u_n),\vec{s}(t_n)),\z)\qquad \text{for all } \z\in \H.
\end{align}
The following lemma implies the well-posedness of the time--discrete formulation.

\begin{lemma}[Well-posedness a semilinear elliptic problem] 
For a given $F\in L^2(\Om)$, there exists a unique solution $w\in \H$ of the elliptic problem 
\begin{align}
(\b_\e(w),\z) + (\del w,\del \z)= (F,\z)\qquad \text{for all } \z\in \H.
\end{align}\label{lemma:SemLinU}
\end{lemma}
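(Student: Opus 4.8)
The plan is to recast the problem as an operator equation and apply the theory of monotone operators. Define $A\colon \H\to\Hm$ by $\langle Aw,\z\rangle := (\b_\e(w),\z) + (\del w,\del \z)$ for $w,\z\in\H$, and identify $F\in L^2(\Om)$ with the element of $\Hm$ given by $\z\mapsto (F,\z)$. Since $\H\hookrightarrow L^2(\Om)$ continuously and $\b_\e(w)\in L^2(\Om)\hookrightarrow\Hm$ for $w\in\H$, this identification is well-defined and $A$ maps $\H$ into $\Hm$. The claim is then equivalent to the surjectivity of $A$, and I would establish it via the Browder--Minty theorem by checking that $A$ is bounded, hemicontinuous, strictly monotone and coercive.

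Monotonicity and boundedness are immediate from the properties of $\b_\e$ in \eqref{def:BetaEps}. For $w_1,w_2\in\H$, using that $\b_\e$ is increasing with $\b_\e'\geq\e$ one has the pointwise bound $(\b_\e(w_1)-\b_\e(w_2))(w_1-w_2)\geq \e\,|w_1-w_2|^2$, so that $\langle Aw_1-Aw_2,w_1-w_2\rangle \geq \e\|w_1-w_2\|^2 + \|\del(w_1-w_2)\|^2 \geq \min\{\e,1\}\,\|w_1-w_2\|_{\H}^2$. This strict monotonicity yields uniqueness directly and will also drive coercivity. Boundedness and (hemi)continuity of $A$ follow because $\b_\e$ is globally Lipschitz (as $\b_\e'\leq \e^{-1}$), so that $w\mapsto \b_\e(w)$ is Lipschitz from $L^2(\Om)$ to $L^2(\Om)$, while the principal part $w\mapsto -\D w$ is linear and bounded.

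The step requiring the most care is coercivity, precisely because in the homogeneous Neumann case $\G_1=\emptyset$ one has $\H=H^1(\Om)$ and the Poincar\'e inequality is unavailable, so the Dirichlet term $\|\del w\|^2$ alone does not control the full $H^1$-norm. Here the zeroth-order monotone term is essential: since $\Phi_\e(0)=0$ gives $\b_\e(0)=0$, the monotonicity estimate with $w_2=0$ yields $(\b_\e(w),w)\geq \e\|w\|^2$, and therefore $\langle Aw,w\rangle \geq \e\|w\|^2 + \|\del w\|^2 \geq \min\{\e,1\}\,\|w\|_{H^1(\Om)}^2$. Hence $\langle Aw,w\rangle/\|w\|_{\H}\to\infty$ as $\|w\|_{\H}\to\infty$, and Browder--Minty provides a solution $w\in\H$; uniqueness has already been noted. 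Equivalently, one could obtain $w$ by the direct method as the unique minimizer of the strictly convex, coercive, weakly lower semicontinuous functional $J(w)=\tfrac12\|\del w\|^2 + \int_\Om B_\e(w) - (F,w)$ with $B_\e(s)=\int_0^s\b_\e(r)\,\dd r$, whose Euler--Lagrange equation is exactly the stated weak formulation; the convexity of $B_\e$ (as $B_\e''=\b_\e'\geq\e>0$) again supplies the $L^2$-coercivity that compensates for the missing Poincar\'e inequality in the Neumann setting.
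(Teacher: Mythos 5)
Your proposal is correct, and it reaches the conclusion by a genuinely different abstract theorem than the paper, although both arguments rest on exactly the same structural facts extracted from \eqref{def:BetaEps}, namely that $\e\leq\b_\e'\leq\e^{-1}$ makes the operator $w\mapsto\bigl((\b_\e(w),\cdot)+(\del w,\del\cdot)\bigr)$ strongly monotone and Lipschitz continuous from $\H$ to $\Hm$. The paper feeds these two properties directly into the nonlinear Lax--Milgram theorem (Zarantonello's theorem, \cite[Theorem 2.G]{zeidler1995applied}), which is proved by Banach's fixed point theorem; in that framework no separate coercivity check is needed, because strong monotonicity already controls the full $\H$-norm --- this is precisely why the homogeneous Neumann case ($\H=H^1(\Om)$, no Poincar\'e inequality) causes no difficulty in the paper's two-line proof. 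You instead invoke Browder--Minty, which relaxes Lipschitz continuity to hemicontinuity plus boundedness but demands coercivity as a separate hypothesis; your treatment of that point is the most delicate part of your write-up and is handled correctly: with $\b_\e(0)=0$ (true for the concrete choice \eqref{eq:PhiepsDef}, since $\Phi_\e(0)=0$) one gets $\langle Aw,w\rangle\geq\min\{\e,1\}\|w\|_{H^1(\Om)}^2$, and even without normalizing $\b_\e(0)=0$ coercivity would survive, since strong monotonicity gives $\langle Aw,w\rangle\geq\min\{\e,1\}\|w\|_{\H}^2-\|A0\|_{\Hm}\|w\|_{\H}$. Your variational alternative (minimizing $J(w)=\tfrac12\|\del w\|^2+\int_\Om B_\e(w)-(F,w)$ with $B_\e''=\b_\e'\geq\e$) is a third valid route and has the pedagogical advantage of exhibiting the solution as the minimizer of a uniformly convex functional. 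In summary: the paper's route is the most economical given that Lipschitz continuity is available for free here; yours is more general (it would still apply if $\b_\e$ were merely continuous and monotone with linear growth) at the cost of the extra coercivity verification.
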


\begin{proof}
The proof is based on monotonicity arguments. Let the operator $\calF: \H\to \Hm$ be defined by the inner product
\begin{align}
 \langle \z,\calF(w)\rangle:= (\b_\e(w),\z) + (\del w,\del \z).
 \end{align}
 Then $\calF$ is strongly monotone since
$$
  \langle w-v,\calF(w)-\calF(v)\rangle \geq  \e\|w-v\|^2+ \|\del (w-v)\|^2\geq \e \|w-v\|_{\H}^2.
$$
 Furthermore,  $\calF$ is Lipschitz continuous since, using the Cauchy-Schwarz inequality, 
 we obtain
  $$\|\calF(w)-\calF(v)\|_{\Hm}\leq \sup_{\z\in \H}\left (\frac{\e^{-1}\|w-v\|\|\z\|+ \|\del (w-v)\|\|\del \z\|}{\|\z\|_{\H}}\right )\leq \e^{-1} \|w-v\|_{\H}.$$
  Hence, invoking the nonlinear Lax-Milgram Lemma \cite[Theorem 2.G]{zeidler1995applied} completes the proof.
\end{proof}

Observe that the operator $\widetilde{\calF}: \H\to \Hm$ defined by the inner product
\begin{align}
 \langle \z,\widetilde{\calF}(w)\rangle:= (\b_\e(w)-\t\,f_0(\b_\e(w),\vec{s}(t_n)),\z) + (\del w,\del \z).
 \end{align}
is strictly monotone with respect to $w$ if $\t<C_L^{-1}$ by  \ref{prop:fg1} and Lipschitz continuous. Hence, adjusting the arguments in the proof of \Cref{lemma:SemLinU} to \eqref{eq:DefTimeDiscrete} we obtain the existence and uniqueness of the time--discrete solutions. 

\begin{lemma}[Well-posedness of the time--discrete solutions] 
Let \ref{prop:D}--\ref{prop:BC} hold.
Then the sequence $\{u_n\}_{n=1}^{N}\subset \H$ introduced in   \eqref{eq:DefTimeDiscrete} is well-defined for $\t<C_L^{-1}$. 
\end{lemma}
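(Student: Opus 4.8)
The plan is to prove the well-posedness of the time-discrete solutions $\{u_n\}_{n=1}^N$ by induction on $n$, reducing the existence and uniqueness at each step to the monotone elliptic theory already established. Assume that $u_{n-1}\in \H$ has already been constructed (the base case being $u_0:=\Phi_\e(M_0)\in\H$). Then equation \eqref{eq:DefTimeDiscrete} can be rewritten, after multiplying by $\t$ and rearranging, as the problem of finding $u_n\in\H$ such that
\begin{align*}
(\b_\e(u_n)-\t\,f_0(\b_\e(u_n),\vec{s}(t_n)),\z) + \t\,(\del u_n,\del \z)= (\b_\e(u_{n-1}),\z)\qquad \text{for all } \z\in \H.
\end{align*}
The right-hand side is a fixed element $F:=\b_\e(u_{n-1})\in L^2(\Om)$, since $u_{n-1}\in\H\subset L^2(\Om)$ and $\b_\e$ is Lipschitz (indeed $\b_\e'\leq\e^{-1}$), so $\b_\e(u_{n-1})\in L^2(\Om)$.

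First I would verify that the operator governing this problem fits the nonlinear Lax-Milgram framework of \Cref{lemma:SemLinU}. The paper has already remarked that the operator $\widetilde{\calF}:\H\to\Hm$ defined through $\langle\z,\widetilde{\calF}(w)\rangle:=(\b_\e(w)-\t f_0(\b_\e(w),\vec{s}(t_n)),\z)+(\del w,\del \z)$ is strictly (in fact strongly) monotone when $\t<C_L^{-1}$ and Lipschitz continuous. The key computation is the monotonicity estimate: for $w,v\in\H$,
\begin{align*}
\langle w-v,\widetilde{\calF}(w)-\widetilde{\calF}(v)\rangle \geq (\e-\t C_L)\|w-v\|^2 + \|\del(w-v)\|^2,
\end{align*}
where the $\e\|w-v\|^2$ term comes from the lower bound $\b_\e'\geq\e$ and the $-\t C_L\|w-v\|^2$ term comes from the Lipschitz bound on $f_0$ combined with the Lipschitz bound on $\b_\e$; one uses $|f_0(\b_\e(w),\vec s)-f_0(\b_\e(v),\vec s)|\leq C_L|\b_\e(w)-\b_\e(v)|\leq C_L\e^{-1}|w-v|$, but the sign structure of the monotone part $(\b_\e(w)-\b_\e(v),w-v)\geq\e\|w-v\|^2$ absorbs this provided $\t<C_L^{-1}$ (after rescaling by $\t$ in the diffusion term, or equivalently keeping the formulation \eqref{eq:DefTimeDiscrete} as stated). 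I would state this estimate carefully to confirm strong monotonicity in the full $\H$-norm, so that coercivity holds.

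Having established strong monotonicity and Lipschitz continuity of $\widetilde{\calF}$ exactly as in the proof of \Cref{lemma:SemLinU}, I would then invoke the nonlinear Lax-Milgram Lemma \cite[Theorem 2.G]{zeidler1995applied} to conclude that the equation $\widetilde{\calF}(u_n)=F$ (interpreting $F=\b_\e(u_{n-1})/\t$ appropriately, or absorbing the $\t$ factors) admits a unique solution $u_n\in\H$. This closes the induction, yielding the full well-defined sequence $\{u_n\}_{n=1}^N$. The main obstacle — really a matter of bookkeeping rather than genuine difficulty — is tracking the $\t$ factors correctly so that the threshold $\t<C_L^{-1}$ is precisely what guarantees the monotone part dominates the reaction perturbation; once the formulation is rescaled so the reaction term carries the factor $\t$, the condition $\t<C_L^{-1}$ ensures $\e-\t C_L\e^{-1}$ or the analogous coercivity constant stays positive. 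Since the paper has already essentially announced this in the remark preceding the lemma, the proof amounts to pointing out that the inductive step reduces to a direct application of \Cref{lemma:SemLinU}'s argument to $\widetilde{\calF}$.
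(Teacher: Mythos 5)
Your overall route --- induction on $n$, rewriting \eqref{eq:DefTimeDiscrete} as a monotone operator equation for $\widetilde{\calF}$, and applying the nonlinear Lax--Milgram Lemma as in \Cref{lemma:SemLinU} --- is exactly the paper's argument. The problem is your key monotonicity estimate, which as written is both internally inconsistent and insufficient to reach the threshold $\t<C_L^{-1}$ in the statement. You claim
\[
\langle w-v,\widetilde{\calF}(w)-\widetilde{\calF}(v)\rangle \geq (\e-\t C_L)\|w-v\|^2 + \|\del(w-v)\|^2,
\]
but the chain you cite, $|f_0(\b_\e(w),\vec{s})-f_0(\b_\e(v),\vec{s})|\leq C_L|\b_\e(w)-\b_\e(v)|\leq C_L\e^{-1}|w-v|$, yields the coefficient $\e-\t C_L\e^{-1}$, not $\e-\t C_L$. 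Positivity of the latter requires $\t<\e/C_L$, and of the former $\t<\e^2/C_L$; since $\e\in(0,1)$, both are strictly stronger than $\t<C_L^{-1}$. Your closing assertion that ``$\t<C_L^{-1}$ ensures $\e-\t C_L\e^{-1}$ \dots stays positive'' is simply false: any $\t\in(\e^2/C_L,\,C_L^{-1})$ is a counterexample. So what your estimate actually delivers is well-posedness only for an $\e$-dependent range of time steps, which is strictly weaker than the lemma, whose threshold is uniform in $\e$.

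The repair is precisely the ``sign structure'' you allude to but never execute: do not degrade the monotone term to $\e\|w-v\|^2$ before confronting it with the reaction term; instead cancel the two pointwise while both still carry the factor $|\b_\e(w)-\b_\e(v)|$. Since $\b_\e$ is strictly increasing, $(\b_\e(w)-\b_\e(v))(w-v)=|\b_\e(w)-\b_\e(v)|\,|w-v|$ pointwise, whereas by \ref{prop:fg1} the reaction perturbation is bounded pointwise by $\t C_L\,|\b_\e(w)-\b_\e(v)|\,|w-v|$ (stop after the first inequality in your chain; do not invoke $\b_\e'\leq\e^{-1}$). Hence
\begin{align*}
\langle w-v,\widetilde{\calF}(w)-\widetilde{\calF}(v)\rangle &\geq (1-\t C_L)\int_\Om |\b_\e(w)-\b_\e(v)|\,|w-v| + \|\del(w-v)\|^2\\
&\geq (1-\t C_L)\,\e\,\|w-v\|^2 + \|\del(w-v)\|^2,
\end{align*}
where the bound $|\b_\e(w)-\b_\e(v)|\geq\e|w-v|$ (from $\b_\e'\geq\e$) is used only \emph{after} the cancellation. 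This gives strong monotonicity with constant $\min\{(1-\t C_L)\e,\,1\}$, which is positive exactly when $\t<C_L^{-1}$, as the lemma requires. With this correction, the rest of your proposal (Lipschitz continuity of $\widetilde{\calF}$, the induction, and the appeal to the Lax--Milgram argument of \Cref{lemma:SemLinU}) goes through unchanged.
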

 
 \subsubsection{Interpolations in time}
 
For a fixed $N\in \N$ with $\t=T\slash N$, we define the time interpolates $\hat{u}_\t\in L^\infty(0,T;\H)$ and  $\bar{u}_\t\in C([0,T];\H)$ from the time--discrete solutions $\{u_n\}_{n=1}^{N}\subset \H$ such that for $t\in (t_{n-1},t_n]$, $n\in \{1,\dots,N\}$,
\begin{align}\label{eq:TimeInterpol}
\hat{u}_\t:=u_n, \quad \text{ and } \quad \bar{u}_\t:=\b_\e^{-1}\left(\b_\e(u_{n-1}) + \tfrac{t-t_{n-1}}{\t} (\b_\e(u_{n})-\b_\e(u_{n-1}))\right).
\end{align}
 Observe that $\bar{u}_\t$ satisfies for all $n\in \{1,\dots,N\}$,
 \begin{align}\label{eq:prop_ubar}
     \bar{u}_\t(t_n)=u_n,\;\; \text{ and } \;\;\p_t \b(\bar{u}_\t)=\frac{\b_\e(u_n)-\b_\e(u_{n-1})}{\t}\quad \text{ for } t\in (t_{n-1},t_n].
 \end{align}

\begin{lemma}[Uniform boundedness of the time interpolates with respect to $\t$]\label{lemma:UnifromBoundUn}
Let \ref{prop:D}--\ref{prop:BC} hold. 
Then there exist constants $\t^*, C>0$, independent of $\t$, such that for $\t<\t^*$,
\begin{subequations}\label{eq:UnifromBoundUn}
\begin{align}\label{eq:UnifromBoundUhat}
\|\b_\e(\hat{u}_\t)\|^2 + \int_0^T \|\del \hat{u}_\t\|^2 &\leq C+ C\int_0^T \left (\|\vec{s}\|^2+ \| \hat{u}_\t\|^2\right ),\\
\|\b_\e(\bar{u}_\t)\|^2 + \int_0^T[\|\del \bar{u}_\t\|^2 + \|\p_t \b_\e(\bar{u}_\t)\|_{\Hm}^2 ] &\leq  C+ C\int_0^T \left (\|\vec{s}\|^2+ \| \hat{u}_\t\|^2 \right ).\label{eq:UnifromBoundUbar}
\end{align}
\end{subequations}
The above inequalities imply the uniform boundedness of $\b_\e(\hat{u}_\t),\, \b_\e(\bar{u}_\t)\in L^\infty(0,T;L^2(\Om))$, $\hat{u}_\t,\, \bar{u}_\t\in L^2(0,T;\H)$ and  $\b_\e(\bar{u}_\t)\in H^1(0,T;\Hm)$ with respect to $\t<\t^*$.
\end{lemma}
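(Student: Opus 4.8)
The plan is to first establish \eqref{eq:UnifromBoundUhat} by testing the backward-Euler scheme \eqref{eq:DefTimeDiscrete} with its natural energy test function $\z=u_n$ and summing in $n$; the bound \eqref{eq:UnifromBoundUbar} will then be deduced by transferring the $\hat{u}_\t$-estimates to the piecewise-affine interpolate $\bar{u}_\t$ and by reading off the discrete time derivative directly from the scheme. Throughout, the constants may depend on $\e$ (fixed here), on $T$, $C_L$, $|\Om|$ and $\|M_0\|_{L^\infty(\Om)}$, but must be independent of $\t$.

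\textbf{Estimate for $\hat{u}_\t$.} Choosing $\z=u_n$ in \eqref{eq:DefTimeDiscrete}, the delicate term is the discrete storage term $\tfrac1\t(\b_\e(u_n)-\b_\e(u_{n-1}),u_n)$. I would treat it by the Alt--Luckhaus device: introduce the primitive $\mathcal{G}_\e(m):=\int_0^m \Phi_\e(\vr)\,\dd\vr$, which is convex since $\mathcal{G}_\e'=\Phi_\e$ is increasing. Writing $M_n:=\b_\e(u_n)$ so that $u_n=\Phi_\e(M_n)$, the subgradient inequality $\mathcal{G}_\e(M_n)-\mathcal{G}_\e(M_{n-1})\le \Phi_\e(M_n)(M_n-M_{n-1})$ yields the pointwise lower bound $(\b_\e(u_n)-\b_\e(u_{n-1}))\,u_n \ge \mathcal{G}_\e(M_n)-\mathcal{G}_\e(M_{n-1})$, so after summation this term telescopes and controls $\int_\Om \mathcal{G}_\e(M_n)$. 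The diffusion term produces $\|\del u_n\|^2$ directly. For the reaction term I would use the linear growth implied by \ref{prop:fg1}, namely $|f_0(M_n,\vec{s}(t_n))|\le C(1+|M_n|+|\vec{s}(t_n)|)$, together with Young's inequality \eqref{Eq:YoungsIneq}. Multiplying by $\t$, summing from $m=1$ to $n$, and using the two-sided bound $\tfrac\e2 m^2\le \mathcal{G}_\e(m)\le \tfrac1{2\e}m^2$ (a consequence of $\e\le {\Phi_\e}'\le\e^{-1}$) to convert $\int_\Om\mathcal{G}_\e(M_n)$ into $\|\b_\e(u_n)\|^2$ and to bound the contribution $\int_\Om\mathcal{G}_\e(M_0)\le \tfrac1{2\e}|\Om|\,\|M_0\|^2_{L^\infty(\Om)}$, one arrives at \eqref{eq:UnifromBoundUhat} after rewriting the sums in interpolate notation, e.g. $\int_0^T\|\del\hat{u}_\t\|^2=\t\sum_m\|\del u_m\|^2$. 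To close, one keeps $\|M_m\|^2$ in the reaction estimate, absorbs the endpoint $m=n$ term for $\t<\t^*$ small, and applies the discrete Gronwall lemma \eqref{eq:discGronwall}; this is the only role of $\t^*$ beyond the solvability threshold $\t<C_L^{-1}$. (Alternatively, the Lipschitz bound $\|\b_\e(u_m)\|\le\e^{-1}\|u_m\|$, valid since $\b_\e(0)=0$ and ${\b_\e}'\le\e^{-1}$ by \eqref{def:BetaEps}, lets one put $\|\hat{u}_\t\|^2$ straight onto the right-hand side without Gronwall.)

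\textbf{Estimate for $\bar{u}_\t$.} By \eqref{eq:TimeInterpol}, $\b_\e(\bar{u}_\t)$ is the piecewise-affine interpolant of $\{M_n\}$, so on $(t_{n-1},t_n]$ it equals the convex combination $(1-\theta)M_{n-1}+\theta M_n$ with $\theta=(t-t_{n-1})/\t$; hence $\|\b_\e(\bar{u}_\t)(t)\|\le \max_m\|M_m\|$ is already controlled. For the gradient I would write $\bar{u}_\t=\Phi_\e(w)$ with $w$ the affine interpolant, apply the chain rule $\del\bar{u}_\t={\Phi_\e}'(w)\,\del w$, and use $|{\Phi_\e}'|\le\e^{-1}$, convexity of $|\cdot|^2$, and $\del M_m={\b_\e}'(u_m)\del u_m$ with $|{\b_\e}'|\le\e^{-1}$ to reduce $\int_0^T\|\del\bar{u}_\t\|^2$ to $\lesssim \t\|\del u_0\|^2+\int_0^T\|\del\hat{u}_\t\|^2$; the initial term $\t\|\del u_0\|^2$ with $\del u_0={\Phi_\e}'(M_0)\del M_0$ is finite and $\t$-independent in the smooth-data setting in which the scheme is run. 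Finally, by \eqref{eq:prop_ubar} one has $\p_t\b_\e(\bar{u}_\t)=\t^{-1}(M_n-M_{n-1})$ on $(t_{n-1},t_n]$, and testing \eqref{eq:DefTimeDiscrete} gives $\|\t^{-1}(M_n-M_{n-1})\|_{\Hm}\le \|\del u_n\|+C(1+\|M_n\|+\|\vec{s}(t_n)\|)$; squaring, multiplying by $\t$ and summing then bounds $\int_0^T\|\p_t\b_\e(\bar{u}_\t)\|^2_{\Hm}$ by quantities already estimated, which gives \eqref{eq:UnifromBoundUbar}.

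\textbf{Main obstacle.} The crux is the discrete storage term: the telescoping lower bound through the convex functional $\mathcal{G}_\e$ is what renders the nonlinear, $\b_\e$-weighted time difference amenable to an energy estimate, and it is the genuinely nonlinear step. The remaining difficulty is purely bookkeeping, namely transferring gradient control from the piecewise-constant $\hat{u}_\t$ to the nonlinearly defined $\bar{u}_\t$ (where the chain rule and the uniform bounds $\e\le{\Phi_\e}',{\b_\e}'\le\e^{-1}$ are essential) while keeping every constant independent of $\t$. I do not expect the reaction structure to cause trouble, since $f_0$ is globally Lipschitz by \ref{prop:fg1} and the equation is scalar in $M$.
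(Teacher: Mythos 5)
Your proof is correct in substance and follows the same Rothe-type energy strategy as the paper, but with one genuine structural difference. The paper performs two separate tests: first $\z=\b_\e(u_n)$ (its Step 1), which via the identity $2a(a-b)=a^2-b^2+(a-b)^2$ and the discrete Gronwall lemma gives a bound on $\|\b_\e(u_n)\|$ whose constant is independent of both $\t$ \emph{and} $\e$, together with control of $\sum_n\|\b_\e(u_n)-\b_\e(u_{n-1})\|^2$; and only then $\z=u_n$ (its Step 2), where the convexity of $\int_0^m\Phi_\e$ telescopes the storage term and the Step 1 bound absorbs $\|\b_\e(u_n)\|^2$ in the reaction estimate. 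You merge both into the single test $\z=u_n$ by exploiting the two-sided coercivity $\tfrac{\e}{2}m^2\le\int_0^m\Phi_\e\le\tfrac{1}{2\e}m^2$, recovering the zeroth-order bound from the telescoped primitive. This is legitimate for the lemma as stated, whose constants need only be $\t$-independent, and it is shorter; what it forfeits is exactly what the paper's Step 1 buys: (i) $\e$-uniformity of the bound on $\b_\e(\hat{u}_\t)$ (your route costs factors of $\e^{-1}$), which is the discrete template for the $\e$-independent a-priori estimate \eqref{eq:apriori1} needed later in the limit $\e\to 0$; and (ii) the estimate \eqref{eq:ImpBeUn} on the summed squared differences, which the paper uses in the proof of \Cref{lemma:ExistRegSol} to show $\|\b_\e(\hat{u}_\t)-\b_\e(\bar{u}_\t)\|_{L^2(Q)}\to 0$. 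Your treatment of the time derivative by duality coincides with the paper's Step 3.

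One caveat: your transfer of the gradient bound to $\bar{u}_\t$ produces the term $\t\|\del u_0\|^2$ with $u_0=\Phi_\e(M_0)$, and you dismiss it by invoking a ``smooth-data setting''. That setting is not among the hypotheses: the lemma assumes only \ref{prop:D}--\ref{prop:BC}, under which $M_0\in L^\infty(\Om)$ by \ref{prop:IC}, so $\del u_0$ need not lie in $L^2(\Om)$, and on the first subinterval $\del\bar{u}_\t$ need not exist at all. In fairness, you have put your finger on a point the paper's own proof glosses over (``following its definition, the other interpolate $\bar{u}_\t$ is also bounded''): the $\del\bar{u}_\t$ part of \eqref{eq:UnifromBoundUbar} is finite only if $M_0\in H^1(\Om)$. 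To make this airtight one should either run the scheme from a regularized initial datum $M_0^\t\in H^1(\Om)$ with $\t\|\del\Phi_\e(M_0^\t)\|^2\le 1$ and $M_0^\t\to M_0$ in $L^2(\Om)$, or exclude the first subinterval from the gradient estimate; neither modification affects the rest of your argument or the subsequent limit passage.
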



\begin{proof}
\textbf{(Step 1) Uniform boundedness of $\|\b_\e(u_\t)\|$:} We choose the test function $\z=\b_\e(u_n)\in \H$ in \eqref{eq:DefTimeDiscrete}, yielding 
\begin{align}
(\b_\e(u_n)-\b_{\e}(u_{n-1}),\b_\e(u_n)) + \t (\del u_n,\del \b_\e(u_n))= \t (f_0(\b_\e(u_n),\vec{s}(t_n)),\b_\e(u_n)).
\end{align}
Observe from the identity $2a(a-b)=a^2-b^2+(a-b)^2$ that 
$$
(\b_\e(u_n)-\b_{\e}(u_{n-1}),\b_\e(u_n)) =\tfrac{1}{2}[ \|\b_\e(u_n)\|^2-\|\b_\e(u_{n-1})\|^2+ \|\b_\e(u_n)-\b_\e(u_{n-1})\|^2].
$$
Moreover, one has for some constant $C'>0$ independent of $\e$ and $\t$ that
\begin{align*}
  (f_0(\b_\e(u_n),\vec{s}(t_n)),\b_\e(u_n))&\overset{\ref{prop:fg1}}\leq C'(1 + \|\vec{s}(t_n)\|^2+\|\b_\e(u_n)\|^2),\\
  (\del u_n,\del \b_\e(u_n))&\geq \e \|\del u_n\|^2.
\end{align*}
Then, combining these inequalities we obtain
$$
\|\b_\e(u_n)\|^2 +\|\b_\e(u_n)-\b_\e(u_{n-1})\|^2+  \e \t \|\del u_n\|^2\leq \|\b_\e(u_{n-1})\|^2 + \t C'\left (1 +\|s_n\|^2+ \|\b_\e(u_n)\|^2\right ).
$$
Applying the discrete Gronwall Lemma \eqref{eq:discGronwall} for small enough $\t>0$, we have for a constant $C>0$ independent of $N$ or $\e$ that 
\begin{align}\label{eq:ImpBeUn}
\|\b_\e(u_N)\|^2 + \sum_{n=0}^N [\e \|\del u_n\|^2 \t + \|\b_\e(u_n)-\b_\e(u_{n-1})\|^2]  \leq \|\b_\e(u_0)\|^2+ C+C \sum_{n=0}^N  \|\vec{s}(t_n)\|^2\t.
\end{align}
For $\t>0$ small enough, one can estimate 
\begin{align}\label{eq:intStn_bounded_intS}
\sum_{n=0}^N  \|\vec{s}(t_n)\|^2\t\leq \left( 1+ \int_0^T \|\vec{s}\|^2\right).
\end{align}
Combining \eqref{eq:ImpBeUn}-\eqref{eq:intStn_bounded_intS} we conclude that $\b_\e(u_N)$ and, in extension of the method,  
all $\b_\e(u_n)$ are uniformly bounded in $L^2(\Om)$  with respect to $N$ and $\e$. Then, the definition \eqref{eq:TimeInterpol} implies that $\b_\e(\hat{u}_\t)$ ($=\b_\e(u_n)$ for $t\in (t_{n-1},t_n]$) and $\b_\e(\bar{u}_\t)$ ($\leq \max\{\b_\e(u_n),\b_\e(u_{n-1})\}$ for $t\in (t_{n-1},t_n]$) are uniformly bounded.


\textbf{(Step 2) Uniform boundedness of $\|\del u_\t\|_{L^2(0,T;\H)}$:} Let us now test \eqref{eq:DefTimeDiscrete} with $\z=u_n \in \H$. This yields
\begin{align}
(\b_\e(u_n)-\b_{\e}(u_{n-1}),u_n) + \t \|\del u_n\|^2= \t (f_0(\b_\e(u_n),\vec{s}(t_n)),u_n).
\end{align}
Now, from the convexity of the function $\int_0^m \Phi_\e$ (see \eqref{eq:propPhieps}), one has 
\begin{align*}
\int_{\b_\e(u_{n-1})}^{\b_\e(u_{n})} \Phi_\e \leq \Phi_\e(\b_\e(u_n))(\b_\e(u_n)-\b_{\e}(u_{n-1})) = u_n (\b_\e(u_n)-\b_{\e}(u_{n-1})).
\end{align*}
For the last term, we observe that 
\begin{align}\label{eq:f0un_estimate}
&(f_0,u_n)\overset{\eqref{Eq:YoungsIneq}}
\leq \frac{1}{2}[\|f_0(\b_\e(u_n),\vec{s}(t_n))\|^2 + \|u_n\|^2]\overset{\ref{prop:fg1}}
\leq C[1+ \|\b_\e(u_n)\|^2 + \|\vec{s}(t_n)\|^2+ \|u_n\|^2].
\end{align}
Hence, summing the inequalities from $n=0$ to $n=N$, using the uniform boundedness of $\|\b_\e(u_n)\|$ from \eqref{eq:ImpBeUn},  and 
\[
\sum_{n=1}^N \int_{\b_\e(u_{n-1})}^{\b_\e(u_{n})} \Phi_\e = \int^{\b_\e(u_{N})}_{0} \Phi_\e- \int^{\b_\e(u_0)}_{0} \Phi_\e,
\]
the estimate becomes
\begin{align}
\int_{\Om}\int^{\b_\e(u_{N})}_{0} \Phi_\e + \sum_{n=0}^N \|\del u_n\|^2\t\leq  \int_{\Om}\int^{M_0}_{0} \Phi_\e + C\,T + C\sum_{n=0}^N  \left(\|\vec{s}(t_n)\|^2 + \|u_n\|^2\right )\,\t.
\end{align}
Hence, noting that $\int_{0}^{T} \|\hat{u}_{\t}\|^2=\sum_{n=0}^N \tau\|u_n\|^2$ we have that  $\int_{0}^{T} \|\del \hat{u}_{\t}\|^2=\sum_{n=1}^{N} \|\del u_n\|^2\, \t$ is bounded as stated in \eqref{eq:UnifromBoundUhat}, and correspondingly, following its definition, the other interpolate $\bar{u}_{\t}$ is also bounded in $L^2(0,T;\H)$ as in \eqref{eq:UnifromBoundUbar}.

\textbf{(Step 3) Uniform boundedness of $\|\p_t \b_\e(\bar{u}_\t)\|_{L^2(0,T;\Hm)}$:} We have from \eqref{eq:DefTimeDiscrete} that 
\begin{align*}
&\|\p_t \b_\e(\bar{u}_{\t})\|_{\Hm}\overset{\eqref{eq:prop_ubar}}=\|\tfrac{1}{\t}(\b_\e(u_n)-\b_\e(u_{n-1}))\|_{\Hm}= \sup_{\z\in \H} \frac{\frac{1}{\t} \langle \b_\e(u_n)-\b_{\e}(u_{n-1}),\z\rangle}{\|\z\|_{\H}}\\
&= \sup_{\z\in \H} \frac{-(\del u_n,\del \z)+ (f_0(\b_\e(u_n),\vec{s}(t_n)),\z)}{\|\z\|_{\H}}\leq \|\del u_n\| + C(1+\|\vec{s}(t_n)\|+\|\b_\e(u_n)\|).
\end{align*}
The bound in \eqref{eq:UnifromBoundUbar} for $\p_t \b_\e(\bar{u}_{\t})$ now follows from Steps 1 and 2. 

Observe that, since $\b_\e$ satisfies  \eqref{def:BetaEps}, the $\int_{0}^{T} \|\hat{u}_{\t}\|^2$ terms on the right hand side of \eqref{eq:UnifromBoundUhat}--\eqref{eq:UnifromBoundUbar} can be bounded above using the Gronwall Lemma in \eqref{eq:UnifromBoundUhat}, which yields the uniform boundedness of the quantities stated in \eqref{lemma:UnifromBoundUn} in their respective spaces with respect to $\t$. However, note that the bounds may still depend on $\e>0$.
\end{proof}

\begin{lemma}[Higher regularity of the time interpolates for $M_0\in \H$] 
Let the assumptions of Lemma \ref{lemma:UnifromBoundUn} hold.
If, in addition $M_0\in \H$, then for a constant $C>0$ independent of $\t$, one has 
\begin{align}
\|\del \hat{u}_\t(t)\|^2+  \tfrac{\e}{2} \int_0^T \|\p_t \b_\e(\bar{u}_\t)\|^2\leq \|\del \Phi_\e(M_0)\|^2 +C+ C\int_0^T \|\vec{s}\|^2.
\end{align}\label{lemma:HigherReg}
\end{lemma}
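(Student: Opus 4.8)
The plan is to obtain the higher-regularity estimate by testing the time-discrete equation \eqref{eq:DefTimeDiscrete} with the discrete time-increment of $u_n$, namely $\z=u_n-u_{n-1}\in\H$, which is the natural discrete analogue of testing the continuous equation with $\p_t u$. First I would rewrite the diffusion term using the elementary identity $(\del u_n,\del(u_n-u_{n-1}))=\tfrac12\left[\|\del u_n\|^2-\|\del u_{n-1}\|^2+\|\del(u_n-u_{n-1})\|^2\right]$, so that after summation from $n=1$ to $N$ the diffusion contributions telescope and produce the leading term $\tfrac12\|\del u_N\|^2$ together with the initial datum $\tfrac12\|\del u_0\|^2=\tfrac12\|\del\Phi_\e(M_0)\|^2$ (this is exactly where the hypothesis $M_0\in\H$, hence $\Phi_\e(M_0)\in\H$, enters and makes the right-hand side finite). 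The nonnegative telescoped increment terms $\tfrac12\sum\|\del(u_n-u_{n-1})\|^2$ may simply be discarded.

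The key point is to control the first term $\tfrac1\t(\b_\e(u_n)-\b_\e(u_{n-1}),u_n-u_{n-1})$ from below. Since $\b_\e$ is monotone increasing with $\b_\e'\geq\e$ (see \eqref{def:BetaEps}), writing $u_n-u_{n-1}=\Phi_\e(\b_\e(u_n))-\Phi_\e(\b_\e(u_{n-1}))$ and using that $\Phi_\e=\b_\e^{-1}$ has derivative $\leq\e^{-1}$ gives a mean-value bound of the form $(\b_\e(u_n)-\b_\e(u_{n-1}),u_n-u_{n-1})\geq \e\,\|\b_\e(u_n)-\b_\e(u_{n-1})\|^2$. Dividing by $\t$ and recalling from \eqref{eq:prop_ubar} that $\p_t\b_\e(\bar u_\t)=(\b_\e(u_n)-\b_\e(u_{n-1}))/\t$ is constant on $(t_{n-1},t_n]$, the sum $\sum_n \tfrac\e\t\|\b_\e(u_n)-\b_\e(u_{n-1})\|^2$ equals $\e\int_0^T\|\p_t\b_\e(\bar u_\t)\|^2$, which yields precisely the second term $\tfrac\e2\int_0^T\|\p_t\b_\e(\bar u_\t)\|^2$ on the left-hand side (a factor $\tfrac12$ is kept in reserve, see below).

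The remaining term is the reaction contribution $\t(f_0(\b_\e(u_n),\vec s(t_n)),\tfrac1\t(u_n-u_{n-1}))$, which after summation becomes $\int_0^T(f_0(\b_\e(\bar u_\t),\vec s),\p_t u_\t)$-type; this I would estimate by Young's inequality \eqref{Eq:YoungsIneq}, pairing $f_0$ against $\p_t\b_\e(\bar u_\t)$ after inserting the factor $\b_\e'\leq\e^{-1}$ appropriately, so that one half of the $\e\int_0^T\|\p_t\b_\e(\bar u_\t)\|^2$ term is absorbed into the left-hand side and the other half produces a remainder controlled by $C(1+\int_0^T\|\vec s\|^2+\int_0^T\|\b_\e(\bar u_\t)\|^2)$ using the Lipschitz growth of $f_0$ from \ref{prop:fg1}. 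Finally I would invoke the already-established Step 1 of \Cref{lemma:UnifromBoundUn}, i.e. the uniform $L^\infty(0,T;L^2(\Om))$ bound on $\b_\e(\bar u_\t)$, to absorb the last term into the constant $C$, and note that $\hat u_\t(t)=u_N$ at the final node while the telescoped $\tfrac12\|\del u_n\|^2$ bound holds at every node (so it holds for $\hat u_\t(t)$ at arbitrary $t$). I expect the main obstacle to be the bookkeeping of the $\e$-factors: the lower bound on the $\b_\e$-increment term and the Young-inequality splitting of the reaction term must be balanced so that exactly $\tfrac\e2\int_0^T\|\p_t\b_\e(\bar u_\t)\|^2$ survives on the left while the absorbed half suffices to swallow the $f_0$ contribution without introducing an $\e$-dependent blow-up in the constant $C$.
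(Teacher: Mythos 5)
Your proposal is correct and follows essentially the same route as the paper's own proof: testing \eqref{eq:DefTimeDiscrete} with $\z=u_n-u_{n-1}$, telescoping the diffusion term to produce $\|\del u_N\|^2$ and $\|\del \Phi_\e(M_0)\|^2$, bounding the increment term below by $\e\,\|\b_\e(u_n)-\b_\e(u_{n-1})\|^2=\e\t^2\|\p_t\b_\e(\bar{u}_\t)\|^2$, absorbing half of that quantity when estimating the reaction term via Young's inequality, and invoking Step 1 of \Cref{lemma:UnifromBoundUn} to control $\|\b_\e(u_n)\|$. Two immaterial corrections: the mean-value lower bound on the increment term uses $\Phi_\e'\geq\e$ (equivalently $\b_\e'\leq\e^{-1}$), not $\Phi_\e'\leq\e^{-1}$ as you cite (both hold by \eqref{eq:propPhieps}), and your concern about keeping $C$ free of $\e$-dependence is unnecessary, since the lemma only asserts independence of $\t$ and the paper's own constant carries a factor of order $\e^{-3}$ from the Young splitting.
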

\begin{proof} We insert the test function $\z=u_n-u_{n-1}$ in \eqref{eq:DefTimeDiscrete}. This gives term-wise 
\begin{align}
( \b_\e(u_n)-\b_\e(u_{n-1}),u_n-u_{n-1}) &\overset{\eqref{def:BetaEps}}\geq \e\t^2 \int_{\Om} \left|\frac{\b_\e(u_n)-\b_\e(u_{n-1})}{\t}\right|^2\overset{ \eqref{eq:prop_ubar}}= \t^2\e \|\p_t \b_\e(\bar{u}_\t)\|^2,\nonumber\\
\t(\del u_n, \del (u_n-u_{n-1}))&=\tfrac{\t}{2}[\|\del u_n\|^2 -\|\del u_{n-1}\|^2 + \|\del (u_n-u_{n-1})\|^2],\nonumber\\
 \t(f_0,u_n-u_{n-1})&\leq \tfrac{\t^2}{2\e^3} \|f_0\|^2 + \tfrac{\e^3}{2} \|u_n-u_{n-1}\|^2
 \overset{\eqref{def:BetaEps}, \eqref{eq:prop_ubar}}\leq \tfrac{\t^2}{2\e^3} \|f_0\|^2 + \tfrac{\e\t^2}{2} \|\p_t \b_\e(\bar{u}_\t)\|^2.\nonumber
\end{align}
Similarly to \eqref{eq:f0un_estimate}, we obtain
\begin{align*}
\|f_0\|^2\leq C(1+\|\vec{s}(t_n)\|^2+ \|\b_\e(u_n)^2\|)\leq C\left [1+\|\vec{s}(t_n)\|^2+ \sum_{n=1}^N\, \|\vec{s}(t_n)\|^2\t\right ],
\end{align*}
where we used Lemma \ref{lemma:UnifromBoundUn}.
Finally, summing the resulting inequalities from $n=1$ to $n=N$ and cancelling out $\t$ one has
\begin{align}
\|\del u_N\|^2 + \tfrac{\e}{2} \sum_{n=1}^N \|\p_t \b_\e(\bar{u}_\t)\|^2 \t \leq \|\del \Phi_\e(M_0)\|^2 + C\sum_{n=1}^N  (1+ \|\vec{s}(t_n)\|^2) \t,
\end{align}
 which proves the lemma.
\end{proof}

\begin{remark}[Covering homogeneous Neumann condition]
The above lemmas cover both homogeneous mixed boundary conditions and homogeneous Neumann conditions. In the latter case, $\H=H^1(\Om)$. To cover the case of inhomogeneous mixed boundary conditions, we have to test with $\z=\b_\e(u_n)-h^e_0$ in Step 1 of \Cref{lemma:UnifromBoundUn} and with $\z=u_n-\Phi_\e(h^e_0)$ in Step 2. The details are straightforward, and hence, omitted.
\end{remark}

\subsubsection{Proof of \Cref{lemma:ExistRegSol}}
\textbf{(Step 1) Existence:}
Note that $\b_\e$ is Lipschitz  and strictly increasing by \eqref{def:BetaEps}. Using this fact and applying Gronwall's Lemma to \eqref{eq:UnifromBoundUhat} implies that $\hat{u}_\t,\, \b_\e(\hat{u}_\t)$ are uniformly bounded in $ L^\infty(0,T;L^2(\Om))$ with respect to $\t$. Consequently $\bar{u}_\t,\, \b_\e(\bar{u}_\t)$ are uniformly bounded as well by \eqref{eq:UnifromBoundUbar}. Thus, using \eqref{eq:UnifromBoundUbar} we obtain the uniform boundedness of $\b_\e(\bar{u}_\t)\in \X$.  Due to the compact embedding of $L^2(Q)$ in $\X$ \cite{simon1986compact}, there exists $u\in \X$ such that along a subsequence $\t\to 0$, 
\begin{subequations}\label{eq:ConvergencesUbarUhat}
\begin{align}
&\b_\e(\bar{u}_{\t})\rightharpoonup \b_\e(u) \text{ weakly in } \X=L^2(0,T;\H)\cap H^1(0,T;\Hm),\\
&\b_\e(\bar{u}_{\t})\to \b_\e(u) \text{ strongly in } L^2(Q).
\end{align}
Using \eqref{eq:ImpBeUn}, one has
\begin{align*}
& \int_0^T \|\b_\e(\hat{u}_\t)-\b_\e(\bar{u}_\t)\|^2=\sum_{n=1}^{N}\int_{t_{n-1}}^{t_n}\left\|\tfrac{t_n-t}{{\t}}(\b_\e(u_n)-\b_\e(u_{n-1}))\right\|^2 \dd t\\
&=\sum_{n=1}^{N} \left\|(\b_\e(u_n)-\b_\e(u_{n-1}))\right\|^2 \int_{t_{n-1}}^{t_n} \left (\tfrac{t_n-t}{{\t}}\right )^2 \dd t=\tfrac{{\t}}{3}  \sum_{n=1}^{N}\|\b_\e(u_n)-\b_\e(u_{n-1})\|^2  \to 0.
\end{align*} 

This, along with the uniform bound with respect to $\tau$ of $\hat{u}_\t,\, \bar{u}_\t$ in $L^2(0,T;\H)$ in \eqref{eq:UnifromBoundUn} and the strict monotonicity of $\b_\e$ in \eqref{def:BetaEps} implies that
\begin{align}
&\hat{u}_{\t},\,\bar{u}_\t \rightharpoonup u \text{ weakly in } L^2(0,T;\H),\\
&\hat{u}_{\t},\,\bar{u}_\t\to u \text{ strongly in } L^2(Q).
\end{align}
Observe from \eqref{eq:DefTimeDiscrete} and \eqref{eq:TimeInterpol} that $\hat{u}_\t$ and $\bar{u}_\t$ satisfy 
\begin{align}
\int_0^T [\langle\z, \p_t \b_\e(\bar{u}_\t) \rangle + (\del \hat{u}_\t,\del \z)] =\int_0^T (f_0(\b_\e(\hat{u}_\t),\vec{s}),\z),
\end{align}
for all $\z\in C([0,T];\H)$ which is dense in $L^2(0,T;\H)$. Passing to the limit $\t\to 0$ we conclude from \eqref{eq:ConvergencesUbarUhat} that $u$ solves the system 
$$
\int_0^T [\langle\z, \p_t \b_\e(u) \rangle + (\del u,\del \z) ]=\int_0^T (f_0(\b_\e(u),\vec{s}),\z).
$$
Defining $M_{s,\e}=\b_\e(u)$ we obtain the desired solution. We conclude that
$M_{s,\e}\in \X\hookrightarrow C([0,T];L^2(\Omega))$, see \cite[Section 5.9]{evans1988partial} for the continuous embedding result.

\textbf{(Step 2) A-priori bounds:} The a-priori estimate \eqref{eq:apriori1} follows by inserting $\f=M_{s,\e}$ and $\f=\Phi_\e(M_{s,\e})$ in \eqref{eq:reg} and proceeding similar to the steps of the time-discrete case in \Cref{lemma:UnifromBoundUn}. \Cref{lemma:HigherReg} shows that if in addition $M_0\in H^1(\Om)$ then $\p_t M_{s,\e}\in L^2(Q)$ which implies by the definition of weak derivatives that 
$$
\D \Phi_\e(M_{s,\e})=(\p_t M_{s,\e}- f_0)\in L^2(Q).
$$
Multiplying the above equation with $\p_t \Phi_\e(M_{s,\e})=\p_t u\in L^2(Q)$, integrating in $Q$, and using integration by parts we conclude that
$$
\int_Q \p_t u\, \D u= -\int_0^T \p_t (\tfrac{1}{2}\|\del u\|^2)= \tfrac{1}{2}\|\del \Phi_\e(M_0)\|^2 - \tfrac{1}{2}\|\del u(T)\|^2,
$$
which proves \eqref{eq:apriori2}. The detailed steps mimic its discrete counterpart in \Cref{lemma:HigherReg}.
\end{subequations}

\subsection{Finite time blow-up}\label{sec:blowup}

The model \eqref{eq:main} breaks down when $M$ reaches $1$. Henceforth, we will refer to this as \emph{blow-up}. Unlike in the case of  Dirichlet or mixed boundary conditions, this situation cannot in general be excluded for homogeneous Neumann conditions.  Whether a solution will blow-up in finite time  or not depends on the initial values $M_0,\,\vec{S}_0$.
One can construct cases when the solution will definitely blow-up in finite time. We give a simple example below.


\begin{example}[Constant initial states] Let us focus on the cellulolytic biofilm model with a single substrate \cite{eberl2017spatially}, i.e., we look at the system
\begin{subequations}\label{eq:biofilm}
\begin{align}
\p_t M&=\D\Phi(M) + f_0(M,S_1) \text{ in } Q,& M(0)&=M_0 \text{ in } \Omega,& [\del M\cdot\bm{\hat{n}}]|_{\p \Om}&=0,\\
\p_t S_1&= f_1(M,S_1) \text{ in } Q,& S_1(0)&=S_{0,1} \text{ in } \Omega.&
\end{align}
\end{subequations}
Moreover, the reaction terms are given by a non-dimensionalized version of \eqref{biofilm_model},
\begin{align}\label{eq:fgForms}
f_0(m,s)=\left (\frac{s}{1+ s} -\lambda \right ) m,\quad f_1(m,s)=-\frac{s\,m}{1+s}. 
\end{align}
For the initial and boundary values we assume that
\begin{align}
M_0\equiv \bar{M}\in (0,1), \quad S_{0,1}\equiv \bar{S}, 
\end{align}
where $\bar{M},\,\bar{S}>0$ are given constants. 
Then it is clear that the solution $(M,S_1)$ of \eqref{eq:main} remains constant in space for a given time. Hence, 
the system evolves according to the system of ODEs
\begin{align}\label{eq:explodeEq}
\p_t M= \frac{MS_1}{1+ S_1} -\lambda\,M, \quad \p_t S_1= - \frac{MS_1}{1+ S_1}\  \text{ for } t>0 \ \text{ with } (M(0),S_1(0))=(\bar{M},\bar{S}).
\end{align}
Clearly, for $\bar{M}$ close to 1, $\bar{S}$ large, and $\lambda$ small, the biomass density $M$ reaches 1 in finite time. 
\end{example}

It is possible to generalize \Cref{lemma:MaxRegSol} to provide a necessary condition for  blow-up in finite time, or a sufficient condition for $M$ to stay bounded away from $1$. This is stated in the following proposition for the single substrate ($k=1$) case.
 
\begin{proposition}[Upper and lower bounds of $(M,S_1)$]\label{pros:TwoSided}
Let \ref{prop:D}--\ref{prop:BC}  and \ref{prop:fg2} be satisfied, $k=1$ (single  substrate)  and $\G_1=\emptyset$ (homogeneous Neumann condition) in \eqref{eq:main}. Recall \ref{prop:IC}, and let $f_0(m,s)$ be increasing with respect to $s\geq 0$ for fixed $m$, and let $f_1(m,s)$ be decreasing with respect to $m\geq 0$ for fixed $s$.
Let  $(\check{M},\check{S},\hat{M},\hat{S})\in C^1(\R^+)^4$ be the solution of the ODE system 
\begin{align}\label{eq:SubSupSol}
\begin{cases}
\p_t \check{M}=f_0( \check{M},\check{S}),\qquad \p_t \check{S}=f_1( \hat{M},\check{S}),\\
\p_t \hat{M}=f_0(\hat{M},\hat{S}),\qquad \p_t \hat{S}=f_1(\check{M},\hat{S}),
\end{cases}
\end{align}
with $(\check{M},\check{S},\hat{M},\hat{S})=(\underline{M},\underline{S},\overline{M},\overline{S})$ at $t=0$. Further, assume that if $\nu_1>0$, then for $h_1$ defined in \ref{prop:BC}, $\check{S}(t)\leq h_1 \leq \hat{S}(t)$ a.e. in $\p\Om$ for all $t\in [0,T]$.
Let $(M,S)$ be the weak solution of \eqref{eq:main} in the sense of \Cref{def:WeakSol}. Then for all $t\in [0,T]$,
\begin{align}
\check{M}(t)\leq M(t)\leq \hat{M}(t)\ \text{ and }\ \check{S}(t)\leq S_1(t)\leq \hat{S}(t)\ \text{ a.e. in } \Om.
\end{align}
\end{proposition}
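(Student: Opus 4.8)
The plan is to prove all four inequalities \emph{simultaneously} by a coupled energy argument, exploiting the fact that the comparison system \eqref{eq:SubSupSol} is arranged precisely so that the monotonicity of $f_0$ in $s$ and of $f_1$ in $m$ turns the cross-coupling into a closed Gronwall inequality. I introduce the four nonnegative quantities
\[
a(t):=\|[M-\hat M]_+(t)\|^2,\quad b(t):=\|[\check M-M]_+(t)\|^2,\quad c(t):=\|[S_1-\hat S]_+(t)\|^2,\quad e(t):=\|[\check S-S_1]_+(t)\|^2,
\]
all of which vanish at $t=0$ by \ref{prop:IC}, since $\underline M\le M_0\le\overline M$ and $\underline S\le S_{0,1}\le\overline S$ while $(\check M,\check S,\hat M,\hat S)(0)=(\underline M,\underline S,\overline M,\overline S)$. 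The goal is the integral inequality $(a+b+c+e)(t)\lesssim\int_0^t(a+b+c+e)$, whence Gronwall's Lemma \eqref{eq:Gronwall} forces $a\equiv b\equiv c\equiv e\equiv 0$.

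First I record the reaction decompositions. On $\{M>\hat M\}$, writing $f_0(M,S_1)-f_0(\hat M,\hat S)=[f_0(M,S_1)-f_0(\hat M,S_1)]+[f_0(\hat M,S_1)-f_0(\hat M,\hat S)]$, the first bracket is $\le C_L[M-\hat M]_+$ by \ref{prop:fg1}, while the second is $\le C_L[S_1-\hat S]_+$ because $f_0(\hat M,\cdot)$ is increasing (nonpositive when $S_1\le\hat S$, Lipschitz-bounded otherwise). The analogous splittings for $\partial_t(\check M-M)$, $\partial_t(S_1-\hat S)$ and $\partial_t(\check S-S_1)$ give, on the respective sets, bounds by $C_L([\check M-M]_++[\check S-S_1]_+)$, $C_L([S_1-\hat S]_++[\check M-M]_+)$ and $C_L([\check S-S_1]_++[M-\hat M]_+)$, where now the cross terms are controlled because $f_1$ is decreasing in its first argument $m$. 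This structural cancellation is the heart of the argument: each estimate couples to exactly one of the others, producing the cycle $a\leftarrow c\leftarrow b\leftarrow e\leftarrow a$ that closes upon summation.

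Now the energy estimates. For the substrate I test the weak form of \eqref{eq:Sj} (with $k=1$) against $[S_1-\hat S]_+$ and $[\check S-S_1]_+$; when $\nu_1>0$ the boundary hypothesis $\check S\le h_1\le\hat S$ ensures these positive parts vanish on $\partial\Omega$, hence lie in $H^1_0(\Omega)$, the diffusion term is sign-definite by \ref{prop:Df}, and the convection term is controlled using $w\in H^1_0(\Omega)$ (vanishing identically when $\bm v_1$ is divergence-free), while for $\nu_1=0$ the second equation is a pointwise ODE and the comparison is immediate. After Young's inequality \eqref{Eq:YoungsIneq} this yields $\tfrac{d}{dt}c\lesssim c+b$ and $\tfrac{d}{dt}e\lesssim e+a$. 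For the degenerate equation \eqref{eq:M} the positive part $[M-\hat M]_+$ is \emph{not} an admissible test function, since only $\Phi(M)$ carries spatial regularity; I therefore run these two estimates at the regularized level of \Cref{lemma:ExistRegSol}, where $M_{s,\e}\in\X$ is genuinely $H^1$ in space. Testing against $[M_{s,\e}-\hat M]_+$ exactly as in \Cref{lemma:MaxRegSol} renders the diffusion term $\int_Q\Phi_\e'(M_{s,\e})|\nabla[M_{s,\e}-\hat M]_+|^2\ge 0$, and with the reaction decomposition I obtain $\tfrac{d}{dt}a_\e\lesssim a_\e+c$ and $\tfrac{d}{dt}b_\e\lesssim b_\e+e$; integrating in time and passing $\e\to 0$ with the convergences of \Cref{lemma:ExistUnRegSol} recovers the integral inequalities for $a$ and $b$.

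Adding the four integral inequalities and invoking Gronwall with zero initial data then gives $a+b+c+e\equiv 0$ on $[0,T]$, which is the claim. I expect the main obstacle to be conceptual rather than computational: checking that the cross-coupling genuinely closes, i.e. that each off-diagonal reaction term is absorbed into one of the \emph{other} three quantities with the correct sign, which is exactly where the placement of the arguments $f_1(\hat M,\check S)$ and $f_1(\check M,\hat S)$ in \eqref{eq:SubSupSol}, together with the monotonicity of $f_0$ in $s$ and $f_1$ in $m$, is essential. The two secondary technical points are the inadmissibility of $[M-\hat M]_+$ for the degenerate equation (circumvented by the regularization and limit passage above) and, for $\nu_1>0$, the convection term (tamed by the boundary assumption that places the positive parts in $H^1_0(\Omega)$).
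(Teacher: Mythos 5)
Your proposal is correct and follows essentially the same route as the paper's proof: the same four positive-part quantities, the same Lipschitz-plus-monotonicity splitting of the reaction terms that produces the closed cyclic coupling $a\to c\to b\to e\to a$, and the same summation-plus-Gronwall conclusion, the paper likewise testing with $[M-\hat M]_+$, $[S_1-\hat S]_+$, $[M-\check M]_-$, $[S_1-\check S]_-$ and adding the resulting estimates. Your explicit detour through the regularized problem of \Cref{lemma:ExistRegSol} to justify testing the degenerate equation with $[M_{s,\e}-\hat M]_+$ and passing $\e\to 0$ is precisely what the paper's phrase ``following the manipulations in \Cref{lemma:MaxRegSol}'' implicitly invokes, so this is a difference of presentation rather than substance.
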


\begin{remark}[Assumptions in \Cref{pros:TwoSided}]
    Observe that the assumptions of \Cref{pros:TwoSided} are satisfied by the reaction terms in \eqref{biofilm_model} and \eqref{eq:fgForms} which were considered, e.g., in \cite{eberl2001new,eberl2017spatially}. Moreover, the assumption $h_1\in [\check{S}(t),\hat{S}(t)]$ a.e. in $\p\Om$ is a consistency condition that can be omitted in the case of immobile substrates ($\nu_1=0$) which occurs in the models for cellulolytic biofilms \cite{eberl2017spatially}, or  when homogeneous Neumann conditions are assumed  for $S$. 
\end{remark}

\begin{proof}
The proof generalizes the arguments in \Cref{lemma:MaxRegSol} and follows the proof of Proposition 1 of \cite{mitra2020existence}. The existence and uniqueness of the solution $(\check{M},\check{S},\hat{M},\hat{S})$ is evident from the Picard-Lindel\"of Theorem. Moreover, $f_0(m,s)$ is increasing with $s$, $f_1(m,s)$ is decreasing with $m$, along with  $\hat{M}(0)=\bar{M}\geq \underline{M}= \check{M}(0)$ and $\hat{S}(0)=\bar{S}\geq \underline{S}= \check{S}(0)$, together imply for all $t>0$, 
\begin{align}\label{eq:OrderingHatCheck}
    \hat{M}(t)\geq \check{M}(t), \quad  \text{ and } \quad \hat{S}(t)\geq \check{S}(t).
\end{align}
This follows by writing  from \eqref{eq:SubSupSol}, 
\begin{align*}
&\frac{1}{2}[\check{M}(t)-\hat{M}(t)]_+^2= \int_0^t [\check{M}-\hat{M}]_+(f_0(\check{M},\check{S})-f_0(\hat{M},\hat{S})),\\
&\frac{1}{2}[\check{S}(t)-\hat{S}(t)]_+^2= \int_0^t [\check{S}-\hat{S}]_+(f_1(\hat{M},\check{S})-f_1(\check{M},\hat{S})),
\end{align*}
for $t>0$. Then, following the manipulations in \Cref{lemma:MaxRegSol} (also repeated below), Gronwall's Lemma yields $[\check{M}(t)-\hat{M}(t)]_+=[\check{S}(t)-\hat{S}(t)]_+=0$. We omit the detailed proof for brevity.


Insert the test functions $\f=[M-\hat{M}]_+$ and $\z_1=[S_1-\hat{S}]_+$ in \eqref{eq:weak}. Observe that, $\z_1\in L^2(0,T;H^1_0(\Om))$  is a valid test function for $\nu_1>0$ since $S_1-\hat{S}=h_1 -\hat{S}\leq 0 $ on $\p\Om$. Then following the manipulations in \Cref{lemma:MaxRegSol}, one obtains from the first equation that
\begin{subequations}\label{eq:TwoSidedBound}
\begin{align}
&\int_0^T \p_t \left (\frac{1}{2} \|[M-\hat{M}]_+\|^2\right )\leq \int_0^T (f_0(M,S_1) -f_0(\hat{M},\hat{S}),[M-\hat{M}]_+)\nonumber\\
&= \int_0^T  (f_0(M,S_1)-f_0(\hat{M},S_1),[M-\hat{M}]_+) + \int_0^T (f_0(\hat{M},S_1)-f_0(\hat{M},\hat{S}),[M-\hat{M}]_+)\nonumber\\
&\overset{\ref{prop:fl}}\leq C_L \int_0^T  \|[M-\hat{M}]_+\|^2 + C_L\int_0^T ([S_1-\hat{S}]_+,[M-\hat{M}]_+)\nonumber\\
&\leq C \int_0^T [\|[M-\hat{M}]_+\|^2 + \|[S_1-\hat{S}]_+\|^2].
\end{align}
Here, we used that $f_0(\hat{M},\cdot)$ is increasing to conclude that $$
(f_0(\hat{M},S_1)-f_0(\hat{M},\hat{S}))[M-\hat{M}]_+\leq C_L [S_1-\hat{S}]_+[M-\hat{M}]_+.
$$
Similarly, from the second equation, noting that $f_1(\cdot,s)$ is decreasing for a given $s$, one obtains
\begin{align}
&\int_0^T \p_t \left (\frac{1}{2} \|[S_1-\hat{S}]_+\|^2\right )\leq \int_0^T (f_1(M,S_1) -f_1(\check{M},\hat{S}),[S_1-\hat{S}]_+)\nonumber\\
&= \int_0^T  (f_1(M,S_1)-f_1(\check{M},S_1),[S_1-\hat{S}]_+) + \int_0^T (f_1(\check{M},S_1)-f_1(\check{M},\hat{S}),[S_1-\hat{S}]_+)\nonumber\\
&\leq C_L\int_0^T ([M-\check{M}]_-,[S_1-\hat{S}]_+)+ C_L \int_0^T  \|[S_1-\hat{S}]_+\|^2 \nonumber\\
&\leq C \int_0^T [\|[M-\check{M}]_-\|^2 + \|[S_1-\hat{S}]_+\|^2].
\end{align}
\end{subequations}
Finally, inserting the test functions $\f=[M-\check{M}]_-$ and $\z_1=[S_1-\check{S}]_-$ in \eqref{eq:weak} we get analogous estimates to \eqref{eq:TwoSidedBound}. Adding these inequalities and using Gronwall's Lemma completes the proof. 
\end{proof}

\begin{remark}[Guaranteed finite time blow-up/ boundedness]
If the solution $\check{M}$ in \Cref{pros:TwoSided} reaches $1$ in finite time, then it implies that the solution of the original system $(M,S_1)$ blows up in finite time. On the other hand, if the solution $\hat{M}$ remains bounded by a constant strictly less than $1$, then $M$ does not blow up and hence, the solution $(M,S_1)$ is global-in-time. The bounds are sharp if $|\bar{M}-\underline{M}|$ and $|\bar{S}-\underline{S}|$ are small.
\end{remark}

\section{Spatial regularity of the biomass density}\label{sec:fronts}

In this section, we analyze the spatial regularity of solutions of the degenerate diffusion equation \eqref{eq:M}, i.e., we focus on the scalar equation 
\begin{align}\label{eq:Mscalar}
\p_t M =\del\cdot[D(M)\del M] + f(M,\cdot)\qquad \text{in } Q,
\end{align}
where $D:[0,1)\to [0,\infty)$ and $f:[0,\infty)\times Q\to \R$.
The regularity results we derive apply to a broad class of degenerate diffusion problems, see \Cref{rem:generality}, including the biofilm growth models \cite{eberl2001new,eberl2017spatially}.

It is well known that the degeneracy of the diffusion coefficient $D(0)=0$ causes a finite speed of propagation and sharp fronts at the interface between the regions $\{M>0\}$ and $\{M=0\}$ corresponding to steep gradients of $M$.  Despite this fact, the solution $M$ is  locally H\"older continuous. This was shown for porous medium type equations in \cite{friedman1985holder} and for equations with degenerate and singular diffusion in \cite{victor2022}. The global space-time regularity of solutions of the porous medium equation in $\R^d$ has also been studied extensively using optimal regularity theory, see \cite{guess2020optimal} and the references therein. 
Assuming homogeneous Neumann boundary conditions, in this section we show that $M$ can further inherit global spatial regularity in the more general case \eqref{eq:Mscalar}, i.e. $M\in L^2(0,T;H^r(\Om))$, where $r=1$ for $a<2$, and $r<1$ otherwise. This fact is not only mathematically intriguing but has important consequences in designing numerical tools and test functions for such problems. We now specify the assumptions on the functions $D$ and $f$.

\begin{assumption}[Assumptions on $D$ and $f$]\label{ass:D}
The diffusion coefficient satisfies \ref{prop:D}. In addition, there exists $a\in \R^+$ and a constant $C>0$ such that $$D(m)\geq C m^a$$ for all $m\in [0,1)$. The function $f:[0,\infty)\times Q\to \R$ is Lipschitz continuous with respect to the first variable, and there exists a non-negative function $f_{\max}\in \mathrm{Lip}(\R)$ such that $f(\cdot,(\bm{x},t)) \leq f_{\max}(\cdot)$. Moreover, we assume that $f(0,(\bm{x},t))\geq 0$ for all $(\bm{x},t)\in Q$.
\end{assumption}


\begin{theorem}[Global spatial regularity of $M$] 
Let $\Gamma_1=\emptyset$ (homogeneous Neumann condition). Let $M\in \W$  with $\Phi(M)=\int_0^M D\in L^2(0,T;H^1(\Om))$, and $M(0)=M_0$ (see \ref{prop:IC}) be the weak solution of \eqref{eq:Mscalar}, i.e.,
\begin{align}\label{eq:scalar_diff}
    \int_0^T \langle \f,\p_t M \rangle + \int_0^T (\del \Phi(M),\del \f)= \int_0^T (f(M,\cdot),\f),
\end{align} 
for all $\f\in L^2(0,T;H^1(\Om))$. Then under the Assumption \ref{ass:D}, $M\in L^2(0,T;H^r(\Om))$ for 
\begin{enumerate}[label=(\alph*)]
\item $r=1$ if either $a<2$ or $\underline{M}=\mathrm{ess}\inf\{M_0\}>0$.
\item all $r<2/a$, if $a\geq 2$ and $\underline{M}=0$.
\end{enumerate}\label{theo:fronts}
\end{theorem}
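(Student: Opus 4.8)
The plan is to reduce everything to the non-degenerate regularized problem of \Cref{sec:RegularizedProblem}, derive an $\e$-uniform gradient estimate for a suitable power of $M_{s,\e}$, and then pass to the limit $\e\to0$ using the convergences already established in the proof of \Cref{lemma:ExistUnRegSol}. Throughout I would work with $\Phi_\e$ as in \eqref{eq:PhiepsDef}, for which $M_\e:=M_{s,\e}$ is a genuine $L^2(0,T;H^1(\Om))$-solution, so that the manipulations below are justified before taking limits.

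First, the cases where no genuine degeneracy is seen are immediate. If $\underline M>0$, then by the minimum principle (as in \Cref{lemma:MaxRegSol}, using $f(0,\cdot)\ge0$) the solution stays bounded below by a positive constant $c$ on $[0,T]$; hence $D(M)\ge D(c)>0$, and the pointwise identity $\del M=\del\Phi(M)/D(M)$ together with $\Phi(M)\in L^2(0,T;H^1(\Om))$ gives $M\in L^2(0,T;H^1(\Om))$ directly, which is case (a). Likewise, on any region $\{M>\d\}$ the same identity controls $\del M$ by $\del\Phi(M)$, so the entire difficulty is concentrated in the degenerate zone $\{0<M\le\d\}$ near the free boundary.

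The core ingredient is a weighted energy estimate. I would test the regularized equation \eqref{eq:reg} with $\psi(M_\e)$, where $\psi$ is the power-type primitive $\psi'(m)=m^{2q-2}/\Phi_\e'(m)$ (admissible as an $H^1$-test function precisely when $\psi$ stays bounded). The diffusion term then produces $\int_Q \Phi_\e'(M_\e)\psi'(M_\e)|\del M_\e|^2=\tfrac1{q^2}\int_Q|\del(M_\e^{q})|^2$; the accretion term is handled by convexity exactly as in Step~2 of \Cref{lemma:MaxRegSol} and \Cref{lemma:UnifromBoundUn}; and the reaction term is controlled using the $L^\infty$-bound $0\le M_\e\le1-\d$, the bound $f\le f_{\max}$, and the growth $D(m)\ge Cm^a$ from \Cref{ass:D}. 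Letting $\e\to0$ and using the strong $L^2(Q)$ convergence $M_\e\to M$ transfers the estimate to the limit, giving $M^{q}\in L^2(0,T;H^1(\Om))$ for the admissible powers. It then remains to convert this weighted gradient control into Sobolev regularity of $M$ itself, which is where the sharp exponents enter.

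The \textbf{main obstacle} is that the bare energy estimate is not by itself sharp. Admissibility of the test function forces $m^{2q-2}/D(m)\sim m^{2q-2-a}$ to be integrable at $m=0$, i.e. $q>(a+1)/2$, and a crude composition from $M^{q}\in H^1$ would only yield $r<2/(a+1)$. Reaching the stated $r=1$ (for $a<2$) and $r<2/a$ (for $a\ge2$) requires genuinely exploiting the parabolic structure and the behaviour of $M$ at the free boundary: there one expects $M\sim\mathrm{dist}^{1/a}$, so that $|\del M|^2\sim\mathrm{dist}^{2/a-2}$ is integrable exactly when $a<2$, which is the origin of the dichotomy and of the exponent $2/a$. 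Capturing this sharp behaviour — rather than the weaker index the energy estimate alone provides — by combining the weighted bound with the $L^\infty$-bound and the local H\"older continuity of $M$ from \cite{victor2022} is the delicate heart of the proof.
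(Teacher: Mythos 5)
Your overall strategy --- regularize via \eqref{eq:PhiepsDef}, test with a negative-power-type function of $M_\e$, derive an $\e$-uniform weighted gradient bound, pass to the limit using the convergences of \Cref{lemma:ExistUnRegSol}, and convert $M^q\in L^2(0,T;H^1(\Om))$ into $H^r$-regularity (this is \Cref{lemma:Hr}) --- is exactly the skeleton of the paper's proof, and your treatment of the sub-case $\underline{M}>0$ via the Lipschitz chain rule $\del M=\del\Phi(M)/D(M)$ is a legitimate, even more elementary, alternative (modulo the fact that your ``minimum principle'' needs the comparison function $\underline{M}e^{-C_L t}$, since $f(0,\cdot)\geq 0$ alone only gives $M\geq 0$). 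But there is a genuine gap, which you have in fact identified yourself: your admissibility requirement that $\psi$ itself stay bounded forces $q>(a+1)/2$, hence only $r<2/(a+1)$; in particular for $1\leq a<2$ you cannot even reach $r=1$. Your closing paragraph, which proposes to upgrade this to the stated exponents by ``exploiting the parabolic structure,'' the heuristic $M\sim\mathrm{dist}^{1/a}$ at the free boundary, and the H\"older continuity of \cite{victor2022}, is not an argument: the paper never uses interior H\"older regularity, no mechanism is given by which it would improve $2/(a+1)$ to $2/a$, and the free-boundary asymptotics are precisely what one cannot assume at this level of generality.

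The missing idea is \emph{where the exponent constraint actually lives}. The paper's test function \eqref{eq:DefPsi} is the capped primitive of $m^{-\a}$, i.e. $\Psi_\e'(m)=1/\min\{\max\{\e,m^\a\},1\}$; it does \emph{not} carry $\Phi_\e'$ in the denominator, it is Lipschitz for every fixed $\e$ (so admissibility is free), and no $\e$-uniform bound on $\Psi_\e$ itself is ever needed --- indeed $\Psi_\e(0^+)\to-\infty$ as $\e\to 0$ once $\a\geq 1$. The growth assumption $D\geq Cm^a$ enters only through the product $\Phi_\e'\Psi_\e'\gtrsim\min\{m^{a-\a},1\}$, see \eqref{eq:PhiPsi}, which puts the weight $m^{a-\a}$ in the diffusion term. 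The $\e$-uniformity constraint then falls on the \emph{second} primitive: the time-derivative term produces $\int_\Om\int_1^{M_0}\Psi_\e$, and $\int_0^1\int_s^1\vr^{-\a}\,\dd\vr\,\dd s<\infty$ if and only if $\a<2$, \emph{independently of} $a$; the reaction term is absorbed by Gronwall thanks to the growth estimate \eqref{eq:PsiIneq}, which again involves only the second primitive. This is why $\a$ may be pushed up to $2$ (or taken $=a$ when $a<2$, or arbitrary when $\underline{M}>0$), yielding \eqref{eq:Mineq}, i.e. $M_\e^{(a+\d)/2}$ uniformly bounded in $L^2(0,T;H^1(\Om))$ when $a\geq 2$, and $\del M_\e$ itself bounded in $L^2(Q)$ when $a<2$. \Cref{lemma:Hr} then gives exactly $r=1$ for $a<2$ and all $r<2/a$ for $a\geq 2$, with no free-boundary analysis and no regularity theory beyond the energy method. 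In short: replace ``$\psi$ bounded'' by ``the primitive of $\psi$ bounded at $M_0$,'' and regularize the singular test function by capping rather than by dividing by $\Phi_\e'$; without this shift your method provably stalls at the non-sharp exponent $2/(a+1)$.
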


\begin{remark}[Generality of Theorem \ref{theo:fronts}] \label{rem:generality}
Theorem \ref{theo:fronts} applies to the solution $M$ of the coupled system \eqref{eq:main} under the conditions \ref{prop:D}--\ref{prop:BC} and \ref{prop:fg2}. Since the spatial irregularity of $M$ stems from the degeneracy at $M=0$, our regularity results also cover diffusion coefficients $D$ that are degenerate but non-singular, for instance, porous medium type equations. In this case, the additional assumption that $f$ is bounded by $f_{\max}$ can be omitted as solutions are not required to take values in $[0,1)$.
\end{remark}

\begin{remark}[Assumptions on the boundary conditions in Theorem \ref{theo:fronts}]
To simplify notations \Cref{theo:fronts} is stated for homogeneous Neumann boundary conditions. However, the result remains valid for Dirichlet or mixed boundary conditions provided that $\Phi(M)=\Phi(h^e_0)$ at $\Gamma_1$ and the functions $\Psi_\e(h^e_0)\in H^1(\Om)$ are uniformly bounded with respect to $\e\in (0,1)$, where $\Psi_\e$ is introduced in \eqref{eq:DefPsi} and $h^e_0$ in \ref{prop:BC}.
\end{remark}

The rest of this section is dedicated to the proof of  \Cref{theo:fronts}. The main idea behind the proof is to use a test function $\f$ of the form $M^{-\a}$ ($\a>0$) in \eqref{eq:scalar_diff}. However, $\varphi$ might not be a valid test function due to $M$ not being sufficiently regular, and $M^{-\a}$ having a singularity at $0$. To resolve this, we will construct a modified function that is admissible.

\subsection{Some auxiliary functions}

As in the proof of \Cref{theo:fronts} we consider the regularized problem introduced in \Cref{lemma:ExistRegSol}. The function $\Phi_\e$ is taken as in \eqref{eq:PhiepsDef}. For a given constant $\a>0$, we further introduce the $C^1(\R)$ function 
\begin{align}\label{eq:DefPsi}
\Psi_\e(m):= \int_{1}^m \frac{\dd \vr }{\min\{\max\{\e,\vr^\a\},1\}}.
\end{align}
Note that
\begin{align}\label{eq:propPsi}
\Psi'_{\e}\ge 0 \quad \text{ and }\quad  \Psi_{\e}(m)< 0 \quad \text{ for } m<1.
\end{align}

\begin{lemma}[Growth of $\Psi_\e$] 
For a given $\a> 0$ and $\e\in (0,1)$, let $\Psi_\e$ be defined as in \eqref{eq:DefPsi}. Then, the following estimate holds,
\begin{align}
|m\Psi_\e(m)|\lesssim 1 + \int_{1}^m \Psi_\e\qquad  \text{ for all } m\geq 0.\label{eq:PsiIneq}
\end{align}
\end{lemma}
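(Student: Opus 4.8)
The plan is to reduce the estimate to a pointwise comparison between the integrand $\vr/g_\e(\vr)$ and $\Psi_\e$, where I abbreviate $g_\e(\vr):=\min\{\max\{\e,\vr^\a\},1\}$ for the denominator in \eqref{eq:DefPsi}, so that $\Psi_\e'=1/g_\e\ge 0$ and $\Psi_\e(1)=0$. First I would integrate by parts in the primitive $\int_1^m\Psi_\e$: since $\Psi_\e\in C^1(\R)$, one obtains the exact identity
\[
m\,\Psi_\e(m)=\int_1^m\Psi_\e(\vr)\,\dd\vr+\int_1^m\frac{\vr}{g_\e(\vr)}\,\dd\vr,
\]
valid for every $m\ge 0$ (with the usual orientation convention $\int_1^m=-\int_m^1$ when $m<1$). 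This isolates the quantity to be controlled: it remains to bound the second integral $\int_1^m \vr/g_\e\,\dd\vr$ by $1+\int_1^m\Psi_\e$.

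The main step is the pointwise bound $\vr/g_\e(\vr)\le C_\a\bigl(1+|\Psi_\e(\vr)|\bigr)$ for all $\vr\ge 0$, with $C_\a$ depending only on $\a$ and \emph{not} on $\e$, which I would establish on the three regions determined by $g_\e$. For $\vr\ge 1$ one has $g_\e\equiv 1$ and $\Psi_\e(\vr)=\vr-1\ge 0$, so $\vr/g_\e=1+|\Psi_\e(\vr)|$. On $[\e^{1/\a},1]$ one has $g_\e(\vr)=\vr^\a$, and a direct computation gives $\Psi_\e(\vr)=\tfrac{1-\vr^{1-\a}}{\a-1}<0$, whence the algebraic identity $\vr/g_\e=\vr^{1-\a}=1+(\a-1)|\Psi_\e(\vr)|$ (the case $\a=1$ being immediate since then $\vr/g_\e\equiv 1$). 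The delicate region is the flat one $[0,\e^{1/\a}]$, where $g_\e\equiv\e$ and $\vr/g_\e=\vr/\e\le\e^{1/\a-1}$, which for $\a>2$ diverges as $\e\to 0$. Here I would exploit that $|\Psi_\e|$ is increasing as $\vr\searrow 0$ together with the value $\e^{1/\a-1}=1+(\a-1)|\Psi_\e(\e^{1/\a})|$ to absorb the divergence, obtaining $\vr/g_\e\le\e^{1/\a-1}\le 1+(\a-1)|\Psi_\e(\vr)|$; for $\a\le 1$ the bound is trivial because $\e^{1/\a-1}\le 1$. Taking $C_\a=\max\{1,\a\}$ then covers all regions.

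Finally I would assemble the pieces. Since $\Psi_\e$ has constant sign on the interval $I_m$ between $1$ and $m$, one checks $\int_1^m\Psi_\e\ge 0$ and $\int_{I_m}|\Psi_\e|=\int_1^m\Psi_\e$. Inserting the pointwise bound into the identity, with $|I_m|=|m-1|$, gives
\[
|m\,\Psi_\e(m)|\le \int_1^m\Psi_\e+C_\a\Bigl(|m-1|+\int_1^m\Psi_\e\Bigr),
\]
and the elementary inequality $|m-1|\le 1+\int_1^m\Psi_\e$ (trivial for $m\le 1$, and for $m>1$ a consequence of $\int_1^m\Psi_\e=(m-1)^2/2$ together with $t\le 1+t^2/2$) closes the estimate with an $\e$-independent constant. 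The main obstacle throughout is guaranteeing this $\e$-uniformity in the flat region $[0,\e^{1/\a}]$ when $\a>2$: both $\vr/g_\e$ and $|\Psi_\e|$ blow up like $\e^{1/\a-1}$ there, so the argument must use their exact relationship rather than crude size bounds.
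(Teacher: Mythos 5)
Your proof is correct, and it takes a genuinely different route from the paper's. Writing, as you do, $g_\e(\vr):=\min\{\max\{\e,\vr^\a\},1\}$ for the denominator in \eqref{eq:DefPsi}, the paper proves \eqref{eq:PsiIneq} by fixing $m$ in one of the same three ranges ($m>1$, $\e^{1/\a}\le m\le 1$, $0\le m<\e^{1/\a}$) and comparing the two sides of the inequality directly: it computes $|m\Psi_\e(m)|$ explicitly in each range and then bounds the iterated integral $1+\int_1^m\Psi_\e=1+\int_1^m\int_1^s g_\e(\vr)^{-1}\,\dd\vr\,\dd s$ from below, handling the flat region $[0,\e^{1/\a}]$ by splitting at $\vr=\e^{1/\a}$ and invoking the previous case --- the same boundary-value-plus-monotonicity idea you use. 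Your decomposition is different and arguably cleaner: the identity $m\Psi_\e(m)=\int_1^m\Psi_\e+\int_1^m \vr/g_\e(\vr)\,\dd\vr$ (valid for all $m\ge0$ since $\Psi_\e\in C^1(\R)$ and $\Psi_\e(1)=0$) isolates once and for all the quantity to be controlled and reduces the lemma to the pointwise bound $\vr/g_\e(\vr)\le C_\a\bigl(1+|\Psi_\e(\vr)|\bigr)$, which you verify with exact algebraic identities in two of the three regions and an $\e$-uniform constant; the final assembly, including $|m-1|\le 1+\int_1^m\Psi_\e$ via $\int_1^m\Psi_\e=(m-1)^2/2$ for $m>1$, is sound. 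What your approach buys is a single unified argument for all $m$, explicit constants, and avoidance of the paper's case-by-case lower bounds on the double integral, which are the least transparent part of the published proof; the paper's approach, in turn, is more elementary in that it needs no preliminary identity, only direct computation. One small imprecision: $\e^{1/\a-1}$ diverges as $\e\to0$ for every $\a>1$, not only for $\a>2$; fortunately your absorption argument via $\e^{1/\a-1}=1+(\a-1)\,|\Psi_\e(\e^{1/\a})|$ and the monotonicity of $|\Psi_\e|$ on $[0,1]$ applies verbatim to all $\a>1$, so nothing is lost --- just state the threshold as $\a>1$.
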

\begin{proof}
\textbf{Case 1 ($1<m$):}  For $m>1$, $\Psi_\e(m)=m-1$, and the inequality can be verified directly. 

\textbf{Case 2 ($\e^{\frac{1}{\a}}\leq m\leq 1$):} If $\e^{\frac{1}{\a}}\leq m\leq 1$ and $\a\not=1$, we have
\begin{subequations}\label{eq:Case1Am}
\begin{align}
|m \,\Psi_{\e}(m)|\leq \left|m \int^{m}_1 \frac{1}{\vr^\a}\right| d\rho|= 
\left |m\left (\frac{m^{1-\a}-1}{1-\a}\right ) \right | \lesssim |m^{2-\a}-m|.
\end{align}
Observe that, if $\a\leq  2$ then the right hand side is bounded since $m\leq 1,$ and \eqref{eq:PsiIneq} definitely holds. The case when $\a=1$ can also be handled rather easily since it yields $\Psi_\e(m)=\log(m)$. The  interesting case is when $\a>2$. Then, we can estimate  the right-hand side of \eqref{eq:PsiIneq} as follows,
\begin{align}
1+ \int_{1}^m \Psi_{\e}&= 1+ \int_{1}^{m}\int_{1}^s \frac{1}{\vr^\a}d\rho d s = 1+  \int_{1}^{m} \frac{s^{1-\a}-1}{\a-1}\,\dd s\gtrsim \int_1^m s^{1-\a}  \gtrsim  m^{2-\a} -1. 
\end{align}
\end{subequations}
Combining \eqref{eq:Case1Am} and noting that $m<1$, we have \eqref{eq:PsiIneq} for this case.

\textbf{Case 3 ($0\leq m<\e^{\frac{1}{\a}}$):}
We only focus on $\a>2$ since the case $\a\leq 2$ can be shown exactly as in Case 2. We observe that 
\begin{subequations}\label{eq:Case2Am}
\begin{align}
|m \,\Psi_{\e}(m)|=& \left |m \Psi_{\e}(\e^{\frac{1}{\a}}) + m\smallint^{m}_{\e^{\frac{1}{\a}}} \tfrac{\dd \vr}{\max(\e,\vr^\a)}  \right |=\left |m \Psi_{\e}(\e^{\frac{1}{\a}}) + m\smallint^{m}_{\e^{\frac{1}{\a}}} \tfrac{1}{\e}  \right |\nonumber\\
\leq & |m \Psi_{\e}(\e^{\frac{1}{\a}})| + \frac{m(\e^{\frac{1}{\a}}-m)}{\e}
\leq |\e^{\frac{1}{\a}} \Psi_{\e}(\e^{\frac{1}{\a}})| + \e^{\frac{1}{\a}-1}(\e^{\frac{1}{\a}}-m).
\end{align}
From Case 2 we conclude that $|\e^{\frac{1}{\a}} \Psi_{\e}(\e^{\frac{1}{\a}})|\lesssim 1+ \int^{\e^{\frac{1}{\a}}}_{1} \Psi_{\e}$, and we obtain
\begin{align}
1+ \int_{1}^m \Psi_{\e}&= 1+ \int_{1}^{\e^{\frac{1}{\a}}} \Psi_{\e} + \int^{m}_{\e^{\frac{1}{\a}}} \int_{1}\tfrac{\dd \vr}{\max(\e,\vr^\a)}= 1+ \int_{1}^{\e^{\frac{1}{\a}}} \Psi_{\e} + \int^{\e^{\frac{1}{\a}}}_{m} \int^{1}\tfrac{\dd \vr}{\max(\e,\vr^\a)}\nonumber\\
& \geq 1 + \int_{1}^{\e^{\frac{1}{\a}}} \Psi_{\e} + \int_{m}^{\e^{\frac{1}{\a}}} \int_{\e^{\frac{1}{\a}}}^{1}\tfrac{\dd \vr}{\max(\e,\vr^\a)}\gtrsim |\e^{\frac{1}{\a}} \Psi_{\e}(\e^{\frac{1}{\a}})| + (\e^{\frac{1}{\a}}-m)\int_{\e^{\frac{1}{\a}}}^{1}\tfrac{\dd \vr}{\vr^\a}\nonumber\\
&\gtrsim |\e^{\frac{1}{\a}} \Psi_{\e}(\e^{\frac{1}{\a}})| + \e^{\frac{1}{\a}-1}(\e^{\frac{1}{\a}}-m)- (\e^{\frac{1}{\a}}-m).
  \end{align}  
Hence, combining again \eqref{eq:Case2Am} we have \eqref{eq:PsiIneq}.
\end{subequations}
\end{proof}

\subsection{Boundedness of $M$ in $L^2(0,T;H^r(\Om))$}
To prove \Cref{theo:fronts} we first show the following lemma. 

\begin{lemma}[An estimate for the regularized solutions]\label{lemma:UniBoundW} 
Let Assumption \ref{ass:D} hold and $\Gamma_1=\emptyset$. For $\e\in (0,1)$ and $\a>0$, let $\Phi_\e$ and $\Psi_\e$ be defined by \eqref{eq:PhiepsDef}  and \eqref{eq:DefPsi} respectively. Let $M_{\e}\in \X$ satisfy $M_{\e}(0)=M_0$ and \begin{align*}
        \int_0^T \langle \f,\p_t M_\e \rangle + \int_0^T (\del \Phi_\e(M_\e),\del \f)= \int_0^T (f(M_\e,\cdot),\f),
\end{align*}
for all $\f\in L^2(0,T;H^1(\Om))$. Then, we have 
\begin{align}
\int_0^T \int_{\Om} \min\{M_\e^{a-\a},1\}|\del M_\e|^2\lesssim 1 + \int_\Om \int_{1}^{M_0} \Psi_{\e}.\label{eq:Mineq}
\end{align}
\end{lemma}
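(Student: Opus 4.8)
The plan is to test the regularized weak formulation with $\f=\Psi_\e(M_\e)$. This is admissible: from \eqref{eq:DefPsi} one has $\Psi_\e'(m)=1/\min\{\max\{\e,m^\a\},1\}\in[1,\e^{-1}]$, so $\Psi_\e$ is Lipschitz and $\Psi_\e(M_\e)\in L^2(0,T;\H)$ with $\del\Psi_\e(M_\e)=\Psi_\e'(M_\e)\del M_\e$ (here $\H=H^1(\Om)$ since $\Gamma_1=\emptyset$). I would introduce the primitive $\mathcal{P}_\e(m):=\int_1^m\Psi_\e$, which is convex (as $\mathcal{P}_\e''=\Psi_\e'\ge 0$) and non-negative with $\mathcal{P}_\e(1)=0$. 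Testing over $[0,t]$ and applying the standard chain rule for Bochner functions to the time term, the resulting energy identity reads
\[
E(t)+\int_0^t\int_\Om \Phi_\e'(M_\e)\,\Psi_\e'(M_\e)\,|\del M_\e|^2=E(0)+\int_0^t\int_\Om f(M_\e,\cdot)\,\Psi_\e(M_\e),
\]
where $E(t):=\int_\Om\mathcal{P}_\e(M_\e(t))\ge 0$ and $E(0)=\int_\Om\int_1^{M_0}\Psi_\e$.

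The central step is a pointwise lower bound on the diffusion integrand. By \Cref{lemma:MaxRegSol} we have $0\le M_\e\le 1-\bar{\d}<1$, so $D(M_\e)$ is bounded and, for $\e$ small, the ceiling in $\Phi_\e'$ is inactive, giving $\Phi_\e'(M_\e)=\max\{\e,D(M_\e)\}\ge\max\{\e,C M_\e^a\}$ by the growth condition $D(m)\ge C m^a$ of \Cref{ass:D}; likewise $M_\e^\a\le 1$ yields $\Psi_\e'(M_\e)=1/\max\{\e,M_\e^\a\}$. I then claim
\[
\frac{\max\{\e,Cm^a\}}{\max\{\e,m^\a\}}\gtrsim \min\{m^{a-\a},1\}
\]
uniformly in $\e$, which I would verify by a short case distinction on the relative sizes of $\e$, $Cm^a$ and $m^\a$, treating $a\ge\a$ and $a<\a$ separately. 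This gives $\int_0^t\int_\Om\min\{M_\e^{a-\a},1\}|\del M_\e|^2\lesssim E(0)-E(t)+\int_0^t\int_\Om f\,\Psi_\e(M_\e)$.

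For the reaction term I would split $f(M_\e,\cdot)=\bigl(f(M_\e,\cdot)-f(0,\cdot)\bigr)+f(0,\cdot)$. Since $f(0,\cdot)\ge 0$ and $\Psi_\e(M_\e)\le 0$ for $M_\e<1$ by \eqref{eq:propPsi}, the second contribution is non-positive and is simply discarded. For the first, Lipschitz continuity gives $|f(M_\e,\cdot)-f(0,\cdot)|\le L\,M_\e$, so by the growth estimate \eqref{eq:PsiIneq},
\[
\Bigl|\bigl(f(M_\e,\cdot)-f(0,\cdot)\bigr)\Psi_\e(M_\e)\Bigr|\le L\,|M_\e\Psi_\e(M_\e)|\lesssim 1+\mathcal{P}_\e(M_\e).
\]
Integrating, the reaction term is bounded by $C|Q|+C\int_0^t E(s)\,\dd s$. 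Feeding this into the energy identity and discarding the non-negative diffusion term yields $E(t)\le E(0)+C|Q|+C\int_0^t E$, whence Gronwall's Lemma \eqref{eq:Gronwall} gives $\sup_{[0,T]}E\lesssim 1+E(0)$. Reinserting this bound controls $\int_0^T\int_\Om f\,\Psi_\e(M_\e)$ by $1+E(0)$, and \eqref{eq:Mineq} follows.

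The main obstacle is the uniform-in-$\e$ pointwise estimate $\Phi_\e'\Psi_\e'\gtrsim\min\{m^{a-\a},1\}$: one must handle the floors at $\e$ in both $\Phi_\e'$ and $\Psi_\e'$ and ensure the $\e^{-1}$ ceiling in $\Phi_\e'$ is inactive, which is precisely where the a priori bound $M_\e\le 1-\bar{\d}$ from \Cref{lemma:MaxRegSol} is needed.
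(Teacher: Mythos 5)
Your overall strategy coincides with the paper's: test with $\Psi_\e(M_\e)$, prove a pointwise lower bound $\Phi_\e'(m)\Psi_\e'(m)\gtrsim\min\{m^{a-\a},1\}$, split the reaction term using $f(0,\cdot)\geq 0$ together with the growth estimate \eqref{eq:PsiIneq}, and close with Gronwall. However, there is a genuine gap: you invoke \Cref{lemma:MaxRegSol} to claim $0\le M_\e\le 1-\bar{\d}$, but part (b) of that lemma explicitly requires $\G_1$ to have non-zero measure, whereas the present lemma assumes $\G_1=\emptyset$ (homogeneous Neumann). In this setting only part (a) is available, i.e. $M_\e(t)\le\hat{M}(t)$, and $\hat{M}$ may reach and exceed $1$ in finite time --- this is precisely the blow-up phenomenon discussed in \Cref{sec:blowup}. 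Since the statement of the lemma makes no assumption of the form $T<T^*$, you may not assume $M_\e<1$ a.e. in $Q$.

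This unjustified bound enters your argument twice, with different severity. First, you use it to deactivate the ceiling $\e^{-1}$ in $\Phi_\e'$ (and to simplify $\Psi_\e'$); this part is harmless because the ceiling case is trivial anyway: $\Psi_\e'\geq 1$ always, so $\Phi_\e'\Psi_\e'\geq\e^{-1}\geq 1\geq\min\{m^{a-\a},1\}$ whenever $\Phi_\e'=\e^{-1}$, which is exactly how the paper disposes of its Case 3 with no boundedness needed; the pointwise bound \eqref{eq:PhiPsi} holds for \emph{all} $m\geq 0$. Second --- and this is the real defect --- you discard the term $\int f(0,\cdot)\,\Psi_\e(M_\e)$ as non-positive ``since $\Psi_\e(M_\e)\le 0$ for $M_\e<1$''. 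On the set $\{M_\e>1\}$, which cannot be excluded here, this term is positive and cannot be dropped. The paper's fix is to retain $[\Psi_\e(M_\e)]_+$, observe that it is supported on $\{M_\e>1\}$ where $[\Psi_\e(M_\e)]_+\le|M_\e\Psi_\e(M_\e)|$, and bound $f(0,\cdot)[\Psi_\e(M_\e)]_+\le f_{\max}(0)\,|M_\e\Psi_\e(M_\e)|\lesssim 1+\int_1^{M_\e}\Psi_\e$ via \eqref{eq:PsiIneq}, which then feeds into the same Gronwall step you perform. With this replacement (and noting that $M_\e\geq 0$, which does hold for all boundary conditions by the argument in Step 1 of \Cref{lemma:MaxRegSol}), your proof becomes complete and matches the paper's.
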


\begin{proof} First, we show that the following estimate holds,
\begin{align}\label{eq:PhiPsi}
\Phi'_\e(m)\, \Psi'_\e(m)\gtrsim \min\{1, m^{a-\a}\}\qquad  \text{ for all } m\geq 0.
\end{align}
We distinguish several cases. 
\textbf{Case 1: ${\Phi_\e}'(m)=D(m)$.} 
This implies that $\varepsilon\leq {\Phi_\e}'(m)=D(m)\leq \frac{1}{\varepsilon}$ and hence,  $m<1$. 
If  ${\Psi_\e}'(m)= m^{-\a}$, then the result follows from Assumption \ref{ass:D}. 
If ${\Psi_\e}'(m)= \frac{1}{\varepsilon}$, then $\Phi'_\e(m)\, \Psi'_\e(m)=D(m)/\e>1$.  
Finally, if ${\Psi_\e}'(m)= 1$, then $m\geq 1$ which is excluded. 

\textbf{Case 2: ${\Phi_\e}'(m)=\e$.} Consequently, $m<1$. The definition of $\Phi_\e$ in \eqref{eq:PhiepsDef} implies that $\e
\geq D(m)\gtrsim m^a$, where the last inequality holds by Assumption \ref{ass:D}. Hence, for $\Psi'_\e=\e^{-1}$ the product $\Phi'_\e\, \Psi'_\e=1$ and for $\Psi'_\e=m^{-\a}$ we have  $\Phi'_\e\, \Psi'_\e\gtrsim m^{a-\a}$. 

\textbf{Case 3: ${\Phi_\e}'=\frac{1}{\varepsilon}$.} This case  follows similarly. 

Inserting the test function $\f=\Psi_{\e}(M_{\e})$ in \eqref{eq:reg}, the first term becomes
\begin{subequations}\label{eq:TheL2H1part}
\begin{align}
\int_0^T \langle \p_t M_{\e}, \Psi_{\e}(M_{\e})\rangle= \int_\Om \int_{1}^{M_{\e}(T)}  \Psi_{\e} - \int_\Om \int_{1}^{M_0} \Psi_{\e}.
\end{align}
 The second term of \eqref{eq:reg} gives
\begin{align}
\int_0^T (\del \Phi_\e(M_\e),\del \Psi_{\e}(M_{\e}))&=
\int_0^T \int_{\Om} \Phi'_\e(M_\e)\,\Psi'_\e(M_\e) |\del M_{\e}|^2\nonumber\\
&\overset{\eqref{eq:PhiPsi}}\gtrsim \int_0^T  \int_{\Om} \min\{M_\e^{a-\a},1\}|\del M_\e|^2.
\end{align}
Finally, the third term of \eqref{eq:reg} yields using $f(0,\cdot)\geq 0$ and \eqref{eq:propPsi} that  
\begin{align}
&\int_0^T ( f(M_{\e},\cdot), \Psi_{\e}(M_{\e}))=\int_0^T ( f(M_{\e},\cdot)-f(0,\cdot), \Psi_{\e}(M_{\e}))+
\int_0^T (f(0,\cdot), \Psi_{\e}(M_{\e}))\nonumber\\
&\quad  \overset{\ref{prop:fl}} \leq C_L \int_0^T \int_\Om | M_{\e} \Psi_{\e}(M_{\e})| \overset{f(0,\cdot)\geq 0}+
\int_0^T (f(0,\cdot), [\Psi_{\e}(M_{\e})]_+)\nonumber\\
&\quad \lesssim  \int_0^T \int_\Om | M_{\e} \Psi_{\e}(M_{\e})| \overset{\eqref{eq:PsiIneq}} \lesssim \int_0^T \int_\Om [1+ \int_{1}^{M_{\e}}  \Psi_{\e}].
\end{align}
\end{subequations}
In the above, noting that $\Psi_{\e}(m)> 0$ only when $m>1$, we estimated $f(0,\cdot) [\Psi_{\e}(M_{\e})]_+\leq f_{\max}(0) |M_\e \Psi_{\e}(M_{\e}) |$. Combining the inequalities \eqref{eq:TheL2H1part} we have
\begin{align}
\int_\Om \int_{1}^{M_{\e}(T)}  \Psi_{\e} +\int_0^T  \int_{\Om} \min\{M_\e^{a-\a},1\}|\del M_\e|^2 \lesssim 1 +\int_{\Om}\int_{1}^{M_0} \Psi_{\e}+  \int_0^T \int_\Om \int_{1}^{M_\e} \Psi_{\e}.
\end{align}
Using Gronwall's Lemma \eqref{eq:Gronwall} the estimate \eqref{eq:Mineq} follows. 
\end{proof}

To conclude the proof of \Cref{theo:fronts} from \eqref{eq:Mineq}, we need the following lemma.
For its proof we refer to Lemma 1.3 and Lemma B.1 of \cite{sonner2012systems}.

\begin{lemma}[Property of $H^r(\Om)$]\label{lemma:Hr}
If $u^\g\in H^1(\Om)$ for some $\g>1$ then $u\in H^r(\Om)$ for all $r\in (0,\g^{-1}]$.
\end{lemma}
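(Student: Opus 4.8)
The plan is to write $v:=u^{\gamma}\in H^1(\Om)$ and $u=v^{1/\gamma}$ (with $v\ge 0$), and to estimate $u$ directly through the Gagliardo--Slobodeckij seminorm, since membership in $H^r(\Om)=W^{r,2}(\Om)$ amounts to controlling $\|u\|_{L^2(\Om)}$ together with $[u]_r^2:=\int_{\Om}\int_{\Om}|u(\bm x)-u(\bm y)|^2/|\bm x-\bm y|^{d+2r}\,\dd\bm x\,\dd\bm y$. The $L^2$-part is immediate: $u=v^{1/\gamma}\le 1+v$ pointwise (since $v\ge 0$ and $1/\gamma<1$), so $u\in L^2(\Om)$. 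Because $H^{r'}(\Om)\hookrightarrow H^{r}(\Om)$ for $r\le r'$ on a bounded Lipschitz domain, it suffices to push the smoothness index as high as possible; I will first treat the open range $r<1/\gamma$, where the argument is clean, and then isolate the endpoint $r=1/\gamma$, which is the crux.

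The key pointwise input is the Hölder continuity of the concave power $t\mapsto t^{1/\gamma}$ on $[0,\infty)$: for all $a,b\ge 0$ one has $|a^{1/\gamma}-b^{1/\gamma}|\le |a-b|^{1/\gamma}$ (immediate from subadditivity of $t^{1/\gamma}$). Applying this with $a=v(\bm x)$, $b=v(\bm y)$ gives $|u(\bm x)-u(\bm y)|^2\le |v(\bm x)-v(\bm y)|^{2/\gamma}$, hence $[u]_r^2\le \int_{\Om}\int_{\Om}|v(\bm x)-v(\bm y)|^{2/\gamma}/|\bm x-\bm y|^{d+2r}$. To exploit $v\in H^1(\Om)$ I would split the kernel $|\bm x-\bm y|^{-(d+2r)}=|\bm x-\bm y|^{-\mu/\gamma}\,|\bm x-\bm y|^{-(d+2r-\mu/\gamma)}$ with a free parameter $\mu$, attach $|v(\bm x)-v(\bm y)|^{2/\gamma}$ to the first kernel, and apply Hölder's inequality with exponents $\gamma$ and $\gamma'=\gamma/(\gamma-1)$. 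The first factor becomes $\big(\int_{\Om}\int_{\Om}|v(\bm x)-v(\bm y)|^{2}/|\bm x-\bm y|^{\mu}\big)^{1/\gamma}=[v]_{s}^{2/\gamma}$ with $s=(\mu-d)/2$, which is finite provided $s<1$ (i.e. $\mu<d+2$) thanks to the standard fractional embedding $H^1(\Om)\hookrightarrow H^{s}(\Om)$ on bounded Lipschitz domains (this is the role of Lemma~1.3 of \cite{sonner2012systems}). The second factor is $\big(\int_{\Om}\int_{\Om}|\bm x-\bm y|^{-\gamma'(d+2r-\mu/\gamma)}\big)^{1/\gamma'}$, which converges iff the exponent stays below $d$, and a short computation shows this is equivalent to $\mu>d+2r\gamma$.

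A parameter $\mu\in(d+2r\gamma,\,d+2)$ exists precisely when $2r\gamma<2$, i.e. $r<1/\gamma$, so this already yields $u\in H^r(\Om)$ for every $r<1/\gamma$. The main obstacle is the endpoint $r=1/\gamma$: there the two constraints collapse to $\mu=d+2$, i.e. $s=1$, and the crude bound fails, because $\int_{\Om}\int_{\Om}|v(\bm x)-v(\bm y)|^{2/\gamma}/|\bm x-\bm y|^{d+2/\gamma}$ diverges (even for smooth $v$ its integrand behaves like $|\bm x-\bm y|^{-d}$ near the diagonal). The reason is that the Hölder bound is wasteful away from the degeneracy set $\{v=0\}$, where $u=v^{1/\gamma}$ is in fact as regular as $v$.

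To close the endpoint I would combine two pointwise estimates — the Hölder bound $|u(\bm x)-u(\bm y)|\le|v(\bm x)-v(\bm y)|^{1/\gamma}$ in the regime where $\min\{v(\bm x),v(\bm y)\}$ is small, and the sharper bound $|u(\bm x)-u(\bm y)|\lesssim \min\{v(\bm x),v(\bm y)\}^{1/\gamma-1}\,|v(\bm x)-v(\bm y)|$ (from the monotonicity of the decreasing factor $t^{1/\gamma-1}$) in the regime where $v$ is bounded below — and split the double integral over $\Om\times\Om$ according to the size of $\min\{v(\bm x),v(\bm y)\}$. This localization, together with $v\in H^1(\Om)$, absorbs the borderline singularity and delivers $u\in H^{1/\gamma}(\Om)$. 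This refined step is precisely the content of Lemma~B.1 of \cite{sonner2012systems}, and it is the only genuinely delicate part; everything else is a direct seminorm estimate via the power inequality and Hölder's inequality.
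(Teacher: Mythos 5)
The first thing to note is that the paper does not actually prove this lemma: it is dispatched with a citation to Lemma~1.3 and Lemma~B.1 of \cite{sonner2012systems}. Your proposal is therefore more than the paper offers, and its self-contained part is correct. For the open range $r<1/\gamma$ your computation checks out: writing $v=u^{\gamma}$ (note you implicitly need $u\ge 0$, which is harmless since the lemma is applied to $M_\varepsilon\ge 0$), the pointwise bound $|a^{1/\gamma}-b^{1/\gamma}|\le |a-b|^{1/\gamma}$, the kernel splitting with free parameter $\mu$, and H\"older with exponents $(\gamma,\gamma')$ reduce the seminorm to $[v]_s^{2/\gamma}$ with $s=(\mu-d)/2<1$ (finite since $H^1(\Omega)\hookrightarrow H^s(\Omega)$ for $s<1$) times a kernel integral that converges iff $\gamma'(d+2r-\mu/\gamma)<d$, and your equivalence with $\mu>d+2r\gamma$ is correct; the window $(d+2r\gamma,\,d+2)$ is nonempty precisely when $r<1/\gamma$. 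It is worth observing that this open-range statement already suffices for the only place the paper invokes the lemma, namely Case~3 of the proof of \Cref{theo:fronts}: there $\delta>0$ is a free parameter and the claimed range $r<2/a$ is the union over $\delta>0$ of the open ranges $r<2/(a+\delta)$, so the endpoint $r=\gamma^{-1}$ is never actually needed. Your argument is also quantitative, $[u]_r^2\lesssim [v]_s^{2/\gamma}$, which is what the uniform-in-$\varepsilon$ bound in that proof requires.

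At the endpoint $r=1/\gamma$, however, your sketch as literally stated does not close, and you should present it as a citation rather than a proof. A fixed threshold $\delta$ on $m:=\min\{v(x),v(y)\}$ handles $\{m\ge\delta\}$, where $|u(x)-u(y)|\le \gamma^{-1}\delta^{1/\gamma-1}|v(x)-v(y)|$ pairs with the $H^{1/\gamma}$-kernel and is controlled by $[v]_{1/\gamma}<\infty$; but on $\{m<\delta\}$ the pure H\"older bound still produces a non-integrable singularity wherever $v$ is small yet vanishes at a nondegenerate rate. Concretely, for $v(x)=x_1^+$ and $x$ in the slab $\{0<x_1<\delta\}$ one gets $\int_{|x-y|<\rho} |x_1-y_1|^{2/\gamma}\,|x-y|^{-d-2/\gamma}\,\dd y \sim \int_0^\rho t^{-1}\,\dd t=\infty$, so the small-$m$ region diverges logarithmically at every such point even though $u=(x_1^+)^{1/\gamma}$ does belong to $H^{1/\gamma}$ there. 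The fix is to interpolate the two pointwise bounds at a scale depending on $|x-y|$, or to decompose dyadically in $m$ and sum the resulting geometric series -- the finer balancing that Lemma~B.1 of \cite{sonner2012systems} actually carries out. Since you explicitly attribute this step to B.1, your endpoint treatment is ultimately on the same footing as the paper's (a citation), but the sentence claiming that your two-regime localization ``absorbs the borderline singularity'' overstates what the described splitting achieves.
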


\begin{proof}[\textbf{Proof of \Cref{theo:fronts}}]
\textbf{Case 1 ($\underline{M}=\mathrm{ess}\inf \{M_0\}>0$):} In this case, taking $\e_1^{\frac{1}{\alpha}}<\underline{M}$ we conclude by \eqref{eq:DefPsi}
that
\[
\int_{\Om} \int_1^{M_0} \Psi_\e \quad \text{ is uniformly bounded for all } \e \leq \e_1.
\]
Hence, taking $\a=a$ in Lemma \ref{lemma:UniBoundW} provides a uniform bound on $\int_0^T \|\del M_\e\|^2$. Moreover, $\|M_\e\|_{L^\infty(0,T;L^\infty(\Om))}$ is bounded by \Cref{lemma:MaxRegSol}. Hence, $M_\e$ is uniformly bounded in $L^2(0,T;H^1(\Om))$. Passing to the limit $\e\to 0$, the convergence of $M_\e$ to a unique $M\in \W$ follows from \Cref{lemma:ExistUnRegSol} 
  (see \eqref{eq:convergencePhiM}). Consequently, the uniform bound implies that $M\in L^2(0,T;H^1(\Om))$.

\textbf{Case 2 ($a<2$):} In this case, put $\a=a$. Then, passing the limit $\e\to 0$ on the right hand side of \eqref{eq:Mineq} one has 
$$
\lim\limits_{\e\searrow 0}\int_{\Om}\int_1^{M_0} \Psi_\e\lesssim \int_{\Om}\int_{M_0}^1 (1-\vr^{1-a})\,\dd\vr\lesssim 1.
$$
Here we used that $M^{2-a}_0\leq 1$ a.e. in $\Om$ by assumption \ref{prop:IC}. Hence, we again obtain a uniform bound on $\int_0^T \|\del M_\e\|^2$ and consequently, $M\in L^2(0,T;H^1(\Om))$.

\textbf{Case 3 ($a\geq 2$):} We set $\a=2-\d$ for sufficiently small $\d>0$. Then the previous case and \eqref{eq:Mineq} gives that $(M_\e)^{1+ \frac{a-\a}{2}}\in L^2(0,T;H^1(\Om))$ and it is uniformly bounded. From \Cref{lemma:Hr} it follows that $M_\e\in L^2(0,T;H^r(\Om))$ for 
$$
r\leq \frac{2}{2+ a-\a}=\frac{2}{a+\d}\qquad \text{and sufficiently small } \d>0.
$$
This concludes the proof.
\end{proof}


 \subsection*{Acknowledgements}
 K. Mitra and S. Sonner would like to thank the Nederlandse Organisatie voor  Wetenschappelijk
Onderzoek (NWO) for their support through the Grant OCENW.KLEIN.358. K. Mitra was additionally
supported by Fonds voor Wetenschappelijk Onderzoek (FWO) through the Junior Postdoctoral Fellowship during the completion of this work.

\bibliographystyle{plain}

\end{document}